\documentclass[12pt]{amsart}
\usepackage[T1]{fontenc}
\usepackage[utf8]{inputenc}
\usepackage[width=155mm,top=20mm,bottom=20mm,headheight=15pt]{geometry}
\usepackage{amsmath,amssymb,amsthm}
\usepackage{graphicx}
\usepackage{tikz-cd}
\usepackage{multicol}
\usepackage[all]{xy}
\usepackage{xcolor}
\usepackage[rightcaption]{sidecap}
\usepackage{hyperref}
\usepackage{caption, subcaption} %figuras y subfiguras
\usepackage{array, tabularx, multirow} %herramientas para tablas

\newtheorem{teo}{Theorem}[section]

\newtheorem{lema}{Lemma}[section]

\newtheorem{prop}{Proposition}[section]
\newtheorem{proposition}[prop]{Proposition}
\newtheorem{cor}{Corollary}[section]
\newtheorem{corollary}[cor]{Corollary}
\newtheorem{defi}{Definition}[section]

\newtheorem{remark}{Remark}[section]
\newtheorem{conj}{Conjecture}[section]

\newenvironment{claim}[1]{\par\noindent\underline{Claim:}\space#1}{}
\newenvironment{claimproof}[1]{\par\noindent \textit{Proof of Claim:}\space#1}{\hfill $\square$}

\newcommand{\OO}{\mathcal{O}}

\newcommand{\Aab}{\mathcal{A}}
\newcommand{\grd}{\mathfrak{g}^r_d}
\newcommand{\mvl}{M_{V,L}}

\begin{document}

\title{An answer for a Mistretta-Stoppino's conjecture}

\author{Erick David Luna Núñez}

\address{Departamento de Matemáticas y Física, Universidad Autónoma de Aguascalientes
\newline  Av. Universidad \#940, Ciudad Universitaria \newline C.P. 20100, Aguascalientes, Ags. México.}

\email{erick.luna@edu.uaa.mx}

\date{}

\begin{abstract}
We study the relation between linear stability of generated linear series on smooth curves and slope stability of their associated syzygy bundles. Motivated by conjectures of Mistretta and Stoppino, we establish new cases in which linear stability implies slope stability, focusing first on generated linear series over general curves and then on curves lying on polarized K3 surfaces. In the case of general curves, we use Brill-Noether-theoretic arguments to relate the numerical conditions on the linear series to the semi-stability of the syzygy bundle. For curves on K3 surfaces, we combine Lazarsfeld-Mukai bundles with Bridgeland stability conditions and restriction techniques to obtain slope-stability results under explicit degree bounds. These results provide further evidence for the expected equivalence between linear stability of linear series and slope stability of syzygy bundles.
\end{abstract}

\maketitle

\section{Introduction}

Let $C$ be a smooth projective curve over the complex numbers and let $L$ be a globally generated line bundle of degree $d$ on $C$. Consider a subspace $V \subset H^0(L)$ of dimension $r+1$ that generates $L$. The pair $(L,V)$ is called a generated linear series of type $(d,r+1)$. Associated to this linear series is the syzygy bundle $\mvl$, defined as the kernel of the evaluation map $V \otimes \OO_C \xrightarrow{ev} L$, so that we have an exact sequence
\[
0 \rightarrow \mvl \rightarrow V \otimes \OO_C \xrightarrow{ev} L \rightarrow 0.
\]

The bundle $\mvl$ is also known as the syzygy bundle, kernel bundle, or dual span bundle (DSB). When $V=H^0(L)$, we denote $M_{H^0(L),L}$ by $M_L$. The slope stability of $\mvl$ is closely connected to the geometry of Brill-Noether loci and to the minimal resolution conjecture; see, for instance, \cite{farkas2003divisors}. In \cite{stoppino2008slope}, Stoppino extends Mumford's notion of linear stability for projective varieties \cite{mumford1977stability} to generated linear series on curves.

A central problem is to determine when linear semistability of $(L,V)$ implies slope semistability of the associated syzygy bundle $\mvl$. This question is addressed by two conjectures of Mistretta and Stoppino \cite{mistretta2012linear}:

\begin{conj}\label{conj1}
    Let $C$ be a smooth curve of genus $g$ and $(L,V)$ be a generated linear series of type $(d,r+1)$ over $C$ with $V \subsetneq H^0(L)$. If $d \leq kr$ where $k$ denotes the gonality of $C$, then linear (semi)stability of $(L,V)$ is equivalent to slope-(semi)stability of $\mvl$.
\end{conj}

\begin{conj}\label{conj2}
    For any curve $C$, and any line bundle $L$ on $C$, linear (semi)stability of $(L,H^0(L))$ is equivalent to slope-(semi)stability of $M_L$.
\end{conj}

Conjecture \ref{conj2} is known to hold for general and hyperelliptic curves by \cite{castorena2018linear}. Our main goal is to provide new families of curves for which Conjecture \ref{conj1} is valid. Using techniques from \cite{castorena2018linear}, together with the fact that for a general curve the inequality $d \leq g+r$ implies $d \leq kr$, we obtain the following statement.

\begin{corollary}\label{TeoA}
Let $(L,V)$ be a generated linear series of type $(d,r+1)$ over a general curve $C$ of genus $g \geq 2$ with $codim_{H^0(L)}(V) \leq h^1(L)$. Then linear (semi)stability of $(L,V)$ is equivalent to the slope-(semi)stability of $\mvl$.
\end{corollary}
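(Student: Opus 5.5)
The plan is to reduce the statement to the numerical range in which Conjecture \ref{conj1} is already known, and then invoke the techniques of \cite{castorena2018linear}. The reduction has two inputs.

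\textbf{Step 1: translating the codimension hypothesis into $d\le g+r$.} Write $h^0(L)=r+1+c$, where $c=\mathrm{codim}_{H^0(L)}(V)$. Riemann--Roch gives $h^0(L)=d-g+1+h^1(L)$, hence
\[
d = g + r + c - h^1(L).
\]
The hypothesis $c\le h^1(L)$ is therefore exactly the inequality $d\le g+r$. For a general curve of genus $g$, the gonality is $k=\lfloor (g+3)/2\rfloor$. By the fact quoted in the introduction, $d\le g+r$ then implies $d\le kr$. I would still check this directly in the relevant range:
\begin{itemize}
\item If $r=1$, then $d\le kr$ is automatic, since $d\ge k$ for any base point free pencil.
\item If $r\ge 2$, then $g+r\le kr$ amounts to $g\le r(k-1)$, which holds because $k-1\ge (g+1)/2$.
\end{itemize}
The degenerate cases $h^1(L)=0$ (where $V=H^0(L)$ and $d\le g+r$ is automatic) must also be absorbed into the same framework.

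\textbf{Step 2: the two implications.} One direction, slope (semi)stability of $\mvl$ implies linear (semi)stability of $(L,V)$, holds in general. I would prove it by testing slope stability against the subbundles $M_{W,L'}\subset \mvl$ coming from projections $V\to W$ with base locus removed. This is the standard argument of Mistretta--Stoppino, and I would cite it.

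For the converse, suppose $(L,V)$ is linearly (semi)stable and let $F\subset \mvl$ be a destabilizing subbundle. Following \cite{castorena2018linear}, I would dualize, so that $F^*$ is a quotient of $\mvl^*$ and hence generated by $V^*$. I would then pass to its determinant, a generated line bundle $N$ of degree $\deg F^*$ with a subspace of sections of dimension at least $\mathrm{rk}F+1$. This yields a linear series of type $(\deg F^*, \ge \mathrm{rk}F+1)$.

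\textbf{Main obstacle.} The hardest step is comparing this series with linear subseries of $(L,V)$. Linear stability controls only the quotients $\mathrm{Sym}$-type projections, and when $\deg F^*$ is small one must show that $N$ is actually of the form $L(-D)$ for an effective divisor $D$ with $F\cong M_{W,L(-D)}$. Here I would use Brill--Noether theory on the general curve: the condition $\rho\ge 0$ bounds $h^0(N)$ in terms of $\deg N$. Combined with $d\le kr$, and with the bound $\mu(F)\ge \mu(\mvl)=-d/r$, this should force $\deg N\le kr$-type inequalities. Clifford-type estimates (the ones used in Castorena--Mistretta--Torres for $V=H^0(L)$) then show that $F$ is a kernel bundle of a subseries of $V$. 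At that point linear (semi)stability directly contradicts the destabilization. The delicate part is making sure the argument of \cite{castorena2018linear}, written for complete series, carries over verbatim once $d\le kr$ is known. Where it used completeness, I would replace it with $c\le h^1(L)$.
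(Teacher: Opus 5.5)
There is a genuine gap. Your Step~2 stops exactly where the proof has to happen: ``this should force $\deg N\le kr$-type inequalities'' and ``Clifford-type estimates then show that $F$ is a kernel bundle'' are assertions, not arguments. The route you sketch is also not the one that works.

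Step~1 buys nothing. $d\le kr$ is just the hypothesis of Conjecture~\ref{conj1}, which is only conjectural (the paper recalls a counterexample of Castorena--Mistretta--Torres-L\'opez on a plane septic). So the generality of $C$ must enter through Brill--Noether theory, not through gonality. Your check is also flawed in two places:
\begin{itemize}
\item For $r=1$, the fact $d\ge k$ shows that $d\le k$ is \emph{not} automatic. A general complete $g^1_{g+1}$ has $c=h^1(L)=0$ and $d>k$. That case is trivial only because $\mvl$ is a line bundle.
\item The bound $k-1\ge (g+1)/2$ fails for even $g$, although $k-1\ge g/2$ suffices when $r\ge 2$.
\end{itemize}

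The missing idea is that the Brill--Noether input you already set up closes the argument by a direct slope comparison. There is no need to recognise destabilising subbundles as kernel bundles.
\begin{itemize}
\item For $S\subset\mvl$ of rank $s$, $h^0(S)=0$ gives $h^0(\det S^\vee)\ge s+1$.
\item Then $\rho(g,s,\deg S^\vee)\ge 0$ gives $\deg S^\vee\ge s(g+s+1)/(s+1)$, i.e.\ $\mu(S)\le -1-\frac{g}{s+1}$.
\item Riemann--Roch turns $c\le h^1(L)$ into $\mu(\mvl)=-1+\frac{h^1(L)-c-g}{r}\ge -1-\frac{g}{r}$.
\end{itemize}
Since $s+1\le r$, every proper subbundle has $\mu(S)\le\mu(\mvl)$. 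So $\mvl$ is semistable unconditionally (Proposition~\ref{Prop2}), and equality forces $s=r-1$, $c=h^1(L)$, $d=g+r$ and $r\mid g$.

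What remains is the equality case, and your proposal gives no argument for it. An equal-slope $S$ must be shown to be $M_{V(-Z),L(-Z)}$, so that linear stability can exclude it. The paper does this through the Butler diagram $0\to S\to W\otimes\OO_C\to F_S\to 0$:
\begin{itemize}
\item Brill--Noether again gives $h^0(\det F_S)=r$, since $\rho(g,r,\deg F_S)=-r-1-g/r<0$ (Proposition~\ref{Prop3}).
\item If $\mathrm{rk}\,F_S>1$, then $W=V$, and the Mistretta--Stoppino sequence $0\to\OO_C\to F_S\to\det F_S\to 0$ yields $W=H^0(F_S)$ (Proposition~\ref{proph0}).
\item The Castorena--Torres-L\'opez argument then shows $F_S$ is a line bundle $L(-Z)$, with $W=V(-Z)$ and $\deg Z=1+g/r$ (Proposition~\ref{condest}).
\end{itemize}
Clifford-type estimates and the gonality bound play no role. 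This chain is what has to replace the last part of your Step~2.
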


We also study curves lying on principally polarized K3 surfaces. More precisely, if $(X,H)$ is a principally polarized K3 surface and $C \in |H|$, we analyze the Lazarsfeld-Mukai bundle $F_{V,L}$ associated to $(L,V)$. By studying stability conditions on $X$, we prove that $F_{V,L}$ is $H$-Gieseker stable and, under an additional degree bound, that its restriction to $C$ is slope-stable. This leads to the following result.

\begin{proposition}\label{intro:equivk3surf}
    Let $(X,H)$ be a polarized K3 surface with the property that $H^2$ divides $H.D$ for any curve classes $D$ on $X$. Take $C \in |H|$ a curve of genus $g>2$, let $(L,V)$ be a generated linear series of type $(d,r+1)$ over $C$ with $1<r<d \leq \min\{g-1,kr\}$, where $k$ is the gonality of $C$. If $(L,V)$ is linearly stable, then $\mvl$ is slope-stable.
\end{proposition}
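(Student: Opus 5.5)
The plan is to realize $\mvl$ as the restriction to $C$ of a Lazarsfeld--Mukai type bundle on $X$, prove that bundle is stable on $X$, and then transfer stability to $C$ with an effective restriction theorem. Linear stability will only be used at the end, to rule out the destabilizing subsheaves that restriction cannot control.

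\emph{Construction.} View $L$ as a sheaf on $X$ via $i\colon C\hookrightarrow X$ and let $F_{V,L}$ be the kernel of the surjection $V\otimes \OO_X \to i_*L$. Since $i_*L$ has homological dimension one, $F_{V,L}$ is locally free of rank $r+1$ with $c_1(F_{V,L})=-H$ and $c_2=d$. Restricting to $C$ and using $\mathcal{T}or_1(i_*L,\OO_C)\cong L\otimes \OO_C(-C)$ gives
\[
0\to L(-H|_C)\to F_{V,L}|_C \to \mvl \to 0 .
\]
Thus slope-stability of $\mvl$ reduces to controlling subsheaves of $F_{V,L}|_C$ together with the quotient $\mvl$.

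\emph{Gieseker stability of $F_{V,L}$.} The divisibility hypothesis says every curve class $D$ satisfies $H^2\mid H.D$. So for any saturated subsheaf $G\subset F_{V,L}$ of rank $s\le r$, the number $c_1(G).H$ is a multiple of $H^2$. Moreover $h^0(F_{V,L})=0$, because $V\to H^0(L)$ is injective. I expect this forces $c_1(G).H\le -H^2$, so that $\mu_H(G)\le -H^2/s<-H^2/(r+1)=\mu_H(F_{V,L})$, giving $\mu_H$-stability and hence Gieseker stability. The excluded case $c_1(G).H\ge 0$ should be ruled out by a Bogomolov-type bound or by a wall-crossing argument in Bridgeland stability along the $(\alpha,\beta)$-plane. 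This is the route the abstract indicates: there are no walls for the Mukai vector $v(F_{V,L})$ beyond the large volume limit, thanks to the divisibility of $H.D$ and $d\le g-1$.

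\emph{Restriction to $C$ (main obstacle).} Take a subbundle $E\subset\mvl$ with $\mu(E)\ge\mu(\mvl)=-d/r$. Its preimage $E'\subset F_{V,L}|_C$ is an elementary transformation of a restriction. I would compare $E$ with $F_{V,L}$ via Feyzbakhsh's effective restriction theorem, which uses Bridgeland stability and the bound $d\le g-1$, i.e.\ $c_2$ small relative to $H^2=2g-2$. This step is the hard part: the standard restriction theorems (Flenner, Bogomolov) need $H^2$ large relative to the discriminant, and here the bound must come from $d\le g-1$.

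If the restriction argument fails to handle a destabilizing $E$ directly, I would use the fact that a maximal destabilizing subbundle of a syzygy bundle can be taken of the form $M_{W,L'}$, for a subsheaf $L'\subset L$ generated by $W\subset V$. This uses $d\le kr$ and the Butler/Mistretta--Stoppino reduction. Linear stability gives $\deg L'/(\dim W-1)>d/r$, which contradicts $\mu(M_{W,L'})\ge -d/r$. In this sense the $K3$ input would handle precisely the non-syzygy subbundles, and this is where I expect the bulk of the difficulty.
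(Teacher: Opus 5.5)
Your setup matches the paper's. You use the Lazarsfeld--Mukai bundle $F_{V,L}$, the sequence $0\to L\otimes K_C^{-1}\to F_{V,L}|_C\to M_{V,L}\to 0$, and slope-stability of $F_{V,L}|_C$ from Feyzbakhsh's Proposition 4.6 with $m=1$. The paper actually carries out that restriction check by computing $\tilde\Delta(F_{V,L})=\frac{1}{r+1}\bigl(\frac{d}{g-1}-\frac{r}{r+1}\bigr)$; this is where $d\le g-1$ and $r\ge 2$ enter, and you leave it undone. Your $\mu$-stability of $F_{V,L}$ needs no Bogomolov or wall-crossing input. Since $G\subset V\otimes\mathcal{O}_X$, you get $c_1(G).H\le 0$. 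Equality would make the saturation of $G$ in $V\otimes\mathcal{O}_X$ equal to $W\otimes\mathcal{O}_X$ with $0\ne W\subset V$, which is impossible because $W\to H^0(L)$ is injective. Divisibility then gives $c_1(G).H\le -H^2$.

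\textbf{The gap is the last step.} You take as a ``fact'' that for $d\le kr$ a maximal destabilizing subbundle of $M_{V,L}$ can be taken of the form $M_{W,L'}$. No such theorem exists. Butler's diagram attaches to a destabilizing $S$ a subspace $W\subset V$ and a quotient $F_S$ of $W\otimes\mathcal{O}_C$ with $S=\ker(W\otimes\mathcal{O}_C\to F_S)$, and $S$ is a syzygy bundle $M_{W,L'}$ only when $\operatorname{rk}F_S=1$. Mistretta--Stoppino's lemma (Lemma~\ref{lemamist}) handles $\operatorname{rk}F_S\ge 2$ only when $F_S$ is an extension of $\det F_S$ by a trivial bundle, exact on global sections. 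Without that hypothesis your reduction is just Conjecture~\ref{conj1} restated. That conjecture fails in general: the paper cites the degree-$7$ plane curve counterexample of Castorena--Mistretta--Torres-L\'opez. So your argument is circular exactly where the K3 geometry was supposed to do the work.

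\textbf{The K3 input, as you set it up, does not fill this.} Let $E\subset M_{V,L}$ have rank $s<r$ and slope $-d/r$. Its preimage $E'\subset F_{V,L}|_C$ satisfies $\mu(E')<\mu(F_{V,L}|_C)=-\frac{2g-2}{r+1}$ if and only if $d<\frac{r}{r+1}(2g-2)$, and this holds automatically when $d\le g-1$ and $r\ge 2$. Quotients of $M_{V,L}$ are quotients of $F_{V,L}|_C$, but that only bounds their slopes below by $-\frac{2g-2}{r+1}$, which is weaker than $-d/r$. So stability of $F_{V,L}|_C$ alone never contradicts a destabilizing subbundle, and you need a further idea.

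\textbf{The paper's route is different and never uses linear stability or $d\le kr$.} For a stable maximal destabilizing $S$, it gets $\operatorname{Hom}(S,F_{V,L}|_C)=0$ from $\mu(S)\ge -d/r>\mu(F_{V,L}|_C)$. It then looks for a nonzero element of $H^0(S'\otimes M_{V,L}^\vee)$, which injects into $H^0(S^\vee\otimes F_{V,L}|_C)$, via a Koszul-cohomology dimension count. You cannot simply import that count. The required inequality $(r+1)\chi_1<\chi_2$ simplifies to $r(d+d_S-2g+2)+(s+1)d-r(s+1)(g-1)>0$. Its left side is negative whenever $d\le g-1$, $r\ge 2$ and $d_S\le 0$, and $d_S\le 0$ always holds since $S\subset V\otimes\mathcal{O}_C$. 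The paper's chain of lower bounds flips the sign of $r^2$ in its last step, so it does not supply the missing step either.
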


\section{Determinant bundles} 

In this section, we present similar results to those given by Castorena and Torres-Lopez in \cite{castorena2018linear} emphasizing the diferences with our case. As a first approach to understand the Conjecture \ref{conj1}, we have the following result of \cite{mistretta2012linear} giving conditions for the bundle $F_S$ and for its determinant bundle $det(F_S)$. 

\begin{lema}[ \text{\cite[Lemma 4.3]{mistretta2012linear}} ] \label{lemamist}
Let $C$ be a $k$-gonal curve. With the notation of Butler's diagram of the pair $(L,V)$ by $S$, suppose that $rank(F_S) \geq 2$. If $F_S$ fits in an exact sequence $$0 \rightarrow \bigoplus^{rank(F_S)-1} \OO_C \rightarrow F_S \rightarrow det(F_S) \rightarrow 0$$ which is also exact on global sections, then the following properties hold: 
\begin{enumerate}
    \item If $deg(L)\leq k (dim(V)-1)$ where $k=gon(C)$, then $\mu (S) \leq \mu (\mvl)$. Furthermore, we have equality if and only if
    \begin{itemize}
        \item $W=H^0 (F_S)$
        \item $k = \frac{deg(det(F_S))}{h^0(det(F_S))-1}$
        \item $k = \frac{deg(L)}{dim(V)-1}$
    \end{itemize}
    \item If $deg(L) < k (dim(V)-1)$ where $k=gon(C)$, then $\mu (S) < \mu (\mvl)$
\end{enumerate}
\end{lema}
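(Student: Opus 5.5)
The plan is to reduce both inequalities on slopes to one numerical inequality for the line bundle $\det(F_S)$, and then bound its degree using the gonality. I use the standard set-up of Butler's diagram. For a subbundle $S \subset \mvl$, let $W \subset V$ be the smallest subspace with $S \subset W \otimes \OO_C$. Then $F_S = (W\otimes\OO_C)/S$ is a globally generated bundle with $h^0(F_S^*)=0$, and it sits in $0 \to S \to W\otimes \OO_C \to F_S \to 0$. Put $n := \dim W - rank(F_S) = rank(S) \ge 1$. Then
\[
\mu(S) = -\frac{deg(F_S)}{n}, \qquad \mu(\mvl) = -\frac{deg(L)}{dim(V)-1} = -\frac{d}{r},
\]
so $\mu(S)\le \mu(\mvl)$ is equivalent to $deg(F_S)/n \ge d/r$.

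First I would use the hypothesis on $F_S$. Since the sequence $0\to \OO_C^{\oplus rank(F_S)-1}\to F_S \to det(F_S)\to 0$ is exact on global sections, we get $deg(F_S)=deg(det F_S)$ and
\[
h^0(det F_S) = h^0(F_S) - rank(F_S) + 1 \ge \dim W - rank(F_S) + 1 = n+1,
\]
with equality exactly when $W = H^0(F_S)$. Moreover, $det(F_S)$ is globally generated as a quotient of a generated bundle. It is nontrivial, since otherwise $F_S$ would be an extension of trivial bundles with $h^0(F_S^*)\neq 0$. So $D:=det(F_S)$ is a generated line bundle with $h^0(D)-1 =: m \ge n \ge 1$.

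The key step is the gonality estimate
\[
\frac{deg(D)}{h^0(D)-1} \ \ge\ k,
\]
valid for the line bundles $D$ that can occur here. Granting it, the result follows directly: $deg(F_S)/n \ge deg(D)/m \ge k \ge d/r$ when $d\le kr$, and the inequality is strict as soon as $d<kr$. This proves (2). In the equality case of (1), every inequality in the chain must be an equality, which gives $n=m$ (that is, $W=H^0(F_S)$), $k=deg(D)/(h^0(D)-1)$ and $k=d/r$. Conversely, these three conditions clearly force equality.

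I expect the gonality estimate to be the main obstacle, because it is false for arbitrary line bundles; a canonical bundle of a plane quintic already violates it. My first attempt would be to avoid proving it in general and argue by contradiction. If $deg(D)/m < d/r \le k$, then $D$ is a line bundle of small degree with many sections. I would subtract $m-1$ general points to obtain a pencil, and also compare with $L$ via the inclusion $W\subset V\subset H^0(L)$, so as to bound $deg(D)\le d$ and place $D$ in the range where the Clifford/gonality-type bound $deg(D)\ge k(h^0(D)-1)$ holds. If this direct comparison is not sharp enough, I would follow Mistretta–Stoppino's original argument, which exploits the specific shape of $F_S$ given by the hypothesis.
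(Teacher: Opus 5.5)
Your reduction is correct and natural. With $n=\operatorname{rk}S$ and $D=\det F_S$, exactness on global sections and $W\hookrightarrow H^0(F_S)$ (since $H^0(S)\subset H^0(\mvl)=0$) give $h^0(D)-1\ge n$, with equality iff $W=H^0(F_S)$. Both parts of the lemma, including the three equality conditions, then follow from the chain $\deg F_S/n\ge \deg D/(h^0(D)-1)\ge k\ge d/r$. The paper gives no proof of this lemma; it only quotes Mistretta--Stoppino.

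There is a genuine gap: the middle inequality $\deg D\ge k\,(h^0(D)-1)$ is the real content of the statement, and you do not prove it. The route you sketch cannot work:
\begin{itemize}
\item Removing $m-1$ general points only gives $\deg D\ge k+m-1$, which is weaker than $km$ once $m\ge 2$.
\item Bounding $\deg D\le d$ via $L$ does not help. The inequality already fails for $\OO_C(1)$ on a smooth plane curve of degree $e\ge 4$, which has degree $e=k+1$ and $h^0=3$. So no upper bound on $\deg D$, and no argument using only the numerical data of $D$, can force it.
\item An argument by contradiction from $\deg D/m<d/r$ would not give the equality characterization in (1), which needs $\deg D/m\ge k$ unconditionally.
\end{itemize}

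The missing idea is that the hypothesis must be used qualitatively, not just to count sections. One way to do this:
\begin{itemize}
\item Write the class of $0\to\OO_C^{\oplus(f-1)}\to F_S\to D\to 0$ as $(e_1,\dots,e_{f-1})\in H^1(D^{-1})^{\oplus(f-1)}$. Since $h^0(F_S^\vee)=0$, the $e_i$ are linearly independent, so $e_1\neq 0$; this is where $f=\operatorname{rk}F_S\ge 2$ enters.
\item Exactness on global sections means the coboundary vanishes, i.e.\ $s\cdot e_1=0$ in $H^1(\OO_C)$ for every $s\in H^0(D)$.
\item By Serre duality, $H^0(D)\otimes H^0(K_C)\to H^0(K_C\otimes D)$ is therefore not surjective. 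Equivalently, $K_{a-1,1}(C,D)\cong\ker\bigl(H^1(D^{-1})\to H^0(D)^\vee\otimes H^1(\OO_C)\bigr)$ is nonzero, where $a=h^0(D)-1$.
\item For $a\ge 2$, apply Green's $K_{p,1}$ theorem to the nondegenerate irreducible curve $\phi_D(C)\subset\mathbb{P}^a$; note $K_{a-1,1}$ depends only on the quadrics containing it. This forces $\phi_D(C)$ to be a rational normal curve of degree $a$.
\item Hence $D\cong\varphi^*\OO_{\mathbb{P}^1}(a)$ for a morphism $\varphi\colon C\to\mathbb{P}^1$, so $\deg D=a\deg\varphi\ge ka$, with equality iff $\deg\varphi=k$. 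For $a=1$ this is just the definition of gonality.
\end{itemize}
This is exactly what excludes the plane-curve example. There $H^0(\OO_C(1))\otimes H^0(K_C)\to H^0(K_C(1))$ is surjective, so no non-split extension of $\OO_C(1)$ by trivial bundles is exact on global sections. With this step supplied, the rest of your argument goes through.
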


By dualizing the first exact row in Butler's diagram of the pair $(L,V)$ by $S$ ($0 \rightarrow S \rightarrow W \otimes \OO_C \rightarrow F_S \rightarrow 0$) and twisting by $K_C$, we get a map at sections
$$m_W : W^\vee \otimes H^0 (K_C) \rightarrow H^0 (S^\vee \otimes K_C).$$ In contrast to \cite{castorena2018linear} we consider the incomplete case, that is $(L,V)$ a generated linear series with $V \in Gr(r+1,H^0 (L))$ generating space with $r+1<h^0 (L)$ and the Butler's diagram of the pair $(L,V)$ by $S$. 

\begin{prop}
Let $Q = \mvl/S$ 
\begin{itemize}
    \item[i)] If the multiplication map $$m_W : W^\vee \otimes H^0 (K_C) \rightarrow H^0 (S^\vee \otimes K_C)$$ is surjective, then $H^0 (Q)=0$.
    \item[ii)] If $m_W$ is surjective, then $W=H^0 (F_S)$.
    \item[iii)] If $S\subset \mvl$ is stable of maximal slope, then $H^0 (Q)=0$.
\end{itemize}
\label{Prop1}
\end{prop}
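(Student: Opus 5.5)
The plan is to work with the two rows of Butler's diagram: the row $0\to S\to W\otimes\OO_C\to F_S\to 0$ and the row $0\to \mvl\to V\otimes\OO_C\to L\to 0$. Here $W\subset V$ is the subspace for which $W^\vee$ is the image of $V^\vee\to H^0(S^\vee)$. Two facts are used throughout: $V^\vee\to W^\vee$ is surjective, and $H^0(\mvl)=0$ (because $V\to H^0(L)$ is injective), hence $H^0(S)=0$.

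For i), I would take cohomology of $0\to S\to \mvl\to Q\to 0$. Since $H^0(\mvl)=0$, the space $H^0(Q)$ is the kernel of $H^1(S)\to H^1(\mvl)$. By Serre duality, $H^0(Q)=0$ is therefore equivalent to the surjectivity of the restriction
\[ r\colon H^0(\mvl^\vee\otimes K_C)\to H^0(S^\vee\otimes K_C). \]
Next I would compose $r$ with the map $V^\vee\otimes H^0(K_C)\to H^0(\mvl^\vee\otimes K_C)$ coming from the dual of the second row. The inclusion $S\subset \mvl\subset V\otimes\OO_C$ factors through $W\otimes\OO_C$, so this composite equals $m_W\circ(\pi\otimes \mathrm{id})$, where $\pi\colon V^\vee\to W^\vee$ is surjective. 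If $m_W$ is surjective, the composite is surjective, hence $r$ is surjective, and so $H^0(Q)=0$.

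For ii), I would use the dual of the first row twisted by $K_C$:
\[ 0\to F_S^\vee\otimes K_C\to W^\vee\otimes K_C\to S^\vee\otimes K_C\to 0. \]
Surjectivity of $m_W$ on global sections means the connecting map $H^0(S^\vee\otimes K_C)\to H^1(F_S^\vee\otimes K_C)$ is zero. Hence $H^1(F_S^\vee\otimes K_C)\to W^\vee\otimes H^1(K_C)$ is injective, and by Serre duality $W\to H^0(F_S)$ is surjective. On the other hand, $W\to H^0(F_S)$ is injective because $H^0(S)=0$. Therefore $W=H^0(F_S)$.

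For iii), I would argue by contradiction. Suppose $H^0(Q)\neq 0$. A nonzero section $\OO_C\to Q$ has saturated image a line subbundle $L'\subset Q$ with $\deg L'\ge 0$. Its preimage $S'\subset \mvl$ has rank $s+1$, where $s=\mathrm{rk}\,S$, and $\deg S'=\deg S+\deg L'$. The key input is $\deg S<0$. Indeed, $S^\vee$ is globally generated by $W^\vee$, so $\deg S\le 0$, with equality only if $S$ is trivial; that is impossible because $H^0(S)=0$. It then follows that
\[ \mu(S')\ge \frac{\deg S}{s+1}>\frac{\deg S}{s}=\mu(S), \]
which contradicts the maximality of $\mu(S)$.

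The main obstacle I expect is bookkeeping: checking that the composite map in i) really factors as $m_W\circ(\pi\otimes\mathrm{id})$, which depends on the identification of $W$ in Butler's diagram. The rest is duality.
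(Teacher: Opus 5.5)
Your proof is correct. Parts i) and ii) follow the paper's argument. The paper dualizes Butler's diagram, twists by $K_C$, and uses $m_W\circ p_1=p_2\circ m_1$ to get $p_2$ surjective, so that $H^0(Q)^\vee\cong H^1(Q^\vee\otimes K_C)$ injects into $H^0(\mvl)^\vee=0$. It then uses the vanishing of the connecting map to get $H^0(F_S)^\vee\cong W^\vee$. You run the same exact sequences on the undualized side. Your injectivity of $W\to H^0(F_S)$ from $H^0(S)=0$ is the dual of the paper's surjectivity of $B$, which uses $H^1(S^\vee\otimes K_C)=0$.

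Part iii) is where your route differs. The paper only sketches it by reference to \cite[Theorem 1.1]{castorena2018linear}, splitting into two cases:
\begin{itemize}
\item $\mvl$ semistable, so $\mu(S)=\mu(\mvl)$ and $Q$ is semistable of negative slope;
\item $\mvl$ unstable, so the maximal Harder--Narasimhan slope of $Q$ is negative.
\end{itemize}
Your argument is uniform and self-contained. A section of $Q$ saturates to a line subbundle of degree $\ge 0$, and its preimage has slope $>\mu(S)$. The key input $\deg S<0$ you prove directly from global generation of $S^\vee$ and $H^0(S)=0$. Your argument also shows that stability of $S$ plays no role; only $\mu(S)=\mu_{\max}(\mvl)$ matters. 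Applied to an arbitrary subsheaf of $Q$ instead of the one generated by a section, the same computation recovers $\mu_{\max}(Q)\le\mu(S)<0$, which is what the paper's sketch relies on.

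One point to state explicitly: ``maximal slope'' must mean maximal among all nonzero subsheaves of $\mvl$, including $\mvl$ itself. When $\operatorname{rk}Q=1$ your $S'$ is $\mvl$, and otherwise there is no contradiction. This is also the reading implicit in the paper's sketch.
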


\begin{proof}  
\begin{itemize}
    \item[i)] We follow \cite{castorena2018linear}. By dualizing Butler's diagram of $(L,V)$ by $S$ as shown below, twisting by $K_C$ and taking cohomology,
    \[
    \resizebox{\linewidth}{!}{$
    \xymatrix{
    H^0 (L^\vee \otimes K_C) \ar[r] & V^\vee \otimes H^0(K_C) \ar[r]^{m_1} \ar[d]^{p_1} & H^0(\mvl^\vee \otimes K_C) \ar[d]^{p_2} \ar[r]^{A} & H^1(L^{\vee} \otimes K_{C}) \ar[r]^{D} & V^\vee \otimes H^{1} (K_{C}) \ar[r] & 0 \\
    H^{0} (F_{S}^{\vee} \otimes K_{C}) \ar[r] & W^\vee \otimes H^{0} (K_{C}) \ar[r]^{m_W} & H^0(S^{\vee} \otimes K_{C}) \ar[r]^{E} \ar[d]^{a} & H^1(F_{S}^{\vee} \otimes K_{C}) \ar[r]^{B} & W^{\vee} \otimes H^{1} (K_{C}) \ar[r] & 0 \\
    & & H^1(Q^{\vee} \otimes K_{C}) \ar[d]^{b} & & & \\
    & & H^1(\mvl^{\vee} \otimes K_{C}) & & &
    }
    $}
    \]

    Since $ m_W $ is surjective and $W \hookrightarrow V $, it follows that $ p_1 $ is also surjective, and $ m_W \circ p_1 $ is surjective as well. By commutativity of the diagram $ m_W \circ p_1 = p_2 \circ m_1 $ is surjective and this implies that $ p_2 $ is also surjective, which is equivalent to stating that the morphism $ a $ is equal to zero.

    On the other hand, by Serre duality,    
    $$ H^1(Q^\vee \otimes K_C) \cong H^0(Q)^\vee \quad \text{ and } \quad H^1(\mvl^\vee \otimes K_C) \cong H^0(\mvl)^\vee. $$ 
    Furthermore, since  $H^0(\mvl) \cong 0 $ then $ a \equiv 0 $ and $ b $ is an isomorphism. We conclude that $ H^0(Q)^\vee \cong 0 $, equivalently $ H^0(Q) = 0 $.
    
    \item[ii)] If $ m_W $ is surjective then the map $ E $ vanishes and $ B $ is an isomorphism. By Serre duality $H^1(F_S^\vee \otimes K_C) \cong H^0(F_S)^\vee. $     Thus $ H^0(F_S)^\vee \cong W^\vee \otimes H^1(K_C),$ leading to the isomorphisms,
        \begin{equation*}
        H^0(F_S)^\vee \cong W^\vee \otimes H^1(K_C) \cong W^\vee. 
    \end{equation*}
      By dualizing we obtain $ H^0(F_S) \cong W $.
    
    \item[iii)] This proof is analogous to \cite[Theorem 1.1]{castorena2018linear} which is based on the study of $ Q $ when $\mvl$ is slope-semistable (for which $ Q $ is slope-semistable of negative degree) or when $\mvl$ is slope-unstable (in this case, the maximal slope of the Harder-Narasimhan filtration for $ Q $ is negative, leading to $ H^0(Q) = 0 $).
    \end{itemize}
\end{proof}

\begin{remark} \label{obs1}
    In Theorem 1.1 of \cite{castorena2018linear} the case \textit{i)} is an \textbf{if and only if} result. However, in the non-complete case as in Proposition \ref{Prop1}, the proof of $H^0 (Q)=0$ implies that the map $m_W$ is surjective is non true in general. This is due to the fact that the morphism $D: H^1 (L^\vee \otimes K_C) \rightarrow V^\vee \otimes H^1 (K_C)$ (following the notation of the proof) is not an isomorphism since $r+1<h^0 (L)$. Consequently, when $V \subsetneq H^0 (L)$ we cannot assert that the morphisms $m_1$ and $p_2$ are surjective as the authors do in \cite{castorena2018linear}. This indicates that the results established for the complete case cannot be directly extended to the incomplete case.
\end{remark}

Using this construction, for a general curve we can give a proof of the slope-semistability of $\mvl$ and conditions for the strictly slope-semistability of $\mvl$.

\begin{prop} \label{Prop2}
Let $C$ be a general curve of genus $g \geq 2$. Let $(L,V)$ be a generated linear series of type $(d,r+1)$ on $C$, and consider $c \colon =codim_{H^0 (L)} V \leq h^1(L)$. Then $\mvl$ is slope-semistable. Moreover, if there exists a proper subbundle $S \subset \mvl$ with $\mu (S) = \mu (\mvl)$, then
\begin{itemize}
    \item $h^1 (L)-c=0$.
    \item $s:=rank(S)=r-1$.
    \item $d=g+r$ with $r|g$.
\end{itemize}
\end{prop}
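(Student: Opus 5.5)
The plan is the standard Butler-diagram argument, with the Brill--Noether inequality on the general curve $C$ as the numerical engine. Arguing by contradiction, suppose $\mvl$ is not slope-stable. Choose a proper subbundle $S\subset \mvl$ with $\mu(S)\geq \mu(\mvl)$, of maximal slope and, among those, stable. Set $s=rank(S)$, so $1\leq s\leq r-1$. Form Butler's diagram of $(L,V)$ by $S$:
\[
0\rightarrow S\rightarrow W\otimes\OO_C\rightarrow F_S\rightarrow 0 ,\qquad W\subset V .
\]
Here $F_S$ is globally generated by $W$, has rank $s':=\dim W - s$ and degree $e:=-\deg S>0$. Since $H^0(\mvl)=0$, also $W^\vee \hookrightarrow H^0(F_S^\vee)$ fails, so $F_S$ has no trivial summand.

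The condition $\mu(S)\ge\mu(\mvl)$ translates into an upper bound
\[
\frac{e}{s}\leq \frac{d}{r}.
\]
The hypothesis on $c$ gives a bound on $d$:
\[
r+1=h^0(L)-c\geq h^0(L)-h^1(L)=d-g+1, \quad\text{so}\quad d\leq g+r .
\]
Hence
\[
\frac{e}{s}\leq \frac{g}{r}+1 .
\]

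For the lower bound, I will take $s'$ general sections of $W$. These give the exact sequence $0\rightarrow \OO_C^{s'-1}\rightarrow F_S\rightarrow \det F_S\rightarrow 0$ appearing in Lemma \ref{lemamist}. From it I want $h^0(\det F_S)\geq s'+1$ and, arranging $\dim W\ge s+1$, the bound $h^0(\det F_S)\ge s+1$. The line bundle $\det F_S$ then has degree $e$ and at least $s+1$ sections, so it defines a $\mathfrak{g}^{s}_{e}$ on $C$. Because $C$ is Brill--Noether general, $\rho(g,s,e)=g-(s+1)(g-e+s)\geq 0$, which rearranges to
\[
e\geq \frac{gs}{s+1}+s,\quad\text{i.e.}\quad \frac{e}{s}\geq \frac{g}{s+1}+1\geq \frac{g}{r}+1 .
\]
The last inequality uses $s+1\leq r$.

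Combining the two bounds, every inequality in the chain must be an equality. This gives $\mu(S)=\mu(\mvl)$, so $\mvl$ is slope-semistable. In the equality case it also gives three conditions. First, $d=g+r$, hence $h^1(L)-c=0$. Second, $s+1=r$. Third, $\rho(g,r-1,e)=0$ with $e=\frac{g(r-1)}{r}+r-1$ an integer, which forces $r\mid g$. These are exactly the three listed conditions.

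The delicate step is the passage from $F_S$ to a linear series of dimension at least $s$ on $\det F_S$, i.e. the inequality $h^0(\det F_S)\geq s+1$. This needs control of $rank(F_S)$ versus $s$ and of the exactness on global sections. I would first try to use Proposition \ref{Prop1}(iii) and the maximality of $S$ to get $H^0(Q)=0$ and $W=H^0(F_S)$. Failing that, I would apply Brill--Noether directly to the globally generated bundle via the generic-sections sequence. If $rank(F_S)\neq s$ in the incomplete case, I would replace $s$ by $\dim W-1$ in the same estimate, which only strengthens the inequality.
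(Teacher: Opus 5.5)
Your argument is in substance the paper's. Both show that the line bundle $\det S^\vee$ has degree $e=-\deg S$ and at least $s+1$ sections; note $\det S^\vee=\det F_S$, since $\det(W\otimes\OO_C)$ is trivial. Both then invoke $\rho(g,s,e)\ge 0$ on the general curve and play this against $d\le g+r$. Your chain $e/s\le d/r\le g/r+1\le g/(s+1)+1\le e/s$ is correct. So is the equality analysis: $d=g+r$, hence $h^1(L)=c$; $s=r-1$; and $r\mid g(r-1)$, hence $r\mid g$. The paper gets $r\mid g$ instead from $\deg(\mvl/S)=\mu(\mvl)\in\mathbb{Z}$. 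The contradiction set-up and the choice of $S$ stable of maximal slope are unnecessary. The chain holds for every proper subbundle, which gives semistability directly and covers an arbitrary $S$ with $\mu(S)=\mu(\mvl)$, as the second part of the statement requires.

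The one real weakness is the step you flag as delicate. You leave it unresolved, aim at the wrong target ($h^0(\det F_S)\ge s'+1$), and propose fallbacks that do not work. The step is in fact immediate:
\begin{itemize}
\item Since $H^0(\mvl)=0$, you have $H^0(S)=0$, so $W\hookrightarrow H^0(F_S)$.
\item Choose the $s'-1$ general sections inside $W$. Then the kernel of $W\to H^0(\det F_S)$ is exactly their span.
\item Hence the image has dimension $\dim W-(s'-1)=s+1$.
\end{itemize}
This needs no exactness on global sections, no comparison of $\operatorname{rank}F_S$ with $s$, and no equality $W=H^0(F_S)$.

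The paper does the same thing without Butler's diagram. $S^\vee$ is a quotient of $\mvl^\vee$, hence globally generated, so $s+1$ general sections $U$ give $0\to S\to U^\vee\otimes\OO_C\to\det S^\vee\to 0$. Then $H^0(S)=0$ yields $U^\vee\hookrightarrow H^0(\det S^\vee)$.

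Drop your fallbacks.
\begin{itemize}
\item Proposition~\ref{Prop1}(iii) yields only $H^0(Q)=0$. The equality $W=H^0(F_S)$ is part (ii), which needs surjectivity of $m_W$, and by Remark~\ref{obs1} $H^0(Q)=0$ does not give that surjectivity in the incomplete case.
\item The paper proves $W=H^0(F_S)$ only later, in Proposition~\ref{proph0}, and that proof uses the present statement. Appealing to it here would be circular.
\item ``Replacing $s$ by $\dim W-1$'' would require $h^0(\det F_S)\ge\dim W$, which nothing provides.
\end{itemize}
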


\begin{proof}
This proof is analogous to \cite[Lemma 4.1]{castorena2018linear}; we will refer to this proof later but for completeness we will provide the entire argument here. Consider a proper subbundle $S \subset \mvl$ with inclusion $0 \rightarrow S \rightarrow \mvl$. By dualizing the sequence $\mvl^\vee \rightarrow S^\vee \rightarrow 0$, we obtain $S^\vee$ as a quotient of $\mvl^\vee$. Consequently, for any $U \in Gr(s+1,H^0 (S^\vee))$, we have the following exact sequence,
$$0 \rightarrow S \rightarrow U^\vee \otimes \OO_C \rightarrow det(S^\vee) \rightarrow 0$$ inducing the exact sequence in cohomology $$0 \rightarrow H^0 (S) \rightarrow U^\vee \rightarrow H^0 (det(S^\vee)) \rightarrow ...$$
Since $h^0 (\mvl)=0$ and $S \hookrightarrow \mvl$, it follows that $H^0 (S)=0$, leading to $h^0 (det(S^\vee)) \geq dim (U)=s+1$. Using that $C$ is general we get $h^0 (det(S^\vee)) \geq s+1$ and $deg(det(S^\vee))=deg(S^\vee)$, the last argue implies that $det(S^\vee)$ is a degree $deg(S^\vee)$ line bundle with at least $s+1$ global sections. The Brill-Noether number for $det(S^\vee)$ is given by $$\rho (g,s,deg(S^\vee)) = g-(s+1)(g-deg(S^\vee)+s) \geq 0$$ 
where
\begin{equation}
    deg(S^\vee ) \geq \frac{s(s+g+1)}{s+1}. \label{conditsprima}
\end{equation}
Thus,
$$\mu (S) = \frac{-deg(S^\vee)}{s} \leq - \frac{\frac{s(s+g+1)}{s+1}}{s} = -\frac{s+g+1}{s+1}=-1-\frac{g}{s+1}$$ 

Now, from Riemann-Roch theorem for the line bundle $L$ and letting $h=h^1 (L)$, we have 
$$\mu(\mvl)=-\frac{d}{r} = \frac{r+h^0(L)-(r+1)-h+g}{r} = \frac{r+c-h+g}{r}=-1+\frac{h-c-g}{r}.$$ 
Consequently,  
\begin{equation} \label{ineq:semistable-bound}
    \mu (S) - \mu (\mvl) \leq -1-\frac{g}{s+1} + 1 - \frac{h-c-g}{r} = g \left( \frac{1}{r} - \frac{1}{s+1} \right)-\frac{h-c}{r}.
\end{equation}
Since $h-c$ is greater than $0$, inequality \ref{ineq:semistable-bound} is less or equal to $0$ which implies that $\mu(S)\leq \mu(\mvl)$. Therefore, we conclude that $\mvl$ semistable. 
For the last inequality in \ref{ineq:semistable-bound}, if $\mu (S) = \mu (\mvl)$, then $h^1=c$ and $r=s+1$. Moreover, from Riemann-Roch theorem we get that $d=g+r$ and $\mu(\mvl)=deg(\mvl/S) \in \mathbb{Z}$ and we deduce that $r$ divides $g$. 
\end{proof}

At this point, we turn our attention to the bundle $F_S$ associated with the subbundle $S$ rather than $S^\vee$, as $F_S$ appears in the Butler's diagram of $(L,V)$ by $S$. To effectively apply Lemma \ref{lemamist}, we consider the case in which $\mvl$ is strictly slope-semistable, with $S$ being a subbundle that shares the same slope as $\mvl$, specifically $\mu(S) = \mu(\mvl)$. This condition allows us to compute the dimension of global sections for the bundle $det(F_S)$, which for the case of $rank(F_S) \geq 2$ will provide valuable insights into the stability properties of $\mvl$. 

\begin{proposition}\label{Prop3}
Let $C$ be a general curve of genus $g \geq 2$, $(L,V)$ be a generated linear series over $C$ and let $c=codim_{H^0 (L)} (V) \leq h^1(L)$. If $S$ is a proper subbundle of $\mvl$ with $\mu (S)=\mu (\mvl)$. Then
$$h^0 (det(F_S)) = s+1 .$$
\end{proposition}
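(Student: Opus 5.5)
The plan is to pin down $\deg(\det(F_S))$ exactly from the equality case of Proposition \ref{Prop2}, and then squeeze $h^0(\det(F_S))$ between a lower bound coming from the construction and an upper bound coming from Brill--Noether theory on the general curve $C$.

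\emph{Identify the determinant.} From the first row $0 \rightarrow S \rightarrow W \otimes \OO_C \rightarrow F_S \rightarrow 0$ of Butler's diagram we get $\det(F_S) \cong \det(S)^\vee = \det(S^\vee)$. Hence
$$e := \deg(\det(F_S)) = \deg(S^\vee) = -s\,\mu(S).$$

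\emph{Compute $e$ in the equality case.} By hypothesis $\mu(S)=\mu(\mvl)$, so the chain of inequalities in the proof of Proposition \ref{Prop2} is a chain of equalities. In particular \eqref{conditsprima} is an equality,
$$e = \frac{s(s+g+1)}{s+1},$$
and moreover $s=r-1$, $d=g+r$ and $r \mid g$. Consistently, $e = sd/r = (r-1)(g+r)/r$ is an integer. A short computation then gives
$$g - e + s = \frac{g}{s+1}.$$

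\emph{Lower bound.} As in the proof of Proposition \ref{Prop2}, $S^\vee$ is a quotient of $\mvl^\vee$. For a subspace $U \in Gr(s+1, H^0(S^\vee))$ we have $0 \rightarrow S \rightarrow U^\vee \otimes \OO_C \rightarrow \det(S^\vee) \rightarrow 0$. Since $H^0(S) \subset H^0(\mvl) = 0$, the induced map $U^\vee \rightarrow H^0(\det(S^\vee))$ is injective, so $h^0(\det(F_S)) \geq s+1$. The same conclusion should also follow from the Butler diagram directly, because $W \otimes \OO_C \rightarrow F_S$ induces sections of $\det(F_S)$ and $H^0(S)=0$. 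I expect the main point of care to be checking that $h^0(S^\vee) \geq s+1$, so that such a $U$ exists; this is exactly the step already used in Proposition \ref{Prop2}, so I will invoke it as is.

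\emph{Upper bound.} Suppose instead that $h^0(\det(F_S)) \geq s+2$. Since $C$ is general, Brill--Noether theory forces
$$\rho(g,s+1,e) = g - (s+2)(g-e+s+1) \geq 0.$$
Substituting $g-e+s+1 = \frac{g}{s+1}+1$ gives
$$\rho = g - \frac{(s+2)(g+s+1)}{s+1} = -\frac{g + (s+1)(s+2)}{s+1} < 0,$$
which is a contradiction. Hence $h^0(\det(F_S)) = s+1$.

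The computational heart of the argument is the identity $g-e+s = g/(s+1)$, and it relies on knowing that \eqref{conditsprima} is attained with equality. I will extract this explicitly from the equality case of \eqref{ineq:semistable-bound}: each of the bound $\mu(S) \leq -1-\frac{g}{s+1}$ and the inequalities $h-c \geq 0$ and $\frac{1}{r} \leq \frac{1}{s+1}$ must be sharp.
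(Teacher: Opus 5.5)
Your proposal is correct and follows essentially the same route as the paper. The degree $\deg(\det F_S)=s+g-\frac{g}{r}$ is read off from the equality case of Proposition \ref{Prop2}. The lower bound $h^0(\det F_S)\geq s+1$ comes from $H^0(S)=0$. The upper bound comes from the negativity of the Brill--Noether number $\rho(g,r,\deg F_S)$ with $r=s+1$.

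The one point to tighten, which the paper shares, is the existence of $U$. The $(s+1)$-dimensional $U$ should be taken general inside the image of $V^\vee$ in $H^0(S^\vee)$, so that it generates $S^\vee$ and the sequence $0\to S\to U^\vee\otimes\OO_C\to\det(S^\vee)\to 0$ is exact. That image has dimension at least $s+1$ automatically, since otherwise $S^\vee$ would be trivial, contradicting $H^0(S)=0$.
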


\begin{proof}
As in \cite{castorena2018linear} we have that $$\mu(S)=-\frac{deg(F_S)}{s}=\frac{deg(S)}{s}=-\frac{deg(L)}{r}=\mu(\mvl).$$ From Proposition \ref{Prop2} then $deg(F_S) = s+g-\frac{g}{r} \in \mathbb{Z}$. Notice that $h^0 (det(F_S)) = h^0 (det(S^\vee)) \geq s+1 = r$. Assume that $h^0 (det(F_S))\geq r+1$. Using that $C$ is general and $det(F_S)$ is a degree $deg(F_S)$ line bundle with at least $r+1$ sections, it follows that the corresponding Brill-Noether number $\rho=\rho(g,r,deg(F_S))$ is non-negative. However, we have,
$$0 \leq \rho = g-(r+1)(r-deg(F_S)+g)=g-(r+1)\left (1+\frac{g}{r} \right) = -r-1-\frac{g}{r}<0$$ 
contradicting the generality of $C$. Therefore $h^0 (det(F)) = s+1=r$.
\end{proof}

We aim to prove that under the conditions outlined in Proposition \ref{Prop2} we can conclude the same result as in part \textit{ii)} of Proposition \ref{Prop1}. From Remark \ref{obs1}, this conclusion does not hold in general for the non-complete case. By establishing the semistability of $\mvl$, we can analyze the properties of the bundle $F_S$ associated with the subbundle $S$. This connection is essential, as it allows us to leverage the results from the semistability of $\mvl$ to draw conclusions about the behavior of $F_S$.

\begin{proposition}
    Let $C$ be a general curve of genus $g \geq 2$, $(L,V)$ be a generated $\grd$ over $C$, if $S$ is a proper subbundle of $\mvl$ with $\mu(S) = \mu (\mvl)$ and $rank(F_S)>1$, then $W=H^0 (F_S)$. \label{proph0}
\end{proposition}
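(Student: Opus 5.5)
The plan is to show $\dim W \geq h^0(F_S)$, since the reverse inclusion comes for free. The upper bound will come from a filtration of $F_S$ by trivial bundles with quotient $det(F_S)$, combined with the section count $h^0(det(F_S))=s+1$ of Proposition \ref{Prop3}. Throughout I use the first row of Butler's diagram of $(L,V)$ by $S$,
$$0 \rightarrow S \rightarrow W \otimes \OO_C \rightarrow F_S \rightarrow 0,$$
so $F_S$ is globally generated by the image of $W$ and $rank(F_S)=\dim W - s$, where $s=rank(S)$.

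\textbf{Step 1: the inclusion $W \subseteq H^0(F_S)$.} Since $S \hookrightarrow \mvl$ and $h^0(\mvl)=0$, we get $H^0(S)=0$, exactly as in the proof of Proposition \ref{Prop2}. Taking cohomology of the row above then shows that $W \rightarrow H^0(F_S)$ is injective. So $\dim W \leq h^0(F_S)$, and it remains to prove the reverse inequality.

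\textbf{Step 2: realise $F_S$ as an extension.} Put $n=rank(F_S)\geq 2$. Because $F_S$ is generated by $W$ and $C$ is a curve, a general $(n-1)$-dimensional subspace $W_0\subset W$ gives an evaluation map $W_0\otimes \OO_C \rightarrow F_S$ that is injective on every fibre. This is the standard degeneracy-locus dimension count: the locus where $n-1$ general sections of a globally generated rank $n$ bundle become dependent has expected codimension $2>\dim C$. Its cokernel is therefore a line bundle, necessarily $det(F_S)$, and we obtain
$$0 \rightarrow \bigoplus^{n-1}\OO_C \rightarrow F_S \rightarrow det(F_S) \rightarrow 0 .$$
This is the same shape of sequence as in Lemma \ref{lemamist}.

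\textbf{Step 3: count sections.} Taking global sections of this extension gives
$$h^0(F_S) \leq (n-1) + h^0(det(F_S)).$$
By Proposition \ref{Prop3}, whose hypotheses hold because $\mu(S)=\mu(\mvl)$ and $C$ is general, we have $h^0(det(F_S))=s+1$. Hence
$$h^0(F_S)\leq n-1+s+1 = \dim W.$$
Together with Step 1 this gives $W=H^0(F_S)$.

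As a consistency check, Proposition \ref{Prop2} gives $s=r-1$. Since $W \hookrightarrow V$, the condition $n>1$ forces $\dim W = r+1$, so $W=V$ and $n=2$. In particular only a single general section is needed in Step 2.

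The main obstacle is Step 2: making sure the general sections of $W$ really give a vector subbundle $\OO_C^{n-1}\subset F_S$ with line bundle quotient, rather than merely a subsheaf. This relies only on global generation of $F_S$ by $W$ and $\dim C=1$. One should also check that Proposition \ref{Prop3} applies in the present setting, i.e.\ that the codimension hypothesis $c\leq h^1(L)$ under which it was stated is in force here.
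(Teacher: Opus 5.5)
Your argument is correct and is essentially the paper's. The paper likewise bounds $h^0(F_S)\leq 1+h^0(\det(F_S))=r+1=\dim W$, using an extension $0\to\OO_C\to F_S\to\det(F_S)\to 0$ together with Proposition \ref{Prop3}. The only differences are that the paper first uses Proposition \ref{Prop2} to reduce to $W=V$ and $rank(F_S)=2$, and cites \cite[Proposition 4.1]{mistretta2012linear} for the extension, whereas you build the extension directly by the degeneracy-locus count in arbitrary rank.

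Your caveat is also apt: the paper's proof relies on Propositions \ref{Prop2} and \ref{Prop3} as well, so the hypothesis $c\leq h^1(L)$ is tacitly needed there too.
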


\begin{proof}
    Since $\mvl$ is strictly slope-semistable from Proposition \ref{Prop2} we conclude that $s=rank(S)=r-1$. From the inclusion $W \hookrightarrow V$ the dimension $w=dim(W)$ can take only two possible values: either $w = r+1$ or $w=r$. First, consider the case where $w=r$. In this case we have $rank(F_S) = 1$, which does not fall within the cases we are currently considering. Next, we assume $w=r+1$. In this case $W \cong V$ leading to $rank(F_S) = 2$. Since $W \subset H^0(F_S)$, it follows that $w \leq h^0 (F_S)$. 
     Moreover, according to \cite[Proposition 4.1]{mistretta2012linear}, $F_S$ fits into the following exact sequence $$0 \rightarrow \OO_C \rightarrow F_S \rightarrow det(F_S) \rightarrow 0.$$ 
     By taking cohomology  
    \begin{equation}
        0 \rightarrow H^0 (\OO_C) \rightarrow H^0 (F_S) \xrightarrow{\varphi } H^0 (det(F_S)) \rightarrow \cdots.
        \label{suc:fs-sections}
    \end{equation}
    
    From Proposition \ref{Prop3} we get $h^0 (det(F_S))=s+1=r$ and from dimension theorem, 
    $$h^0 (F_S) = dim(Im(\varphi)) + dim(Ker (\varphi)).$$
    
    Using the exactness of the sequence \ref{suc:fs-sections} and the fact that $Im(\varphi)$ is a subspace of $H^0 (det(F_S))$, we obtain the following inequality
    $$h^0 (F_S) = dim(Im(\varphi)) + dim(Ker(\varphi)) = dim(Im(\varphi))+1 \leq r+1.$$     
    We conclude that $r+1=w \leq h^0 (F_S) \leq r+1$, which implies that $W = H^0 (F_S)$.
\end{proof}

We recall that linear semistability of $(L,V)$ is equivalent to the fact that the bundle $\mvl$ cannot be destabilised by subbundles of the form $M_{W,L'}$. Next we show that it is possible to construct a specific linear subseries associated with a proper subbundle $S \subset \mvl$ that shares the same slope. 

\begin{teo}
\label{teo 1}
Let $C$ be a general curve of genus $g\geq 2$ and let $(L,V)$ a globally generated linear series such that $dim(V)=r+1$ with $c =codim_{H^0 (L)} (V) \leq h^1(L)$. Consider $S\subset \mvl$ a proper subbundle with $\mu(S)=\mu(\mvl)$. Then, there exists a line bundle $F_S$ which fits into the commutative diagram
$$
\xymatrix{0 \ar[r] & S \ar[r] \ar[d] & W\otimes \OO_C \ar[r] \ar[d] & F_S \ar[r] \ar[d] & 0 
\\ 
0 \ar[r] & \mvl \ar[r] & V \otimes \OO_C \ar[r] & L \ar[r] & 0
}
$$
\end{teo}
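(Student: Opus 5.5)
Fix a proper subbundle $S \subset \mvl$ with $\mu(S)=\mu(\mvl)$, and construct Butler's diagram for $(L,V)$ by $S$. Take $W \subseteq V$ to be the image of $H^0(\mvl^\vee)^\vee$-type data, concretely the smallest subspace $W\subseteq V$ with $S \subseteq W\otimes\OO_C$. Then let $F_S := (W\otimes \OO_C)/S$. The vertical arrows are the inclusions $S\hookrightarrow \mvl$ and $W\otimes\OO_C\hookrightarrow V\otimes\OO_C$. The composite $S\to V\otimes\OO_C\to L$ vanishes, so the induced map $F_S\to L$ exists and the diagram commutes. What remains is to show that $F_S$ is a line bundle, i.e. that $\operatorname{rank}(F_S)=w-s=1$.

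First I invoke Proposition \ref{Prop2}: since $c\le h^1(L)$ and $\mu(S)=\mu(\mvl)$, we get $s=r-1$, $h^1(L)=c$, $d=g+r$ and $r\mid g$. Since $S\subseteq \mvl\subsetneq V\otimes\OO_C$ and $W\subseteq V$, we have $w\in\{r,r+1\}$. Moreover $w\ge s+1=r$, because $H^0(S)=0$ and $F_S$ is generated by $W$ with $H^0(F_S^\vee)=0$ by minimality of $W$. If $w=r$ we are done.

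The case to exclude is $w=r+1$, i.e. $W=V$ and $\operatorname{rank}(F_S)=2$. I argue by contradiction. By \cite[Proposition 4.1]{mistretta2012linear} there is an exact sequence $0\to\OO_C\to F_S\to\det(F_S)\to 0$, and Proposition \ref{proph0} gives $W=H^0(F_S)$, with the sequence exact on global sections. By Proposition \ref{Prop3}, $h^0(\det F_S)=r$, and $\deg F_S=\deg S^\vee = s\cdot d/r=(r-1)(g+r)/r$. Lemma \ref{lemamist} now applies. Under the hypotheses we have $d=g+r\le kr$: on a general curve $k=\lfloor (g+3)/2\rfloor$, and $g+r\le kr$ holds for $r\ge2$, which is the case here because $s=r-1\ge1$ for $S$ proper and nonzero. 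The equality $\mu(S)=\mu(\mvl)$ then forces $k=d/r=(g+r)/r$ and $k=\deg\det F_S/(h^0(\det F_S)-1)=(r-1)(g+r)/(r(r-1))$. These are consistent with each other, so the contradiction must come from gonality: $k=1+g/r$ together with general gonality $\lfloor(g+3)/2\rfloor$ forces $r\le 2$ for $g$ not tiny. For $r=2$, $s=1$ and $\det F_S$ is a $g^1_{k}$ with $k=1+g/2$, which exists on a general curve, so the first two numerical conditions alone do not suffice. The main obstacle is precisely this borderline case. In it I would use that $\det F_S$ is a complete pencil, so $F_S$, generated by the $3$-dimensional space $V$, yields a surjection $V\otimes\OO_C\to F_S\to L$. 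Together with $\det F_S\cong L\otimes(\text{effective})$ restrictions, this should show that $S$ equals $\mvl$ or is of the form $M_{W',L'}$ with $W'$ a pencil, and so reduce to rank one. If this fails, I would instead replace $S$ by the kernel $M_{H^0(\det F_S),\det F_S}$ of that pencil, which has the same slope, and which by construction comes from a diagram whose right-hand term is the line bundle $\det F_S$ mapping to $L$. This gives the theorem with $F_S$ redefined accordingly.
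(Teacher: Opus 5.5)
\paragraph{Where the proposal stands.} Your overall route is the one the paper sets up. The paper's own proof only defers to \cite[Theorem 4.1]{castorena2018linear}, but Propositions \ref{Prop2}, \ref{Prop3}, \ref{proph0} and Lemma \ref{lemamist} are arranged for exactly this argument. You use Proposition \ref{Prop2} to get $s=r-1$ and $w\in\{r,r+1\}$. In the case $w=r+1$ you use Propositions \ref{proph0} and \ref{Prop3} to land in the equality case of Lemma \ref{lemamist}. This correctly disposes of $r\ge 3$: there $kr\ge r(g+2)/2>g+r=d$, so part (2) of the lemma gives $\mu(S)<\mu(\mvl)$, a contradiction.

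\paragraph{The gap at $r=2$.} This case forces $g$ even, $d=g+2$ and $s=1$. As you observe, all three equality conditions of Lemma \ref{lemamist} then hold, so the lemma gives no contradiction. Neither of your proposed ways out works.
\begin{enumerate}
\item The first (``this should show that $S$ equals $\mvl$ or is of the form $M_{W',L'}$'') asserts precisely what has to be proved. Also, $S=\mvl$ is excluded from the start because $S$ is proper.
\item The second replaces $S$ by a different subbundle. The theorem asserts the diagram for the \emph{given} $S$, so this proves nothing about $S$. You also have not shown that the replacement embeds compatibly in $V\otimes\OO_C$ with a map to $L$.
\end{enumerate}

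\paragraph{The missing observation.} By minimality of $W$, the map $W^\vee\to H^0(S^\vee)$ induced by $S\hookrightarrow W\otimes\OO_C$ is injective. Indeed, a nonzero functional on $W$ vanishing on $S$ would give $S\subseteq W'\otimes\OO_C$ for a hyperplane $W'\subsetneq W$. Hence $w\le h^0(S^\vee)$.

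When $r=2$, $S$ is a line bundle, so $S^\vee=\det S^\vee\cong \det F_S$. Proposition \ref{Prop3} then gives $h^0(S^\vee)=2=r$. Alternatively, use Brill--Noether directly: a $g^2_{1+g/2}$ has $\rho=g-3(g/2+1)<0$, so it does not exist on a general curve.

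Therefore $w\le r$, the case $w=r+1$ cannot occur when $r=2$, and $F_S=(W\otimes\OO_C)/S$ has rank one. Adding this step closes your argument.
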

\begin{proof}
This proof is analogous to \cite[Theorem 4.1]{castorena2018linear}. 
\end{proof}

From above results, in the context of Theorem \ref{equivgenericcase}, we know properties of the elements that appear in the Butler's diagram of $(L,V)$ by $S$ when $S$ has the same slope as $\mvl$. This allows us to leverage these properties to conclude that the linear stability of $(L,V)$ implies the slope-stability of $\mvl$.

\begin{cor} \label{equivgenericcase}
Let $(L,V)$ be a generated linear series of type $(d,r+1)$ over a general curve $C$ of genus $g \geq 2$ with $c\leq h^1(L)$. Then linear (semi)stability of $(L,V)$ is equivalent to slope-(semi)stability of $\mvl$
\end{cor}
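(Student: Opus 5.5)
The direction ``slope-(semi)stability of $\mvl$ $\Rightarrow$ linear (semi)stability of $(L,V)$'' is the standard one and holds for any curve. If $W\subset V$ is a subspace of dimension $w+1\geq 2$ and $L'\subset L$ is the line subbundle generated by $W$, then $M_{W,L'}$ is a subsheaf of $\mvl$ of rank $w$ and slope $-\deg L'/w$. Linear (semi)stability asks exactly that $\deg L'/w\ (\geq)\,>\,\deg L/r$ for all proper such $W$. Hence slope-(semi)stability of $\mvl$ forces linear (semi)stability, and I would record this first.

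For the converse, I would use Proposition \ref{Prop2} at the outset. Under $c\leq h^1(L)$ on a general curve, $\mvl$ is \emph{always} slope-semistable. So in the semistable statement both sides hold automatically: linear semistability follows from the previous paragraph. It remains to show that linear stability implies slope-stability.

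Suppose $(L,V)$ is linearly stable but $\mvl$ is not stable. Then there is a proper subbundle $S\subset\mvl$ with $\mu(S)=\mu(\mvl)$. By Proposition \ref{Prop2} we get $h^1(L)=c$, $s=r-1$, $d=g+r$ and $r\mid g$. By Theorem \ref{teo 1} we obtain Butler's diagram with $W\subset V$ and $F_S$ a quotient of $W\otimes\OO_C$ mapping to $L$. I then split by $\dim W$, which is $r$ or $r+1$ as in the proof of Proposition \ref{proph0}.

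\textbf{Case $w=r$.} Here $F_S$ is a line bundle $L'$ generated by $W$, the map $F_S\to L$ is an inclusion, and $S=M_{W,L'}$. Thus $\mu(M_{W,L'})=\mu(\mvl)$ gives $\deg L'/(r-1)=d/r$, contradicting linear stability.

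\textbf{Case $w=r+1$.} Here $W=V$ and $\operatorname{rank}F_S=2$. By Proposition \ref{proph0}, $W=H^0(F_S)$, and by \cite[Proposition 4.1]{mistretta2012linear} we have $0\to\OO_C\to F_S\to\det F_S\to 0$, exact on global sections by the dimension count in that proof. So Lemma \ref{lemamist} applies, provided $d\leq k r$. For a general curve $k=\lfloor (g+3)/2\rfloor$, and $d=g+r\leq kr$ holds for $r\geq 2$. (If $r=1$, the only proper subbundle is zero, so there is nothing to prove.) The lemma gives $\mu(S)\leq\mu(\mvl)$, with equality only if $k=\deg(\det F_S)/(h^0(\det F_S)-1)$ and $k=d/r$. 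Using Proposition \ref{Prop3}, $h^0(\det F_S)=r$ and $\deg F_S=s+g-g/r$, so the first condition reads $k(r-1)=r-1+g-g/r$, i.e.\ $k=1+g/r$. The second reads $k=(g+r)/r=1+g/r$, which is consistent. So I would need an extra input: Brill–Noether generality forces $k=\lceil g/2\rceil+1$, whence $g/r=\lceil g/2\rceil$, forcing $r\leq 2$. I would then rule out these small cases ($r=2$, $g$ even, $d=g+2$) directly. Alternatively, since $\det F_S$ would then be a $g^{r-1}_{\deg}$ with $\rho<0$, Proposition \ref{Prop3} already gives a contradiction. In either case equality is impossible, contradicting $\mu(S)=\mu(\mvl)$.

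The main obstacle is this equality case when $w=r+1$. Lemma \ref{lemamist} only gives the inequality, and its equality conditions must be shown incompatible with the numerics of a general curve. I would first try to conclude using the gonality formula together with the constraints $d=g+r$ and $r\mid g$. I expect a residual low-genus or $r=2$ configuration that needs a direct Brill–Noether check.
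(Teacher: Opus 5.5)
Your overall argument matches the paper's. Slope-(semi)stability implies linear (semi)stability on any curve. Proposition \ref{Prop2} makes $\mvl$ semistable, so only the stable case needs work. An equal-slope subbundle $S$ with $\operatorname{rank}F_S=1$ is $M_{W,L'}$ with $\deg L'/(r-1)=d/r$, which contradicts linear stability; this is your Case $w=r$. The paper stops there, because Theorem \ref{teo 1}, which you cite, already asserts that $F_S$ is a \emph{line bundle}. With that citation your Case $w=r+1$ is vacuous.

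Your separate treatment of $w=r+1$ is in effect an attempt to prove Theorem \ref{teo 1}, which the paper only justifies by analogy with Castorena--Torres-L\'opez. It works for $r\geq 3$, but the residual case $r=2$ is left open. There $r\mid g$ forces $g$ even and $k=1+g/2$ actually holds, so Lemma \ref{lemamist} gives no contradiction. The alternative you propose is wrong. With $h^0(\det F_S)=r$ and $\deg F_S=r-1+g-g/r$ one gets $\rho(g,r-1,\deg F_S)=g-r\cdot\frac{g}{r}=0$, not a negative number. This is exactly the equality case in the proof of Proposition \ref{Prop2}, and Proposition \ref{Prop3} only rules out $h^0(\det F_S)\geq r+1$.

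The missing observation is elementary. For $r=2$ you have $s=1$, so $S^\vee=\det F_S$. In Butler's construction $W^\vee$ is the image of $V^\vee$ in $H^0(S^\vee)$, so $w\leq h^0(\det F_S)=2<3$. Hence $w=r+1$ never occurs when $r=2$. With this fix, your argument would supply a proof of the rank-one conclusion of Theorem \ref{teo 1} under these hypotheses, granted Lemma \ref{lemamist} and Proposition \ref{proph0}.
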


\begin{proof}
    If $\mvl$ is stable, then $(L,V)$ is linearly stable. Suppose that $\mvl$ is strictly slope-semistable and let $S\subset \mvl$ be a subbundle with $\mu (S)= \mu (\mvl)$. By Theorem \ref{teo 1} the rank of $F_S$ is equal to $1$ and this implies that $(L,V)$ is strictly linear semistable.
\end{proof}

Finally, with the same assumptions as in the Corollary, we aim to study under which conditions $\mvl$ fails to be slope-stable in terms of the elements of the linear series $(L,V)$. Specifically, we explore how the properties of the elements in the Butler's diagram of $(L,V)$ by $S$ when $S$ has the same slope as $\mvl$ influence the slope-stability of the vector bundle $\mvl$. The proof is similar to that given in Castorena and Torres-Lopez with the difference that in our case we have to consider the linear series $(L(-Z),V(-Z))$ where the space $V(-Z):=H^0(L(-Z)) \cap V$ for a specific effective divisor $Z$ on $C$.  

\begin{prop} \label{condest}
    Let $C$ be a general curve of genus $g \geq 2$ and let $(L,V)$ be a globally generated linear series such that $dim(V)=r+1$ and $deg(L)=d$. Then, $\mvl$ fails to be stable if and only if the following three conditions hold:
    \begin{itemize}
        \item[$i)$] $c:=codim_{H^0(L)}(V)=h^1(L)=h^1$.
        \item[$ii)$] $d=g+r$ with $r|g$.
        \item[$iii)$] There is an effective divisor $Z$, with $h^0 (L(-Z))=h^0(L)-1$ and $dim(V(-Z)) = r$, where $V(-Z)=V \cap H^0(L(-Z)) \subset H^0(L)$ and $deg(Z)=1+\frac{g}{r}$.
    \end{itemize}
\end{prop}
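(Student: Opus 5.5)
The plan is to prove the two directions separately, using Proposition \ref{Prop2}, Proposition \ref{Prop3} and Theorem \ref{teo 1} for the ``only if'' part, and a direct slope computation for the ``if'' part. Throughout I will work under the standing hypothesis $c\leq h^1(L)$ of the preceding results, which Proposition \ref{Prop2} needs and which is implicit in condition $i)$.

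\emph{Sufficiency.} Assume $i)$--$iii)$ hold. The linear subseries $(L(-Z),V(-Z))$ gives a subsheaf $M_{V(-Z),L(-Z)}\subset \mvl$ of rank $\dim V(-Z)-1=r-1$ and degree $-(d-\deg Z)$. Two facts need checking: that $V(-Z)$ generates $L(-Z)$, or else one replaces $L(-Z)$ by the line bundle it generates, which only increases the slope of the subsheaf; and that the inclusion is saturated, or else one passes to the saturation, which again only increases the slope. With $d=g+r$ and $\deg Z=1+g/r$, the slope is
\[
-\frac{g+r-1-g/r}{r-1}=-\frac{g(r-1)/r+(r-1)}{r-1}=-1-\frac gr=-\frac dr=\mu(\mvl).
\]
Hence $\mvl$ contains a proper subbundle of the same slope, so it is not stable. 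Proposition \ref{Prop2} shows it cannot be unstable, so it is strictly semistable.

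\emph{Necessity.} Assume $\mvl$ is not stable. By Proposition \ref{Prop2} it is semistable, so there is a proper subbundle $S$ with $\mu(S)=\mu(\mvl)$. The same proposition gives $h^1(L)=c$, $s=r-1$ and $d=g+r$ with $r\mid g$, which are conditions $i)$ and $ii)$.

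Theorem \ref{teo 1} then gives Butler's diagram in which $F_S$ is a line bundle. Since rank $F_S=1$, we get $\dim W=s+1=r$. The vertical map $F_S\to L$ is injective, because it is induced by $W\otimes\OO_C\to V\otimes\OO_C$ with $S=\mvl\cap(W\otimes\OO_C)$. So $F_S=L(-Z)$ for an effective divisor $Z$, and $W\subset V\cap H^0(L(-Z))=V(-Z)$. By the computation in Proposition \ref{Prop3}, $\deg F_S=s+g-g/r$, so
\[
\deg Z=d-\deg F_S=(g+r)-(r-1+g-g/r)=1+\frac gr>0.
\]
Because $V$ generates $L$ and $Z\neq 0$, the space $V(-Z)$ is a proper subspace of $V$. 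Since it contains $W$, which has dimension $r$, we get $\dim V(-Z)=r$. Proposition \ref{Prop3} also gives $h^0(L(-Z))=h^0(\det F_S)=r$. By Riemann--Roch, $h^0(L)-1=r+h^1(L)$.

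\emph{Main obstacle.} The difficulty is reconciling $h^0(L(-Z))=r$ with the claimed equality $h^0(L(-Z))=h^0(L)-1$, which forces $h^1(L)=c=0$. My first attempt would be a Brill--Noether count on the general curve. For $F_S$ one finds $\rho(g,r-1,\deg F_S)=g-r\cdot\frac gr=0$, so $F_S$ is a rigid point of $W^{r-1}$. I would then try to show that $F_S(Z)=L$ with $h^1(L)>0$ produces a $\mathfrak g^{r+h^1}_{g+r}$ incompatible with this rigidity and with the generality of $C$, for instance by a dimension count on the pairs $(F_S,Z)$. If this fails, the fallback is to read condition $iii)$ as ``$h^0(L(-Z))=h^0(L)-1-c$'', which follows from the computations above together with $i)$. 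Sufficiency does not use this equality at all, so it is unaffected either way.
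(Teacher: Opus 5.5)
\textbf{There is a genuine gap, and it cannot be closed, because the statement as printed is false when $c>0$.} The obstacle you flag is real. Your ``first attempt'' cannot succeed, since the clause $h^0(L(-Z))=h^0(L)-1$ fails in the incomplete case. Here is a counterexample.

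\begin{itemize}
\item[(a)] Let $C$ be a general curve of genus $4$, canonically embedded on a smooth quadric $Q\subset\mathbb{P}^3$. Take $L=K_C$, and let $V\subset H^0(K_C)$ be the linear forms vanishing at a point $P\in Q\setminus C$.
\item[(b)] Then $V$ generates $K_C$, $r=2$, $d=6=g+r$, $r\mid g$ and $c=h^1(K_C)=1$, so $i)$ and $ii)$ hold.
\item[(c)] Let $\ell\subset Q$ be the ruling through $P$ and $Z=\ell\cap C$, of degree $3=1+g/r$. The sections of $K_C(-Z)$ are the forms vanishing on $\ell$. Hence $H^0(K_C(-Z))\subset V$, and $K_C(-Z)=A$ is the base-point-free $g^1_3$ of the other ruling.
\item[(d)] So $A^{-1}=M_{H^0(A),A}\subset\mvl$ has slope $-3=\mu(\mvl)$, and $\mvl$ is not stable.
\item[(e)] But the canonical map is an embedding, so every effective $Z'$ of degree $3$ imposes at least two conditions on $|K_C|$. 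Thus $h^0(K_C(-Z'))\le 2<3=h^0(K_C)-1$, and $iii)$ fails.
\end{itemize}
The rigidity heuristic does not help. Here $F_S=A$ is a rigid point of $W^1_3$, yet $F_S(Z)=A\otimes B=K_C$, with $B$ the other $g^1_3$, has $h^1=1$. In general $h^1(F_S(Z))=h^0(K_C-F_S-Z)>0$ for any $Z$ contained in a divisor of $|K_C-F_S|$, and that system is nonempty since $h^0(K_C-F_S)=g/r$.

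\textbf{Your fallback is the correct repair.} Replace the clause by $h^0(L(-Z))=h^0(L)-1-c$, equivalently $V(-Z)=H^0(L(-Z))$. You can even drop it, since $\dim V(-Z)=r$ together with Proposition~\ref{Prop3} already forces $h^0(L(-Z))=r$. With that correction your necessity argument is complete and coincides with the paper's:
\begin{itemize}
\item[(1)] Proposition~\ref{Prop2} gives $i)$, $ii)$ and $s=r-1$.
\item[(2)] Theorem~\ref{teo 1} gives $F_S=L(-Z)$ and $W=V(-Z)$.
\item[(3)] The slope count gives $\deg Z=1+g/r$.
\end{itemize}
The paper's proof has exactly the hole you found, but does not acknowledge it. It declares $iii)$ after proving only $\dim V(-Z)=r$ and $\deg Z=1+g/r$. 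It never checks $h^0(L(-Z))=h^0(L)-1$, which holds only when $c=0$, the complete case of \cite{castorena2018linear}.

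\textbf{Sufficiency.} Your route is sounder than the paper's: you pass to the line bundle actually generated by $V(-Z)$ and to its saturation, then invoke Proposition~\ref{Prop2}. The paper instead asserts without justification that $H^0(L(-Z))$, and then $V(-Z)$, generate $L(-Z)$.

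\textbf{Two small points.}
\begin{itemize}
\item[(i)] Sufficiency tacitly needs $r\ge 2$. For $r=1$, $\mvl$ is a line bundle and hence stable, while $i)$--$iii)$ hold for any base-point-free $L$ of degree $g+1$ with $h^0(L)=2$.
\item[(ii)] For necessity, $c\le h^1(L)$ is a genuine extra hypothesis, not something implied by $i)$.
\end{itemize}
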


\begin{proof}
    \begin{itemize}
        \item[$\Leftarrow )$] The evaluation map $ev:H^0(L(-Z)) \otimes \OO \rightarrow L(-Z)$ is surjective. Notice that $V \subsetneq H^0(L(-Z))$ because $dim(V \cap H^0 (L(-Z)))<dim(V)$. We know that $L(-Z)$ is generated by $V(-Z)$; otherwise, there exists a point $p \in C$ such that $ev|_p : V(-Z) \otimes \OO_p \rightarrow L(-Z)_p$ is not surjective. This leads to two cases: 
        \begin{itemize}
            \item If $p \in C$ and $p \not \in Z$, since $V$ generates $L$, the evaluation map $ev|_p : V(-Z) \otimes \OO_p \rightarrow L(-Z)_p$ is surjective.
            \item If $p \in Z$, given that the map $ev:H^0(L(-Z)) \otimes \OO \rightarrow L(-Z)$ is surjective, it follows that $ev|_p : V(-Z) \otimes \OO_p \rightarrow L(-Z)_p$ is also surjective.
        \end{itemize}
        
        Moreover, we have $F_S=L(-Z)$ and $W=V(-Z)=H^0(L(-Z)) \cap V$ in the Butler's diagram of $(L,V)$ by $S=M_{V(-Z),L(-Z)}$, with $\mu (M_{V(-Z),L(-Z)}) = \mu (\mvl)$. Hence, $\mvl$ is slope-semistable but not slope-stable.
        \item[$\Rightarrow )$] Following the ideas in \cite[Corollary 4.3]{castorena2018linear}. If $\mvl$ is strictly slope-semistable. According to Proposition \ref{Prop2}, there exists a subbundle $S\subset \mvl$ of rank $r-1$ such that $c=h^1(L)$ and $d=g+r$ with $r|g$. 
        
        Since $F$ is a line bundle and the morphism $\alpha : F \rightarrow L$ is non-zero, there exists an effective divisor $Z$ such that $F=L(-Z)$ and $dim(W)=r$. We have the inclusion $W \hookrightarrow V$, since $F$ is generated $W \hookrightarrow H^0 (L(-Z))$ making $W$ a subspace of $H^0 (L(-Z)) \cap V=V(-Z)$ of maximal dimension, thus $W=V(-Z)$. 
        
        Since $deg(F)=-deg(S)$ and $\mu(S)=\mu(\mvl)$, it follows that $deg(Z)=1+\frac{g}{r}$. Note that $Q:=\mvl/S = \OO_C (-Z)$, so $h^0 (Q)=0$ and $\dim (H^0 (L(-Z)) \cap V)= \dim W=r$, which gives condition $iii)$. 
    \end{itemize}
\end{proof}

\subsection{Cohomological stability}

In this section, we consider another type of stability for vector bundles. In \cite{mistretta2012linear} authors studied the relation between this type of stability (that results stronger than the slope-stability) for $\mvl$, the slope-stability of $\mvl$ and linear stability of $(L,V)$.
We present similar results of to those presented by Castorena and Torres-Lopez in \cite{castorena2021new}, we show sketches of the proofs where we emphatize the diferences with our case.

\begin{defi} \label{cohstab}
    Let $E$ be a vector bundle over a curve $C$. We say that $E$ is cohomologically (semi)stable if for any $A \in Pic^a (C)$, and for every $t < rank(E)$, we have that $$h^0 \left( \bigwedge^t E \otimes A  \right) =0$$ whenever $a\leq t \cdot \mu(E)$ ( \text{respectively } $a < t \cdot \mu(E)$).
\end{defi}

In \cite{ein1992stability}, authors show that  cohomological semistability is equivalent to slope-semistability, and that cohomological stability implies slope-stability. We want to look for precise conditions for when such stabilities are equivalent since we know they satisfy the following implications:
\[
\begin{gathered}
\text{Cohomological stability of }\mvl
\Rightarrow \text{ slope stability of }\mvl,\\
\text{Slope stability of }\mvl
\Rightarrow \text{ linear stability of }(L,V).
\end{gathered}
\]

In order to find precise conditions for the equivalence between the first two stabilities, we study the following property for syzygy bundles associated to linear subseries related by divisors.

\begin{lema}[\text{\cite[Lemma 7.4]{mistretta2012linear}}] \label{Lem2}
    Let $(L,V)$ be a $\grd$ on a smooth curve $C$, which induces a birational morphism, and let $D_k = p_1 + ... + p_k$ be a general effective divisor on $C$, with $k<r$. The kernel bundle associated to the linear series lies in the following exact sequence of sheaves $$0 \rightarrow M_{V(-D_k), L(-D_k)} \rightarrow \mvl \rightarrow \bigoplus^k_{i=1} \OO_C (-p_i) \rightarrow 0.$$
\end{lema}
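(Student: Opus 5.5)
The plan is to construct the exact sequence by adding the points of $D_k$ one at a time, using the snake lemma. Set $D_j = p_1 + \dots + p_j$ and $V_j := V(-D_j) = V \cap H^0(L(-D_j))$, with $D_0 = 0$ and $V_0 = V$.

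\textbf{Step 1: codimension and generation.} Since $p_1,\dots,p_k$ are general and $k < r$, each point imposes exactly one condition. So $\dim V_j = r+1-j$, and $V_j \otimes \OO_C \to L(-D_j)$ is surjective for each $j \le k$. For generation, a base point $q$ of $V_j$ on $L(-D_j)$ would be a point with $\dim V(-D_j-q) = \dim V(-D_j)$. Because $\phi_V$ is birational, general points are not in special position: a general point has no other point identified with it and is not a ramification point. A dimension count on the incidence $\{(D_j,q)\}$ should then show that the base locus is empty for general $D_j$. The condition $k<r$ guarantees that $V_j$ still has dimension at least $2$, so $\phi_{V_j}$ is the projection from a general linear space and is a morphism.

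\textbf{Step 2: the one-point step.} Compare the two evaluation sequences
\[
0\to M_{V_{j},L(-D_{j})}\to V_{j}\otimes\OO_C\to L(-D_{j})\to 0,
\]
\[
0\to M_{V_{j+1},L(-D_{j+1})}\to V_{j+1}\otimes\OO_C\to L(-D_{j+1})\to 0.
\]
The inclusions $V_{j+1}\hookrightarrow V_j$ and $L(-D_{j+1})\hookrightarrow L(-D_j)$ give a morphism from the second sequence to the first. The cokernels of the middle and right vertical maps are $\OO_C$ and $\OO_{p_{j+1}}$, and the right vertical map is injective. The snake lemma then gives
\[
0\to M_{V_{j+1},L(-D_{j+1})}\to M_{V_j,L(-D_j)}\to K\to \OO_C\to \OO_{p_{j+1}}\to 0 .
\]
Here $\OO_C\to\OO_{p_{j+1}}$ is induced by evaluating a section of $V_j\setminus V_{j+1}$ at $p_{j+1}$, which is nonzero. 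Hence $K\cong \OO_C(-p_{j+1})$, and
\[
0\to M_{V_{j+1},L(-D_{j+1})}\to M_{V_j,L(-D_j)}\to \OO_C(-p_{j+1})\to 0 .
\]

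\textbf{Step 3: assembling the pieces.} Compose the inclusions to get $M_{V(-D_k),L(-D_k)}\hookrightarrow \mvl$. The quotient $Q$ is filtered with graded pieces $\OO_C(-p_1),\dots,\OO_C(-p_k)$. To show $Q\cong\bigoplus\OO_C(-p_i)$, run Step 2 directly for the single pair $(V(-D_k),L(-D_k))\subset (V,L)$. The cokernels of the middle and right vertical maps are then $(V/V(-D_k))\otimes\OO_C\cong \OO_C^{k}$ and $\OO_{D_k}$. The snake lemma exhibits $Q$ as the kernel of the surjection $\OO_C^k\to\OO_{D_k}=\bigoplus\OO_{p_i}$. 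Choose a basis $s_1,\dots,s_k$ of a complement of $V(-D_k)$ in $V$ with $s_i(p_l)=0$ for $l\ne i$ and $s_i(p_i)\neq0$. This is possible because the $p_i$ impose independent conditions; for instance take $s_i \in V(-D_k+p_i)\setminus V(-D_k)$. In this basis the map $\OO_C^k\to \bigoplus\OO_{p_i}$ is diagonal. Its kernel is therefore $\bigoplus \OO_C(-p_i)$.

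\textbf{Main obstacle.} I expect the main obstacle to be the generation part of Step 1, namely ruling out extra base points of $V(-D_k)$ for general $D_k$. That is exactly where birationality and $k<r$ are needed. I would handle it with a dimension count: the set of $D_k$ for which the projection from the span of $\phi_V(D_k)$ acquires a base point has dimension less than $k$. Near a general point, birationality makes $\phi_V$ an embedding with no secant coincidences.
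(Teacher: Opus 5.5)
The paper gives no proof of this lemma; it is quoted from \cite[Lemma 7.4]{mistretta2012linear}, so your argument has to stand on its own. Its homological half does. Step 3 is the snake lemma for $(V(-D_k),L(-D_k))\subset(V,L)$ together with the adapted basis $s_i\in V(-D_k+p_i)\setminus V(-D_k)$. It correctly identifies the quotient with $\ker\bigl(\OO_C^{k}\to\bigoplus_i\OO_{p_i}\bigr)=\bigoplus_i\OO_C(-p_i)$, and it makes Step 2 unnecessary. (Step 2's display is also not exact at $K$; the snake lemma gives $0\to M_{V_{j+1},L(-D_{j+1})}\to M_{V_j,L(-D_j)}\to\OO_C\to\OO_{p_{j+1}}\to 0$.)

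\textbf{The gap is generation.} Step 3 needs $V(-D_k)\otimes\OO_C\to L(-D_k)$ to be surjective, and this is the real content of the lemma: if $V(-D_k)$ had a base divisor $B$, the quotient would have degree $-k-\deg B$. You only assert surjectivity, and your sketch does not prove it.
\begin{itemize}
\item Your birationality remarks (a general point is unramified and not identified with another point) settle only $k=1$.
\item For $k\ge 2$, write $\phi=\phi_V$ and $\Lambda=\langle\phi(p_1),\dots,\phi(p_k)\rangle$. A base point $q\notin D_k$ means $\phi(q)\in\Lambda$. So you need the span of $k\le r-1$ general points of the curve to contain no other point of it and no tangent line at the $p_i$; this is a generalized trisecant lemma.
\item A parameter count on $\{(D_k,q)\}$ only shows that the \emph{expected} dimension $2k-r$ of the bad locus is $<k$. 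That the actual dimension equals the expected one is exactly what must be proved, and it is not formal; the standard proof uses characteristic zero.
\item $\phi_{V_j}$ is not ``projection from a general linear space'': $\Lambda$ is spanned by points of $\phi(C)$, and $\dim V_j\ge 2$ says nothing about base points.
\end{itemize}

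\textbf{How to close it.} The missing ingredient is the General Position Theorem (Arbarello--Cornalba--Griffiths--Harris, Ch.~III, \S1). Since $\phi$ is birational onto a nondegenerate curve of degree $d$, a general hyperplane $H$ has $\phi^*H$ reduced and mapped bijectively onto $H\cap\phi(C)$, and any $r$ of these $d$ points are linearly independent.

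Next, show that some $H\supset\Lambda$ is general in this sense. The incidence $\{(p_1,\dots,p_k,H):\phi(p_i)\in H\}$, over the open set where the $\phi(p_i)$ are independent, is an irreducible $\mathbb{P}^{r-k}$-bundle of dimension $r$. It maps with finite fibres to $(\mathbb{P}^r)^\vee$. Hence the preimage of the bad hyperplanes has dimension $\le r-1$ and cannot contain the whole fibre over a general $D_k$.

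Now take such an $H$. A point $q\notin D_k$ with $\phi(q)\in\Lambda\subset H$ would give either two points of $\phi^*H$ with the same image, or $k+1\le r$ linearly dependent points of $H\cap\phi(C)$. A base point at $p_i$ would force $\phi^*H\ge D_k+p_i$. Both are impossible. This is precisely where birationality and $k<r$ enter, and with it your Step 3 completes the proof.
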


\begin{remark} \label{rmk1}
\begin{itemize}
    \item With notation and conditions in Lemma \ref{Lem2}, if we consider a general effective divisor $D$ of maximal degree $r-1$, we have that $M_{V(-D), L(-D)}$ is a line bundle which is dual to $\OO_C(p_r+...+p_d)$ and $$M_{V(-D), L(-D)} \cong \OO_C (-p_r - ... - p_d).$$
    \item Let $(L,V)$ and $x_1,...,x_{r-1}$ be as in Lemma \ref{Lem2} and let $F = \oplus^{r-1}_{j=1} \OO_C (-x_j)$. For any integer $t<r$, we get the following short exact sequence of exterior powers 
    \begin{equation} \label{suc1}
        0 \rightarrow \bigwedge^{t-1} F \otimes L^\vee \left( \sum^{r-1}_{j=1} x_j \right) \rightarrow \bigwedge^t \mvl \rightarrow \bigwedge^t F \rightarrow 0.
    \end{equation}
\end{itemize}
\end{remark}

Now, we follow \cite{castorena2021new} in the incomplete case in direction to give conditions for which bundle $\mvl$ is cohomological (semi)stable.

\begin{prop} \label{prop5}
Let $(L,V)$ be a generated $\grd$ on a smooth curve $C$ which induces a birational morphism. Let $A \in Pic^a(C)$ such that $a\leq t\cdot \frac{d}{r}$ and $h^0 (A) \leq t$ with integers $t,d$ and $r$ satisfying $0<t<r<d$. Then $h^0 (\wedge^t \mvl \otimes A) = 0$.
\end{prop}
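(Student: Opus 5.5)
We argue by induction on $t$, using the filtration of Lemma \ref{Lem2} and the exterior power sequence \eqref{suc1} of Remark \ref{rmk1}. This follows \cite{castorena2021new}; the point to check is that nothing there uses $V=H^0(L)$.

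Fix general points $x_1,\dots,x_{r-1}\in C$ and put $D=x_1+\dots+x_{r-1}$ and $F=\bigoplus_{j=1}^{r-1}\OO_C(-x_j)$. Because $(L,V)$ induces a birational morphism, Lemma \ref{Lem2} applies, and Remark \ref{rmk1} gives
\[
0\rightarrow \textstyle\bigwedge^{t-1}F\otimes L^\vee(D)\otimes A \rightarrow \bigwedge^t\mvl\otimes A\rightarrow \bigwedge^tF\otimes A\rightarrow 0 .
\]
It is enough to show that both outer terms have no global sections.

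For the right-hand term, $\bigwedge^tF\otimes A$ is a direct sum of line bundles $A(-x_{j_1}-\dots-x_{j_t})$ with $j_1<\dots<j_t$. Since $h^0(A)\le t$ and the points are general, each of the $t$ points imposes an independent condition on $H^0(A)$ as long as sections remain. Hence $h^0(A(-x_{j_1}-\dots-x_{j_t}))=\max\{h^0(A)-t,0\}=0$. This is where the hypothesis $h^0(A)\le t$ enters, and $t<r$ guarantees there are enough points to choose from.

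For the left-hand term, the summands are $A\otimes L^\vee(D)(-x_{i_1}-\dots-x_{i_{t-1}})$. Each is a line bundle of degree
\[
a-d+(r-1)-(t-1)=a-d+r-t.
\]
Using $a\le td/r$ and $t<r$, we get
\[
a-d+r-t\le -\frac{(r-t)d}{r}+(r-t)=(r-t)\Bigl(1-\frac{d}{r}\Bigr)<0,
\]
because $d>r$. A line bundle of negative degree has no sections, so the left-hand term contributes nothing. Taking global sections of the sequence then gives $h^0(\bigwedge^t\mvl\otimes A)=0$.

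The main obstacle is Lemma \ref{Lem2} and the resulting sequence \eqref{suc1} in the incomplete case. We need that $V(-D)$ has dimension exactly $r+1-(r-1)=2$, and that $M_{V(-D),L(-D)}$ is the line bundle $L^\vee(D)$ up to base points. Without the latter, the left term would be twisted by $L^\vee(D+B)$ for a base divisor $B$, which only lowers the degree further and so leaves the estimate intact. General points impose independent conditions on $V$ by birationality, which is exactly the input Lemma \ref{Lem2} uses. Once this is granted, the remaining steps are the two routine checks above.
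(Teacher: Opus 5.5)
Your proof is correct and is essentially the paper's argument (the paper defers to \cite[Proposition 3.4]{castorena2021new}): twist \eqref{suc1} by $A$, kill $h^0(\bigwedge^t F\otimes A)$ using $h^0(A)\le t$ and the generality of the $x_j$, and kill the left-hand term because each of its line-bundle summands has degree $a-d+r-t\le (r-t)(1-d/r)<0$. One correction to your closing aside: if $V(-D)$ had a base divisor $B$, the kernel would be $L^\vee(D+B)$, whose degree is \emph{larger} by $\deg B$, and the quotient would no longer be $F$, so your estimate would \emph{not} survive. This, and not the fact that general points impose independent conditions (which holds for any linear series), is where birationality is needed: it enters through Lemma \ref{Lem2}, which guarantees that $V(-D)$ generates $L(-D)$ for general $D$, so that no such $B$ occurs.
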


\begin{proof}
    This proof is analogous to \cite[Proposition 3.4]{castorena2021new}.

\end{proof}

From now on, let us assume that $C$ is a general curve. We aim to establish a bound on the dimension of $H^0(A)$ for a line bundle $A$, as above.

\begin{remark}
    It follows from Riemman-Roch formula that hypothesis $$c:=codim_{H^0 (L)} (V) \leq h^1(L)$$ is equivalent to condition $d\leq g+r$.
\end{remark}

\begin{prop} \label{prop6}
    Let $C$ be a general curve of genus $g$. Let $A \in Pic^a (C)$ with $a \leq t \cdot \frac{d}{r}$ with integers $t,d$ and $r$ satisfying $0<t<r<d \leq g+r$, then $h^0 (A) \leq t+1$. Moreover, $h^0(A) = t+1$ if and only if $a=t\cdot \frac{d}{r}$, $d=g+r$ and $t+1=r$.
\end{prop}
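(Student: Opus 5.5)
The plan is to run the Brill--Noether argument of Proposition \ref{Prop2} and Proposition \ref{Prop3} again, this time applied to $A$. We use only that on a general curve $C$ of genus $g$ the locus $W^n_a(C)$ is empty whenever
$$\rho(g,n,a)=g-(n+1)(g-a+n)<0.$$
Equivalently, any line bundle of degree $a$ with $h^0\geq n+1$ must satisfy
$$a\;\geq\; g+n-\frac{g}{n+1}\;=\;n+\frac{gn}{n+1}.$$
If $h^0(A)=0$ there is nothing to prove, so we assume $A$ is effective.

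\textbf{Upper bound.} We argue by contradiction and suppose $h^0(A)\geq t+2$. Taking $n=t+1$ gives the lower bound $a\geq t+1+\frac{g(t+1)}{t+2}$. On the other hand, the hypotheses $a\leq t\frac{d}{r}$ and $d\leq g+r$ give the upper bound
$$a\leq \frac{t(g+r)}{r}=t+\frac{gt}{r}.$$
Since $t<r$ we have $t\leq r-1$, hence $\frac{t}{r}\leq\frac{t}{t+1}<\frac{t+1}{t+2}$. Therefore
$$t+\frac{gt}{r}\;<\;t+1+\frac{g(t+1)}{t+2},$$
which contradicts the two bounds on $a$. Thus $h^0(A)\leq t+1$.

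\textbf{Equality case.} Suppose $h^0(A)=t+1$. Taking $n=t$ gives $a\geq t+\frac{gt}{t+1}$. Combining this with the upper bound yields the chain
$$t+\frac{gt}{t+1}\;\leq\; a\;\leq\; \frac{td}{r}\;\leq\; t+\frac{gt}{r}\;\leq\; t+\frac{gt}{t+1}.$$
All of these must therefore be equalities. The equality $a=\frac{td}{r}$ is immediate. The middle equality $\frac{td}{r}=\frac{t(g+r)}{r}$ forces $d=g+r$, using $t>0$. The last equality $\frac{gt}{r}=\frac{gt}{t+1}$ forces $r=t+1$, using $g\geq 1$.

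For the converse, assume $a=t\frac{d}{r}$, $d=g+r$ and $r=t+1$. Then $a=t+\frac{gt}{t+1}$, which is exactly the degree for which $\rho(g,t,a)=0$. By the existence part of Brill--Noether theory, $W^t_a(C)$ is then nonempty, and on a general curve it is a finite set of line bundles with exactly $t+1$ sections.

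The main subtlety is this converse direction. Read literally for an arbitrary $A$, the ``if'' cannot hold: a general $A$ of that degree has fewer sections. I would therefore state it in the form that the numerical conditions are necessary for equality, and exactly in that case there exist $A$, namely the finitely many points of $W^t_a(C)$, attaining $h^0(A)=t+1$. I would also note that the equality argument needs $g\geq 1$, and that integrality of $a$ is automatic because $a$ is a degree; this matches the divisibility $r\mid g$ appearing in Proposition \ref{Prop2}.
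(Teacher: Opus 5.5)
Your proof is correct and follows the paper's approach. The paper's own proof only defers to \cite[Proposition 3.6]{castorena2021new}, which is the same Brill--Noether count used in Propositions \ref{Prop2} and \ref{Prop3}: $\rho(g,t+1,a)<0$ excludes $h^0(A)\geq t+2$, and $\rho(g,t,a)\geq 0$, squeezed against $a\leq \frac{t(g+r)}{r}\leq t+\frac{gt}{t+1}$, forces $a=\frac{td}{r}$, $d=g+r$ and $r=t+1$.

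Your caveat on the converse is right and points to a genuine defect in the statement as written. A general $A$ of degree $g+r-1-\frac{g}{r}$ has $h^0(A)=\max\{0,r-\frac{g}{r}\}<r$, so the ``if'' direction holds only in your existential form via $\rho(g,t,a)=0$. Finally, the condition $g\geq 1$ you flag is automatic from $r<d\leq g+r$.
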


\begin{proof}
    This proof is analogous to \cite[Proposition 3.6]{castorena2021new}.
\end{proof}

To establish the cohomological semistability of $\mvl$, notice that from Proposition \ref{Prop2} and the results presented in \cite{ein1992stability}, cohomological semistability is equivalent to slope-semistability. This leads us to conclude that $\mvl$ is cohomologically semistable. We aim to characterize the conditions under which this cohomological semistability holds. Specifically, we analyze the implications of the Brill-Noether numbers associated with $A$, as well as the dimensions of the relevant cohomology groups, to provide a comprehensive understanding of the stability properties of the sheaf.

\begin{prop} \label{teo3}
    Let $(L,V)$ be a generated linear series of type $(d,r+1)$ which induces a birational morphism over a smooth general curve $C$. Then $\mvl$ is cohomologically semistable.
\end{prop}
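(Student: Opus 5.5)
The plan is to verify Definition \ref{cohstab} in its semistable form directly. Fix $0<t<r$ and $A\in Pic^a(C)$ with $a<t\cdot\mu(\mvl)$, and show $h^0(\bigwedge^t\mvl\otimes A)=0$. Since $\mu(\mvl)=-d/r$ and $\bigwedge^t\mvl^\vee\cong\bigwedge^{r-t}\mvl\otimes L$, I will first rewrite the condition on $\bigwedge^t\mvl\otimes A$ in the normalization used in Propositions \ref{prop5} and \ref{prop6}, where $A$ satisfies $a\le t\cdot\frac dr$. This conversion (sign of the slope, passing $t\leftrightarrow r-t$ by duality) is bookkeeping only, and I will carry it out first so that the hypothesis on $A$ matches those propositions.

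If $d\le g+r$, Proposition \ref{prop6} gives $h^0(A)\le t+1$. Equality forces $a=t\frac dr$, which is excluded because the semistable version of Definition \ref{cohstab} uses a strict inequality. Hence $h^0(A)\le t$, and Proposition \ref{prop5} (birationality, $0<t<r<d$) yields $h^0(\bigwedge^t\mvl\otimes A)=0$. In the boundary cases $t+1=r$ and $d=g+r$ the strictness is exactly what is needed, and this matches the strictly semistable situation described in Proposition \ref{condest}.

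If $d>g+r$ (that is, $c>h^1(L)$), Proposition \ref{prop6} is unavailable. Here I would instead use the equivalence noted before the statement. By \cite{ein1992stability}, cohomological semistability is equivalent to slope semistability, so it suffices to prove that $\mvl$ is slope-semistable. For $c\le h^1(L)$ this is Proposition \ref{Prop2}. For larger $d$ I would rerun the argument of Proposition \ref{Prop2}: a subbundle $S$ of rank $s$ gives $h^0(\det S^\vee)\ge s+1$, and Brill--Noether on the general curve gives $\mu(S)\le -1-\frac{g}{s+1}$. This must then be compared with $-d/r$.

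The main obstacle is this last comparison. When $d$ is large, $-d/r$ is very negative and the Brill--Noether bound no longer suffices by itself. There I would attempt to invoke the generality of $C$ together with the filtration of Lemma \ref{Lem2} and Remark \ref{rmk1}, inducting on $t$ through the sequence \eqref{suc1}. The idea is to bound $h^0$ of the sub and quotient terms, $\bigwedge^{t-1}F\otimes L^\vee(\sum x_j)\otimes A$ and $\bigwedge^t F\otimes A$, whose summands are line bundles of degree $a-(t-1)-d+(r-1)$ and $a-t$, using general points $x_j$ to kill sections. If this fails, I would restrict the claim to the range $d\le g+r$ implicitly assumed throughout the section.
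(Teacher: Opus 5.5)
In the range $d\le g+r$ your argument is exactly the paper's. Proposition \ref{prop6}, with the strict inequality $a<t\frac dr$, excludes $h^0(A)=t+1$, and Proposition \ref{prop5} then gives $h^0(\bigwedge^t\mvl\otimes A)=0$. The only normalization needed is the sign: the intended condition is $a<t\frac dr=-t\mu(\mvl)$, and no $t\leftrightarrow r-t$ duality is involved.

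The branch $d>g+r$, however, is not merely hard. The statement is false there, so neither the Brill--Noether comparison nor the filtration of Lemma \ref{Lem2} can succeed. Here is a counterexample on a general curve $C$ of genus $g\ge 2$.
\begin{enumerate}
\item Take a base-point-free pencil $A$ of degree $k=\mathrm{gon}(C)\le g+1$ and an effective divisor $Z$ of degree $m\ge 2g>k$, defined by $\sigma_Z\in H^0(\OO_C(Z))$.
\item Put $L=A(Z)$ and $V=\sigma_Z\cdot H^0(A)+\langle\sigma\rangle$ with $\sigma\in H^0(L)$ general.
\item Then $V$ generates $L$: the pencil does so away from $Z$, and $\sigma$ does so on $Z$. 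Since $L$ is very ample, for general $\sigma$ the induced map to $\mathbb{P}^2$ is birational. Here $r=2$ and $d=k+m>g+2$.
\item The kernel of $\sigma_Z H^0(A)\otimes\OO_C\to L$ is $M_{H^0(A),A}\cong A^{-1}$, and it sits inside $\mvl$.
\item So for $t=1$ and this $A$, whose degree is $k<\frac d2$, you get $h^0(\mvl\otimes A)\ge h^0(\OO_C)=1$. Equivalently, $\mu(A^{-1})=-k>-\frac d2=\mu(\mvl)$.
\end{enumerate}

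Hence the hypothesis $c\le h^1(L)$, i.e.\ $d\le g+r$, is genuinely necessary. The paper's proof uses it tacitly through Proposition \ref{prop6}, even though the statement omits it. Your fallback of restricting to this range is the correct fix, not a last resort, and the $d>g+r$ paragraph should be replaced by this observation.
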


\begin{proof}
    This proof is analogous to \cite[Theorem 3.7]{castorena2021new}, let $t<r$ and consider $A \in Pic^a (C)$ with $a<t \frac{d}{r}$. From Propositions \ref{prop5} and \ref{prop6} $h^0 (A) \leq t$ and $h^0 (\wedge^t \mvl \otimes A) = 0$. Hence $\mvl$ is cohomologically semistable.
\end{proof}

As for slope-semistability, we want to find suitable conditions for the cohomological stability of $\mvl$. A first step in this direction is the following result.

\begin{cor} \label{cor2}
    Let $(L,V)$ be a generated linear series of type $(d,r+1)$ which induces a birational morphism over a general curve $C$ with $c \leq h^1 (L)$. Then $\mvl$ is cohomologically stable if one of the following conditions holds:
    \begin{itemize}
        \item[$i)$] $c<h^1 (L)$.
        \item[$ii)$] $c=h^1 (L)$ and $r$ does not divide g.
    \end{itemize}
\end{cor}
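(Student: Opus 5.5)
We verify the definition of cohomological stability (Definition \ref{cohstab}) directly. Fix an integer $0<t<r$ and a line bundle $A\in Pic^a(C)$ with $a\leq t\cdot\mu(\mvl)$-type bound, that is $a\leq t\frac{d}{r}$. We must show $h^0(\wedge^t\mvl\otimes A)=0$. The strategy is to reduce to Proposition \ref{prop5}. That proposition gives the vanishing as soon as $h^0(A)\leq t$, so everything hinges on controlling $h^0(A)$ via Proposition \ref{prop6}.

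The hypothesis $c\leq h^1(L)$ is equivalent to $d\leq g+r$ by the Remark preceding Proposition \ref{prop6}. Also $0<t<r$, and $r<d$ holds because $(L,V)$ induces a birational morphism; the case $r=1$ is vacuous since then no $t$ exists. So Proposition \ref{prop6} applies and gives $h^0(A)\leq t+1$, with equality only if three conditions hold simultaneously: $a=t\frac{d}{r}$, $d=g+r$ and $t+1=r$.

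We then split according to the two hypotheses.

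In case $i)$, $c<h^1(L)$ is equivalent via Riemann--Roch to $d<g+r$. Indeed, $h^0(L)=d-g+1+h^1(L)$ and $c=h^0(L)-r-1$, so $c-h^1(L)=d-g-r$. Hence the equality case of Proposition \ref{prop6} cannot occur, and $h^0(A)\leq t$.

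In case $ii)$, we have $d=g+r$. Suppose equality $h^0(A)=t+1$ held. Then $t=r-1$ and $a=(r-1)\frac{g+r}{r}=r-1+g-\frac{g}{r}$. This is an integer only if $r\mid g$, which contradicts the hypothesis. So again $h^0(A)\leq t$.

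In both cases Proposition \ref{prop5} yields $h^0(\wedge^t\mvl\otimes A)=0$ for all admissible $t$ and $A$, which is cohomological stability.

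The main point requiring care is the precise translation of $c$ versus $h^1(L)$ into $d$ versus $g+r$, together with the integrality argument in case $ii)$. I would also check that the stable version of the definition, with the non-strict inequality $a\leq t\mu$, is exactly the range covered by Propositions \ref{prop5} and \ref{prop6}. It is, since both are stated with $a\leq t\frac{d}{r}$, so no further obstacle is expected; the rest is routine bookkeeping.
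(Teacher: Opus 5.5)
Your proposal is correct and is essentially the paper's own argument: bound $h^0(A)\le t$ via Proposition \ref{prop6} and conclude with Proposition \ref{prop5}, using $c<h^1(L)\iff d<g+r$ in case $i)$ and the non-integrality of $(r-1)\frac{g+r}{r}$ when $r\nmid g$ in case $ii)$. Your reading of the bound as $a\le t\frac{d}{r}$ (i.e.\ $a\le -t\,\mu(\mvl)$) is the one the paper actually uses in Propositions \ref{prop5} and \ref{prop6}, despite the sign in Definition \ref{cohstab}.
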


\begin{proof}
    In a similar way as in \cite[Corollary 3.8]{castorena2021new} for $t<r$ and $A \in Pic^a(C)$ with $a \leq \frac{d}{r}t$. In case $i)$, since $c<h^1 (L)$ then $d<g+r$. From Proposition \ref{prop6} $h^0 (A) \leq t$ and case $i)$ follows from Proposition \ref{prop5}. 
    
    If $h^1(L)=c$ and $t=r-1$, then $d=g+r$ and $$t \frac{d}{r} = (r-1)\frac{g+r}{r} = g+r-1-\frac{g}{r}.$$ Assume that $r$ does not divide $g$ then the condition $a \leq (r-1)\frac{d}{r}$ implies $a<(r-1)\frac{d}{r}$ and hence $h^0 (A) \leq t=r-1$. This proves $ii)$.
\end{proof}

Next result states that the slope-stability of $\mvl$ is equivalent to its cohomological stability. To achieve this, we generalize the results presented in \cite{castorena2021new} to the setting of the sheaf $\mvl$. We show that the conditions under which $\mvl$ is slope-stable imply its cohomological stability, and viceversa. This equivalence allows us to leverage the powerful tools of Brill-Noether theory and the study of Brill-Noether varieties to draw conclusions about the stability properties of $\mvl$.

\begin{teo} \label{Teocohst}
    Let $(L,V)$ be a generated $\grd$ over a general curve $C$ which induces a birational morphism  with $c\leq h^1(L)$. Then
    \begin{enumerate}
        \item $\mvl$ is strictly slope-semistable if and only if the following three conditions hold:
        \begin{enumerate}
            \item $h^1(L)=c$.
            \item $d=g+r$ and $r|g$.
            \item There is a line bundle $A$ with degree $deg(A)=g+r-1-\frac{g}{r}$ and $h^0(A)=r$ such that $h^0(\wedge^{r-1} \mvl \otimes A) =1$.
        \end{enumerate}
        \item If $\mvl$ is slope-stable, then $\mvl$ is cohomologically stable.
    \end{enumerate}
\end{teo}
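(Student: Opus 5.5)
We reduce part (1) to Proposition \ref{condest} and the cohomological estimates of Propositions \ref{prop5} and \ref{prop6}, following \cite[Theorem 3.9]{castorena2021new}. Part (2) then follows from (1) together with Corollary \ref{cor2}.

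For the forward direction of (1), suppose $\mvl$ is strictly slope-semistable. By Proposition \ref{condest}, conditions (a) and (b) hold, and there is an effective divisor $Z$ of degree $1+\frac{g}{r}$ with $\dim V(-Z)=r$. The proof of Proposition \ref{condest} also gives an exact sequence
$$0\to S\to \mvl\to \OO_C(-Z)\to 0,$$
where $S=M_{V(-Z),L(-Z)}$ has rank $r-1$. We take $A:=L(-Z)$. Then
$$\deg A=g+r-1-\frac{g}{r}=(r-1)\frac{d}{r},$$
and $h^0(A)=h^0(L)-1$. Since $c=h^1(L)$, Riemann--Roch gives $h^0(L)=r+1$, so $h^0(A)=r$. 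Next we use $\wedge^{r-1}\mvl\cong \mvl^\vee\otimes \det\mvl=\mvl^\vee\otimes L^\vee$, so that
$$\wedge^{r-1}\mvl\otimes A\cong \mvl^\vee(-Z).$$
Dualizing the sequence above and twisting by $\OO_C(-Z)$ gives
$$0\to \OO_C\to \mvl^\vee(-Z)\to S^\vee(-Z)\to 0.$$
This yields $h^0\ge 1$. For the reverse inequality, $S^\vee(-Z)$ is a semistable bundle (as in Proposition \ref{Prop2} applied to the subseries) of slope $\mu(S^\vee)-\deg Z=\frac{d}{r}-1-\frac gr=0$. Alternatively, we can apply Proposition \ref{prop6} together with the dual Butler sequence for $S$ to bound its sections. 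If this gives $h^0(S^\vee(-Z))=0$, we get $h^0=1$ exactly. This is the delicate point: the semistable-of-degree-$0$ argument only gives $h^0\le \operatorname{rank}$, so we will instead compute $H^0(\mvl^\vee(-Z))$ via the Butler sequence $0\to L^\vee(-Z)\to V^\vee\otimes\OO_C(-Z)\to\mvl^\vee(-Z)\to0$. We have $H^0(V^\vee(-Z))=0$, and the kernel of $V^\vee\otimes H^1(\OO(-Z))\leftarrow H^1(L^\vee(-Z))$ is dual to the cokernel of the multiplication $V\otimes H^0(K_C(Z))\to H^0(K_C\otimes L)$. Since $Z$ is not in the base locus of $K_C$ generically, $h^0(K_C(Z))=g-1+\deg Z-g+1$ may be computed directly. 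We expect this count to be the main obstacle.

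For the converse of (1), assume (a), (b) and (c). Slope-semistability is already given by Proposition \ref{Prop2}. With $t=r-1$ and $a=t\frac dr$, a nonzero section of $\wedge^{r-1}\mvl\otimes A\cong\mvl^\vee\otimes L^\vee\otimes A$ gives a nonzero map $\mvl\to L^\vee\otimes A$ onto a line bundle of degree $\le a-d=-\frac dr=\mu(\mvl)$. Its kernel, saturated, is a subbundle of rank $r-1$ and slope $\ge\mu(\mvl)$. By semistability this is an equality, so $\mvl$ is strictly semistable.

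For (2), assume $\mvl$ is slope-stable. If $c<h^1(L)$, or if $r\nmid g$, Corollary \ref{cor2} already gives cohomological stability. Otherwise $d=g+r$ with $r\mid g$. Take $t<r$ and $a\le t\frac dr$. By Proposition \ref{prop6}, $h^0(A)\le t$ unless $t=r-1$, $a=(r-1)\frac dr$ and $h^0(A)=r$; in the first case Proposition \ref{prop5} gives vanishing. In the remaining case, a nonzero section of $\wedge^{r-1}\mvl\otimes A$ produces, exactly as in the converse of (1), a subbundle of slope $\mu(\mvl)$, which contradicts stability. Hence $h^0=0$ in all cases, and $\mvl$ is cohomologically stable.
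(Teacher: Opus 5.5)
Your converse in (1), where a nonzero section gives a map $\mvl\to L^\vee\otimes A$ whose kernel has slope $\ge\mu(\mvl)$, is correct, and so is your argument for (2). The forward direction of (1), however, has a genuine gap exactly where you flag it. From $\OO_C\hookrightarrow\mvl^\vee(-Z)$ you only get $h^0(\wedge^{r-1}\mvl\otimes A)\ge 1$, and the upper bound is never proved.

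The Butler-sequence route you sketch is not carried out, and it is mis-set. The relevant cokernel is that of $V\otimes H^0(K_C(Z))\to H^0(K_C\otimes L(Z))$, not $H^0(K_C\otimes L)$, and $h^0(K_C(Z))=g-1+\deg Z$. Bounding that cokernel is a Petri-type problem you leave open. The missing idea is that the bound comes from the same filtration that proves Proposition \ref{prop5}, now with $t=r-1$. Take $r-1$ general points $x_1,\dots,x_{r-1}$; this is where birationality enters, via Lemma \ref{Lem2}. The sequence \eqref{suc1} twisted by $A$ reads
\[
0\to\bigoplus_{k=1}^{r-1}L^\vee\otimes A(x_k)\to\wedge^{r-1}\mvl\otimes A\to A(-x_1-\cdots-x_{r-1})\to 0 .
\]
The summands on the left have degree $\deg A-d+1=1-\frac dr<0$, so they have no sections. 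Given $h^0(A)=r$ (see below), the right-hand term has exactly $h^0(A)-(r-1)=1$ section because the points are general. Hence $h^0(\wedge^{r-1}\mvl\otimes A)\le 1$, and together with your lower bound this gives equality.

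There is a second error, in your proof that $h^0(A)=r$. With $c=h^1(L)$ and $d=g+r$, Riemann--Roch gives $h^0(L)=d-g+1+h^1(L)=r+1+c$, not $r+1$. The value $r+1$ holds only in the complete case $V=H^0(L)$, which is exactly the case this paper is trying to go beyond. If you feed the correct value into $h^0(L(-Z))=h^0(L)-1$, you get $h^0(A)=r+c$. For $c>0$ this contradicts the bound $h^0(A)\le t+1=r$ of Proposition \ref{prop6}, so that identity cannot be used here.

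Argue instead as follows: $h^0(A)\ge\dim V(-Z)=r$ because $V(-Z)\subset H^0(A)$, and $h^0(A)\le r$ by Proposition \ref{prop6}. Alternatively, $h^0(A)\le r-1$ would force $h^0(\wedge^{r-1}\mvl\otimes A)=0$ by Proposition \ref{prop5}, contradicting the section you exhibited. With these two repairs your outline proves the statement.
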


\begin{proof}
    The proof is the same as in \cite[Corollary 3.10]{castorena2021new}.
\end{proof}

It follows from Proposition \ref{Prop2} and Theorem \ref{cohstab} that for general curves satisfying assumptions of Theorem \ref{cohstab}, the linear stability of the pair $(L,V)$ is equivalent to the slope-stability of $\mvl$, and this slope-stability is equivalent to the cohomological stability of $\mvl$.

\section{Stability conditions}

In this chapter, we examine the fundamental concepts of stability conditions on the bounded derived category $D^b (X)$ of coherent sheaves on a smooth projective variety $X$. The translation functor on $D^b (X)$ denoted by $[1]$.

A key ingredient in our analysis is the construction of a certain abelian subcategory $Coh^\beta (X)$ within $D^b (X)$, which consists of two-term complexes. This abelian category $Coh^\beta (X)$ depends on the choice of a real parameter $\beta$.

To establish the necessary foundations for working with $Coh^\beta (X)$, first we need to introduce some basic notions from homological algebra and the theory of stability conditions. These concepts provide the framework for our subsequent results of how stability conditions on $D^b (X)$ can be utilized to study the stability of vector bundles and their restrictions on curves.

\subsection{The heart of coherent sheaves}

We recall the slope of a coherent sheaf $E$ shifted by $\beta$: 

\begin{equation} \label{defimu}
    \mu_\beta (E) \colon = \begin{cases} 
    \frac{H.c_1 (E)}{rk(E)} - \beta & \text{if } rk(E)>0 \\
    +\infty & \text{otherwise} 
    \end{cases}
\end{equation} 

\begin{defi}
We say that $E \in Coh(X)$ is $\mu_\beta$-(semi)stable if for all subsheaves $A \subsetneq E$, we have $\mu_\beta (A) < (\leq) \mu_\beta (E/A)$
\end{defi}

The introduction of the slope function $\mu_\beta$ generalizes the classical notion of slope that depends solely on the hyperplane section $H$. By definition, when $\beta=0$, the slope $\mu_\beta$ reduces to the standard slope on the abelian category $Coh(X)$. This enrichment of the category of coherent sheaves $Coh(X)$ with the parameter $\beta \in \mathbb{R}$ allows us to establish a broader set of properties that extend the well-known results obtained using the classical slope. These generalized properties play a crucial role in our analysis of the stability conditions on the K3 surface $X$ and the associated syzygy bundles.

\begin{prop}
\begin{itemize}
    \item Every sheaf $E$ has a (unique and functorial) Harder-Narasimhan filtration (HN-filtration): 
    $$0 = E_0 \subset E_1 \subset E_2 \subset ... \subset E_m=E$$ 
    of coherent sheaves where $E_i / E_{i+1}$ is $\mu_\beta$-semistable for $1 \leq i \leq m$, and with $$\mu^+_\beta (E) \colon = \mu_\beta (E_1/E_0) > \mu_\beta (E_2/E_1) > ... > \mu_\beta (E_m / E_{m-1}) = \colon \mu_\beta^- (E)$$
    \item If $E,F$ are slope-semistable with $\mu_\beta (E) > \mu_\beta (F)$, then $Hom(E,F)=0$
\end{itemize}
\end{prop}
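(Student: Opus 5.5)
The proof follows the classical argument for Mumford slope stability on a smooth projective surface $X$ with polarization $H$. We only need to check that shifting the slope by the constant $\beta$ changes nothing. For torsion-free sheaves, $\mu_\beta(E)=\mu_H(E)-\beta$, where $\mu_H$ is the usual slope $H.c_1(E)/rk(E)$. Since $\beta$ is a uniform constant, the order relations between slopes are unchanged. Hence $\mu_\beta$-(semi)stability coincides with $\mu_H$-(semi)stability whenever both sheaves have positive rank.

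The definition uses the see-saw form $\mu_\beta(A)<(\leq)\,\mu_\beta(E/A)$. For torsion sheaves the slope is set to $+\infty$. This makes torsion sheaves semistable of slope $+\infty$, and it forces a semistable sheaf of positive rank to be torsion-free. The reason is that a torsion subsheaf $A$ has $\mu_\beta(A)=+\infty$, which cannot be bounded by $\mu_\beta(E/A)$ when $E/A$ has positive rank.

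\textbf{Existence of the HN filtration.} The plan is to first split off the torsion part $T(E)\subset E$, which is the first step of the filtration with slope $+\infty$. Then I apply the standard HN filtration for $\mu_H$ to the torsion-free quotient $E/T(E)$ and pull it back to $E$. For torsion-free $F$, the usual ingredients are these.
\begin{enumerate}
\item The set $\{H.c_1(A): A\subset F\}$ is bounded above. This follows from Grothendieck's lemma, or from bounding subsheaves of a sheaf embedded in a direct sum of line bundles $\OO(nH)$.
\item $H.c_1$ takes values in the discrete set $\frac{1}{H^2\cdot N}\mathbb{Z}$ (in fact in $\mathbb{Z}$ on a K3), and ranks are bounded. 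So the maximal slope $\mu_{\max}$ is attained.
\item Among the subsheaves attaining $\mu_{\max}$ there is a unique one of maximal rank, the maximal destabilizing subsheaf $F_1$, which we may take saturated. It is semistable, and $F/F_1$ is torsion-free with $\mu_{\max}(F/F_1)<\mu(F_1)$.
\end{enumerate}
Induction on rank then gives the filtration with strictly decreasing slopes. Uniqueness follows from the characterization of each $E_1$ as the maximal destabilizing subsheaf. Functoriality follows from the Hom-vanishing of the second bullet, applied to the graded pieces.

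\textbf{Hom-vanishing.} Let $f:E\to F$ be nonzero and put $I=\operatorname{im} f$. As a quotient of $E$, semistability gives $\mu_\beta(I)\geq\mu_\beta(E)$. This uses the see-saw: if $K=\ker f$, then $\mu_\beta(K)\le\mu_\beta(E/K)$, which implies $\mu_\beta(E)\le\mu_\beta(I)$. As a subsheaf of $F$, we have $\mu_\beta(I)\le\mu_\beta(F)$, arguing again via the see-saw with $F/I$ when $I\ne F$. Combining the two, $\mu_\beta(E)\le\mu_\beta(F)$, which contradicts $\mu_\beta(E)>\mu_\beta(F)$.

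\textbf{Main obstacle.} The point to watch is the bookkeeping with $+\infty$ and rank-zero quotients in the see-saw inequalities. In particular, the case $I=F$ needs separate treatment, as do cases where $I$ or $F/I$ is torsion. I would handle these first by noting that the see-saw for $\mu_\beta$ is equivalent to the subsheaf form $\mu_\beta(A)\le\mu_\beta(E)$ when $E/A$ has positive rank. For the boundedness in existence, I would simply cite the standard result (e.g. Huybrechts–Lehn).
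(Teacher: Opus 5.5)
The paper gives no proof of this proposition. It is recalled as standard background, taken from Bayer's notes and ultimately from the Mumford--Takemoto theory in Huybrechts--Lehn. Your argument is the standard route and it is sound: split off $T(E)$ as the slope-$+\infty$ step, use that $\mu_\beta$ and $\mu_H$ differ by a constant on sheaves of positive rank, pull back the classical HN filtration of $E/T(E)$, and prove Hom-vanishing through $I=\operatorname{im} f$.

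One case you flagged is not covered by the fix you propose. It is the case $I\subsetneq F$ with $F/I$ a nonzero torsion sheaf. Since $\mu_\beta(F)<\mu_\beta(E)\le+\infty$, the sheaf $F$ has positive rank. It is therefore torsion-free, so $I\neq 0$ has $\operatorname{rk} I=\operatorname{rk} F>0$. Semistability of $F$ then gives only the vacuous inequality $\mu_\beta(I)\le\mu_\beta(F/I)=+\infty$. Your equivalence with the subsheaf form applies only when the quotient has positive rank.

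The missing input is positivity of torsion sheaves. For any torsion sheaf $T$ on the surface, $c_1(T)$ is the effective cycle of the one-dimensional part of its support, counted with multiplicities. Since $H$ is ample, $H\cdot c_1(T)\ge 0$. Then $H\cdot c_1(F)=H\cdot c_1(I)+H\cdot c_1(F/I)\ge H\cdot c_1(I)$, and with equal ranks this gives $\mu_\beta(I)\le\mu_\beta(F)$.

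The same positivity is what justifies ``we may take $F_1$ saturated'' in your existence step, which you are citing anyway. A minor remark: $H\cdot c_1$ is an integer on any smooth projective surface with integral $H$, not only on a K3, so your discreteness claim holds in general.
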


We use the existence of Harder-Narasimhan filtrations for $\mu_\beta$ to construct a specific torsion pair that decomposes the abelian category into two pieces. This torsion pair, denoted by $(T^\beta, F^\beta)$, is crucial for our analysis as it allows us to study the stability of objects within the category. By exploiting the structure of the torsion pair, we can relate the existence of Harder-Narasimhan filtrations to the stability properties of objects in the category, providing a deeper understanding of the stability conditions.

\begin{align*}
    T^\beta & = \{ E \in Coh (X): \mu^-_\beta (E) > 0 \} 
    \\ & = \{ E \in Coh (X) : \text{ all HN-factors of } E \text{ satisfy } \mu_\beta (\cdot) > 0 \} 
    \\ & = \{ E \in Coh (X) : \text{ all quotients } E \rightarrow Q \rightarrow 0 \text{ satisfy } \mu_\beta (Q) > 0\} 
    \\ & = \langle E \in Coh (X) : E \text{ is slope-stable with } \mu_\beta (E)>0 \rangle 
    \\ F^\beta & = \{ E \in Coh(X) : \mu^+_\beta (E) \leq 0 \} 
    \\ & = \{ E \in Coh(X) : \text{ all HN-factors of } E \text{ satisfy } \mu_\beta (\cdot) \leq 0 \} 
    \\ & = \{ E \in Coh(X) : \text{ all subobjects } 0 \rightarrow A \rightarrow E \text{ satisfy } \mu_\beta (A) \leq 0  \} 
    \\ & = \langle E \in Coh(X) : E \text{ is slope-stable with } \mu_\beta (E) \leq 0 \rangle 
\end{align*}
Here, the notation $\langle \cdot \rangle$ denotes the smallest subcategory of $Coh(X)$ that includes the given objects and is closed under extensions.

\begin{remark}
The pair $(T^\beta , F^\beta)$ is a torsion pair, i.e: 
\begin{itemize}
    \item For $T \in T^\beta , F \in F^\beta$, we have $Hom(T,F)=0$
    \item Each $E \in Coh(X)$ fits into a (unique and functorial) short exact sequence $$0 \rightarrow T(E) \rightarrow E \rightarrow F(E) \rightarrow 0$$
    with $T(E) \in T^\beta , F(E) \in F^\beta$
    \end{itemize}
\end{remark}

With the aid of a torsion pair, we can use homological algebra tools, such as tilting, to construct a new abelian subcategory $\Aab$ of the bounded derived category $D^b (X)$. This abelian subcategory $\Aab$, known as the heart of the torsion pair, captures the essential information of the objects in $D^b (X)$ or their translations. An important property of this heart $\Aab$ is that its associated Grothendieck group coincides with the Grothendieck group of the entire derived category $D^b (X)$.

\begin{prop}[\text{\cite[Corollary 2.2]{happel1996tilting}}] \label{Cohbeta}
The following (equivalent) characterization define an abelian subcategory of $D^b (X)$:
\begin{align*}
    Coh^\beta (X)  & = \langle T^\beta , F^\beta [1 ] \rangle \\ & = \{ E \in D^b (X) : H^0 (E) \in T^\beta , H^{-1} (E) \in F^\beta , H^i (E) = 0 \text{ for } i \neq 0,1 \} \\ & = \{ E \in D^b (X) : E \cong (F_{-1} \xrightarrow{d} F_0 ) , ker(d) \in F^\beta , coker(d) \in T^\beta  \}
\end{align*}
\end{prop}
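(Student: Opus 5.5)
The plan is to prove the statement (Happel–Reiten–Smalø tilting) in two stages. First I show that the three descriptions of $Coh^\beta(X)$ agree. Then I show that the resulting full subcategory is the heart of a bounded t-structure on $D^b(X)$, which makes it abelian.

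For the equivalence of the descriptions, start with an object $E$ of the middle description. It has cohomology only in degrees $-1$ and $0$, so the truncation triangle
\[
H^{-1}(E)[1] \rightarrow E \rightarrow H^0(E) \rightarrow H^{-1}(E)[2]
\]
exhibits $E$ as an extension of $H^0(E)\in T^\beta$ by $H^{-1}(E)[1]\in F^\beta[1]$. Hence $E\in\langle T^\beta,F^\beta[1]\rangle$. The index range $i\neq 0,1$ in the statement should be read as $i\neq -1,0$.

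Conversely, the long exact cohomology sequence of a triangle $A\to E\to B\to A[1]$ with $A,B$ in the middle class gives the exact sequence
\[
0\to H^{-1}(A)\to H^{-1}(E)\to H^{-1}(B)\to H^0(A)\to H^0(E)\to H^0(B)\to 0 .
\]
The connecting map $H^{-1}(B)\to H^0(A)$ goes from an object of $F^\beta$ to an object of $T^\beta$. So the key input is closure of $T^\beta$ under quotients and extensions, and of $F^\beta$ under subobjects and extensions. This follows from the quotient and subobject characterizations of $T^\beta$ and $F^\beta$ listed above, using the standard fact that for slopes of sheaves, quotients of $T^\beta$-objects have $\mu_\beta>0$.

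With those closure properties the argument runs as follows. The kernel of $H^{-1}(B)\to H^0(A)$ lies in $F^\beta$, so $H^{-1}(E)$ is an extension of objects of $F^\beta$ and hence lies in $F^\beta$. Likewise the cokernel of that map is a quotient of $H^0(A)$, so it lies in $T^\beta$, and then $H^0(E)\in T^\beta$. Therefore the middle class is extension-closed, and the first two descriptions coincide. The third description is the second one written for the two-term complex $F_{-1}\xrightarrow{d}F_0$, using $\ker d=H^{-1}$ and $\operatorname{coker} d=H^0$. Every object with cohomology in degrees $[-1,0]$ is quasi-isomorphic to such a complex, for instance by taking a locally free resolution and truncating.

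For abelianness, define
\[
D^{\leq 0}=\{E: H^i(E)=0 \text{ for } i>0,\ H^0(E)\in T^\beta\}
\]
and
\[
D^{\geq 0}=\{E: H^i(E)=0 \text{ for } i<-1,\ H^{-1}(E)\in F^\beta\}.
\]
I then verify the t-structure axioms. The Hom-vanishing $Hom(D^{\leq 0},D^{\geq 1})=0$ reduces, by dévissage on cohomology, to $Hom(T,F)=0$ for $T\in T^\beta$, $F\in F^\beta$, together with the vanishing of negative Exts between sheaves. The truncation triangles come from splicing the standard truncation with the torsion sequence $0\to T(H^0E)\to H^0E\to F(H^0E)\to 0$. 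The heart $D^{\leq0}\cap D^{\geq0}$ is exactly the middle description, and the heart of any t-structure is abelian by the Beilinson–Bernstein–Deligne theorem. The same description shows the t-structure is bounded, so $K(Coh^\beta(X))=K(D^b(X))$.

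I expect the main obstacle to be checking that $(T^\beta,F^\beta)$ really is a torsion pair. This means confirming that the HN filtration for $\mu_\beta$, which includes torsion sheaves at slope $+\infty$, splits every sheaf as $0\to T(E)\to E\to F(E)\to 0$: $T(E)$ is the last filtration step whose factors have positive slope, and $F(E)$ is the quotient. One also needs $Hom(T^\beta,F^\beta)=0$, which follows from the HN Hom-vanishing stated in the previous proposition. Everything after that is formal.
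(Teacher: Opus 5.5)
Your proposal is correct and is essentially the standard tilting argument of \cite{happel1996tilting}, which the paper only cites without giving a proof of its own: the closure properties of the torsion pair $(T^\beta,F^\beta)$ give the equality of the three descriptions, and the tilted t-structure, obtained by splicing the standard truncation with the torsion sequence of $H^0$, has this category as its heart and is therefore abelian. You are also right that the vanishing condition should read $H^i(E)=0$ for $i\neq -1,0$; the paper's $i\neq 0,1$ is a typo.
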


Since we aim to leverage the Grothendieck group of the heart $\Aab=Coh^\beta (X)$, which coincides with the Grothendieck group of the bounded derived category $D^b (X)$, we need to study the short exact sequences within the abelian category $\Aab$. These short exact sequences in $\Aab$ correspond precisely to the exact triangles in $D^b (X)$ of the form $$A \xrightarrow{a} E \xrightarrow{b} B \rightarrow A [1]$$ where all the objects $A, E$ and $B$ belong to the heart $\Aab$, using the two-terms structure of $\Aab$ we have that the short exact sequence can be expressed as follows:
\[
\xymatrix{
A: \ar[d]^a & A_{-1} \ar[d]^{a_{-1}} \ar[r]^{d_A} & A_0 \ar[d]^{a_{0}} \\
E: \ar[d]^b & E_{-1} \ar[d]^{b_{-1}} \ar[r]^{d_E} & E_0 \ar[d]^{b_{0}} \\
B: & B_{-1} \ar[r]^{d_B} & B_0 \\
}
\]
with all squares commutative, columns exact with $a_i$ injective and $b_i$ surjective.

Since we have $T^\beta \hookrightarrow Coh^\beta (X)$ as $T \mapsto T_\cdot = (0 \xrightarrow{0} T) \in Coh^\beta (X)$, if $F \in F^\beta$ then 

\xymatrix{
F_\cdot : ... \ar[r]^0 & 0 \ar[r]^0 & \underbrace{ F }_{\text{index } 0} \ar[r]^0 & 0 \ar[r]^0 & ... & \in D^b (X) \\ F_\cdot [1] : ... \ar[r]^0 & \underbrace{ F }_{\text{index } -1} \ar[r]^0 & 0 \ar[r]^0 & 0 \ar[r] & ... & \in D^b (X)
}

and $F\mapsto F_\cdot [1] = (F \xrightarrow{0} 0 ) \in Coh^\beta (X)$ where $F \mapsto F_\cdot$ is the same morphism that before, that is, the injection in the index 0.

In particular, every object $E \in Coh^\beta (X)$ fits into a short exact sequence 
$$H^{-1}(E)_\cdot \hookrightarrow E \twoheadrightarrow H^0 (E)_\cdot $$
The isomorphism class of $E \in Coh^\beta (X)$ is determined by the extension class \[Ext^1_{D^b (X)} (H^0 (E)_\cdot , H^{-1} (E)_\cdot [1] )\] that is equivalent to $Ext^2_{Coh(X)} (H^0 (E), H^{-1} (E))$ as:
\[
\xymatrix{
H^{-1} (E)_\cdot [1] : \ar[d] & H^{-1} (E) \ar[d] \ar[r]^0 & 0 \ar[d]^0\\
E : \ar[d] & E_{-1} \ar[d]^0 \ar[r]^d & E_0 \ar[d]\\
H^0 (E)_\cdot : & 0 \ar[r]^0 & H^0 (E)
}
\]

Now, since we can see the objects $E$ of $T^\beta$ as objects in $Coh^\beta (X)$, denoted as $E_\cdot$, we want to study a characterization of the subobjects $A_\cdot \hookrightarrow E_\cdot$ in $Coh^\beta (X)$ using properties of the abelian category $Coh(X)$.

\begin{prop}[\text{\cite[Proposition 2.4]{bayer2015wall}}] \label{propsubobj}
Let $E \in T^\beta$ and $E_\cdot$ considered as an object of $Coh^\beta (X)$. To give a subobject $A_\cdot \hookrightarrow E_\cdot$ of $E$ respect to the category $Coh^\beta (X)$ is equivalent of giving a sheaf $A \in T^\beta$ with a map $f:A \rightarrow E$ whose kernel (as a map in $Coh(X)$) satisfies $ker(f) \in F^\beta$.
\end{prop}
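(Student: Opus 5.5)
We need to prove Proposition \ref{propsubobj}: for $E \in T^\beta$, subobjects $A_\cdot \hookrightarrow E_\cdot$ in $Coh^\beta(X)$ correspond exactly to maps $f: A \to E$ with $A \in T^\beta$ and $\ker(f) \in F^\beta$.

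The plan is to take a short exact sequence $0 \to A_\cdot \to E_\cdot \to B_\cdot \to 0$ in $Coh^\beta(X)$ and apply the long exact cohomology sequence of the exact triangle $A_\cdot \to E_\cdot \to B_\cdot \to A_\cdot[1]$ in $D^b(X)$. By Proposition \ref{Cohbeta}, each object of the heart has cohomology only in degrees $-1$ and $0$, with $H^{-1} \in F^\beta$ and $H^0 \in T^\beta$. Since $E_\cdot$ is a sheaf in degree $0$, $H^{-1}(E_\cdot)=0$, and the long exact sequence reads
$$0 \to H^{-1}(A_\cdot) \to 0 \to H^{-1}(B_\cdot) \to H^0(A_\cdot) \xrightarrow{f} E \to H^0(B_\cdot) \to 0.$$
Hence $H^{-1}(A_\cdot)=0$, so $A_\cdot$ is a sheaf $A = H^0(A_\cdot) \in T^\beta$. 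Moreover $\ker(f) \cong H^{-1}(B_\cdot) \in F^\beta$, and $\operatorname{coker}(f) = H^0(B_\cdot) \in T^\beta$.

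For the converse, let $A \in T^\beta$ and $f: A \to E$ with $\ker(f) \in F^\beta$. Take $B$ to be the cone of $f$, i.e.\ the two-term complex $(A \xrightarrow{f} E)$ placed in degrees $-1,0$. Its cohomology is $H^{-1}(B)=\ker f \in F^\beta$ and $H^0(B)=\operatorname{coker} f$. The cokernel is a quotient of $E \in T^\beta$, and $T^\beta$ is closed under quotients (by the third characterization of $T^\beta$: every quotient of a quotient is a quotient of $E$, so all its quotients have $\mu_\beta>0$). Thus $\operatorname{coker} f \in T^\beta$, and by the third characterization in Proposition \ref{Cohbeta}, $B \in Coh^\beta(X)$.

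The triangle $A_\cdot \to E_\cdot \to B \to A_\cdot[1]$ then has all three vertices in the heart. Exact triangles with vertices in a heart of a bounded t-structure are exactly the short exact sequences in that heart, so $A_\cdot \hookrightarrow E_\cdot$ is a monomorphism in $Coh^\beta(X)$. Finally, the two constructions are mutually inverse: a subobject is determined by the map $A_\cdot \to E_\cdot$, which is a morphism of sheaves since both objects are concentrated in degree $0$.

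The main obstacle is the correct use of the heart formalism, namely identifying short exact sequences in $Coh^\beta(X)$ with distinguished triangles whose vertices lie in the heart. This is standard for the heart of a bounded t-structure, and Happel--Reiten--Smalø \cite{happel1996tilting} shows that the tilted heart is of this kind. Beyond that, the only input needed is closedness of $T^\beta$ under quotients, which follows directly from its definition.
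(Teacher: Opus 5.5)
Your proof is correct. The paper gives no argument of its own and only cites \cite[Proposition 2.4]{bayer2015wall}; your proof is the standard one behind that result: the long exact sequence of cohomology sheaves of $A_\cdot \to E_\cdot \to B_\cdot \to A_\cdot[1]$ for one direction, and for the converse the two-term cone $(A \xrightarrow{f} E)$, using that $T^\beta$ is closed under quotients and that triangles with all three vertices in the heart are short exact sequences there.
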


In order to reply the additive property of $deg(\cdot)$ and $rank(\cdot)$ under the short exact sequences, we define the Grothendieck group, denoted as $K(\Aab)$, for an abelian category $\Aab$. This group is constructed as the quotient of the free abelian group generated by the objects of $\Aab$, under the relation $[B] = [B']+[B'']$ for any short exact sequence of the form $ 0 \rightarrow B' \rightarrow B \rightarrow B'' \rightarrow 0$ in $\Aab$. For example, if \[0 \rightarrow \mathcal{O}_X(-1) \rightarrow \mathcal{O}_X \rightarrow \mathcal{O}_C \rightarrow 0\] is a short exact sequence in $Coh(X)$, where $C$ is a curve on the surface $X$, then $[\mathcal{O}_X] = [\mathcal{O}_X(-1)] + [\mathcal{O}_C]$ in $K(X)$. This relation reflects the additivity of the degree and rank under short exact sequences.
 
In our case, for the abelian category $\Aab=Coh(X)$ the Grothendieck group denoted as $K(X) := K(Coh(X))$ is generated by the classes of vector bundles $[F]$ on the variety $X$, modulo the relation defined above. This group, $K(X)$, provides a convenient way to encode numerical invariants of coherent sheaves, such as the rank and degree, in a linear algebraic setting.

We fix a finite rank lattice $\Lambda$ (that is, a free abelian group with finite rank) and a surjective group homomorphism $\nu :K(\Aab) \rightarrow \Lambda.$ This allows us to define numerical invariants of objects in $Coh(X)$ by considering their images under $\nu$. For instance, the rank and degree of a vector bundle $F$ can be recovered as $\nu([F]) = (rank(F), deg(F))$.

\subsection{Stability conditions on abelian categories}

In this subsection, we explore the fundamental concepts related to stability conditions on abelian categories. The formal definition of stability conditions was introduced by Bridgeland in \cite{bridgeland2007stability}.

Bridgeland's framework provides a powerful tool for studying the structure of the bounded derived category $D^b (X)$ of a variety $X$. Central to this approach is the notion of a stability condition, which endows the objects in $D^b (X)$ with a notion of (semi)stability. These stability conditions are parametrized by a manifold, known as the stability manifold, which exhibits a rich wall and chamber structure.

For the specific case of K3 surfaces, we focus on a 2-dimensional family of stability conditions. This specialized setting allows us to gain a more refined understanding of the structure of the walls and chambers in the stability manifold.

The key ideas and properties of Bridgeland stability conditions on abelian categories will be introduced in the following subsection, laying the groundwork for our subsequent results.

\begin{defi}
A weak stability function on an abelian category $\Aab$ is a group homomorphism $Z : \Lambda \rightarrow \mathbb{C}$ such that for any $E \in A$, $$Z(\nu (E)) = m (\nu (E)) exp(i \pi \phi( \nu (E)))$$ where $m(\nu(E)) \geq 0$ and $0<\phi (\nu (E)) \leq 1$
\end{defi}

If for any non-trivial object $E$, we have $Z(\nu (E)) \neq 0$, the homomorphism $Z$ is called a \textit{stability function}. If $Z(\nu (E)) = 0$ for a non-trivial object $E \in A$, then we define $\phi (\nu (E)) = 1$. The real number $\phi (\nu (E)) \in (0,1]$ is called the phase of the object $E$. For abuse notations we write $Z(E)$ and $\phi (E)$ instead of $Z(\nu (E))$ and $\phi (\nu (E))$.

\begin{defi}
A non-zero object $E \in \Aab$ is said to be Z-(semi)stable when Z is a stability function if $$0 \neq E' \subsetneq E \Longrightarrow \phi (E') < \phi (E) (\leq \text{ resp.})$$
\end{defi}

We say that the stability function $Z$ satisfies the Harder-Narasimhan property if every non-zero object $E \in \Aab$ has a finite filtration $$0=E_0 \subsetneq E_1 \subsetneq ... \subsetneq E_{n-1} \subsetneq E_n=E$$

whose factors $F_i = E_i/E_{i+1}$ are $Z$-semistable and $$\phi^+ (E) = \phi (F_1) > \phi (F_2) > ... > \phi (F_n) = \phi^- (E)$$

\begin{defi}
Pick a norm $|| \cdot ||$ on $\Lambda_\mathbb{R} = \Lambda \otimes \mathbb{R}$. A (weak) stability function $Z$ on an abelian category $\Aab$ satisfies the support property if there exists a constant $C>0$ such that for all $Z$-semistable objects $0 \neq E$, we have $$||\nu (E)|| \leq C |Z(\nu (E))|$$
\end{defi}

The support property plays a crucial role in endowing the set of stability functions with a geometric structure. By comparing stability functions to norms in an Euclidean space, the support property allows us to define a metric and topology on the space of stability functions on an abelian category $\Aab$. 

Building upon this foundation, we can extend the notion of a stability function from abelian categories to the bounded derived category $D^b (X)$. The key idea is to leverage the concept of the heart $\Aab$ of a t-structure. Since the heart $\Aab$ and the entire derived category $D^b (X)$ share the same Grothendieck group, we can define a stability function on $D^b (X)$ that restricts to a stability function on $\Aab$. This extension allows us to study the stability of complexes in $D^b (X)$ using the same framework as for objects in the abelian category $\Aab$.

\begin{defi}
A (weak) stability condition on the bounded derived category $D^b (X) = D^b (Coh(X))$ is a pair $v=(Z,\Aab)$ where $\Aab$ is the heart of a bounded t-structure on $D^b (X)$ and $Z$ is a (weak) stability function on the abelian category $\Aab$ which satisfies the Harder-Narasimhan property and the support property.
\end{defi}

If $v=(Z,\Aab)$ is a stability condition on $D^b (X)$ an object $E \in D^b (X)$ is said to be $v$-(semi)stable if a shift $E[k]$ is contained in the abelian category $\Aab$ and the object $E[k]$ is (semi)-stable with respect to the stability function $Z$.

The definition of stability conditions on a triangulated category, such as the bounded derived category $D^b(X)$, relies crucially on the concept of the heart of a t-structure. This abelian subcategory $\Aab$ plays a central role in Bridgeland's framework, as it provides the appropriate setting for defining a stability function and the associated notion of (semi)stability.

The construction of the heart $\Aab$ is not a trivial task and often involves the use of homological algebra tools, such as tilting. By performing a tilting procedure, we can produce new hearts of t-structures that exhibit different properties and allow for a more refined analysis of the stability conditions.

Furthermore, the concept of a slicing is intimately connected to the heart of a t-structure. A slicing is a parametrization of the subcategories of the triangulated category using the real numbers. This parametrization enables us to consider families of objects that belong to specific subcategories within given intervals. These families of objects are crucial for the definition and study of stability conditions on $D^b (X)$.

\begin{defi} \label{slicing}
A slicing $P$ of a triangulated category $D$ consist of full additive subcategories $P(\phi)$ for each $\phi \in \mathbb{R}$ satisfying the following axioms:
\begin{enumerate}
    \item $\forall \phi \in \mathbb{R}$, $P (\phi +1) = P(\phi)[1]$
    \item If $\phi_1 > \phi_2$ and $A_i \in P(\phi_i)$ then $Hom_D (A_1,A_2)=0$
    \item for each non-zero object $E \in D$ there is a finite sequence of real numbers $$\phi^+ (E) = \phi_1 > \phi_2 > ... > \phi_n = \phi^- (E)$$ and a collection of triangles
    \[
    \xymatrix{
    0=E_0 \ar[rr]  & & E_1 \ar[rr] \ar[ld]  & & E_2 \ar[ld] \ar[r] \ & ...  \ar[r] & E_{n-1} \ar[rr]  & & E_n \ar[ld] \\ 
     & F_1 \ar[lu] &  & F_2 \ar[lu] &   &   &  &   F_n \ar[lu] & 
    }
    \]
    with $F_i \in P(\phi_i)$ for all $i$
\end{enumerate}
\end{defi}

Any weak stability condition $v=(Z,\Aab)$ defines a slicing $P_v$ (which depends of $v$) of $D^b (X)$ as follows: for each $\phi \in (0,1]$, let $P_v (\phi)$ be the full additive subcategory of $D^b (X)$ of semistable objects with phase $\phi$, together with $0$. The part 1 of the Definition \ref{slicing} determines $P_v (\phi)$ for all $\phi \in \mathbb{R}$. Then, to refer to a stability condition $\nu$, we can use the pair $(Z,\Aab)$ where $\Aab$ is the hearth of a bounded t-structure on $D^b (X)$ or the pair $(Z,P_\nu)$ where $P_\nu$ is a slicing of $D^b (X)$.

\begin{teo}[\text{\cite[Lemma 6.2]{bridgeland2008stability}}] \label{teoalfbet}
For each $\beta,\alpha \in \mathbb{R}$ with $\alpha>0$, considerer the pair $\sigma_{\beta,\alpha} = (Z_{\beta,\alpha}, Coh^\beta (X))$ with $Coh^\beta (X)$ as below and with $Z_{\beta,\alpha} : K (D^b (X)) \rightarrow \mathbb{C}$ defined by: \begin{align*}
    Z_{\beta,\alpha} (E) & = \frac{\alpha^2 H^2 - b^2 H^2}{2} v_0 (E) + \beta H v_1 (E) - v_2 (E) + i \alpha H (v_1 (E) - \beta H rk(E)) \\ & = \langle exp(i \alpha H + \beta H) , v(E) \rangle \\ & = \langle exp(i \alpha H + \beta H) , (ch_0 (E) , ch_1 (E) , ch_2 (E) + ch_0 (E) \rangle 
\end{align*}
This pair defines a Bridgeland stability condition on $D^b (X)$ if $\mathfrak{Re} (Z_{\beta,\alpha} (\delta) ) > 0$ for all roots $\delta \in H^\ast_{alg} (X;\mathbb{Z})$ of the form $(r,r\beta,s)$ with $r>0$ and $s \in \mathbb{Z}$ arbitrary; in particular, this holds for $\alpha^2 H^2 \geq 2$.

Moreover, the family of stability conditions $\sigma_{\beta,\alpha}$ varies continuously as $(\beta,\alpha)$ vary in $\mathbb{R} \times \mathbb{R}_{>0}$.
\end{teo}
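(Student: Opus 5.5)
The plan is to check the axioms of a Bridgeland stability condition one by one for $\sigma_{\beta,\alpha}=(Z_{\beta,\alpha},Coh^\beta(X))$: positivity of $Z_{\beta,\alpha}$ on the heart, the Harder--Narasimhan property, and the support property. Continuity in $(\beta,\alpha)$ should then come from Bridgeland's deformation theorem.

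\textbf{Step 1: positivity on the heart.} First I show that $Z_{\beta,\alpha}$ maps every nonzero object of $Coh^\beta(X)$ into $\mathbb{H}\cup\mathbb{R}_{<0}$. Since $Coh^\beta(X)=\langle T^\beta,F^\beta[1]\rangle$ (Proposition \ref{Cohbeta}), it is enough to treat objects of $T^\beta$ and objects $F[1]$ with $F\in F^\beta$. The imaginary part is $\alpha\, rk(E)\,\mu_\beta(E)$ for positive rank and $\alpha H.c_1(E)$ for torsion sheaves.
\begin{itemize}
\item For $E\in T^\beta$ of positive rank, the imaginary part is $>0$. For torsion with one-dimensional support it is also $>0$.
\item For zero-dimensional torsion, $Z=-ch_2<0$, so the phase is $1$.
\item For $F\in F^\beta$ we get $\mathfrak{Im}\,Z(F[1])\ge 0$. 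Equality forces every HN factor of $F$ to satisfy $\mu_\beta=0$, which can only happen when $\beta H^2\in\mathbb{Q}$.
\end{itemize}

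\textbf{Step 2: the boundary case, which I expect to be the main obstacle.} We must prove $\mathfrak{Re}\,Z(F)>0$ for $\mu$-semistable torsion-free $F$ with $\mu_\beta(F)=0$, so that $F[1]$ lands on the negative real axis. By considering Jordan--Hölder factors we may reduce to $\mu$-stable $F$ of rank $r$.
\begin{itemize}
\item Write $c_1(F)=r\beta H+D$ with $D.H=0$ (over $\mathbb{R}$). The Hodge index theorem gives $D^2\le 0$.
\item Using the Mukai vector $v(F)=(r,c_1,ch_2+r)$ and $v^2=c_1^2-2r(ch_2+r)$, compute
\[
\mathfrak{Re}\,Z(F)=\frac{r\alpha^2H^2}{2}+\frac{v(F)^2-D^2}{2r}.
\]
\item On a K3 surface, stable sheaves satisfy $v(F)^2\ge -2$, because $\hom(F,F)=1$ and $\mathrm{ext}^2(F,F)=1$.
\item If $v(F)^2\ge 0$, or if $D\neq 0$ (so $-D^2>0$), the expression is $>0$ once we control the $v^2=-2$, $D\neq0$ case. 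For that case I would use integrality of $D^2$ together with $D^2<0$, hence $-D^2\ge 2$ after clearing denominators carefully. This is the delicate point.
\item The remaining case is $v^2=-2$ and $D=0$. Then $v(F)$ is a root of the form $(r,r\beta H,s)$, and the hypothesis $\mathfrak{Re}\,Z_{\beta,\alpha}(\delta)>0$ applies directly.
\item For the "in particular" clause, take $\delta=(r,r\beta H,s)$ with $\delta^2=-2$. This gives $s=(r^2\beta^2H^2+2)/(2r)$, and substituting yields $\mathfrak{Re}\,Z(\delta)=\frac{r\alpha^2H^2}{2}-\frac1r$. That is positive whenever $\alpha^2H^2\ge 2$ and $r\ge 1$.
\end{itemize}

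\textbf{Step 3: HN property, support property, continuity.}
\begin{itemize}
\item For $\beta H^2\in\mathbb{Q}$ and $\alpha^2\in\mathbb{Q}$, I follow Bridgeland: $Coh^\beta(X)$ is Noetherian and the image of $\mathfrak{Im}\,Z$ is discrete. Together these give HN filtrations via the standard criterion.
\item The support property follows from the quadratic form given by the Mukai pairing. It is negative definite on $\ker Z$ and non-negative on stable objects by the estimate $v^2\ge -2$ refined as in Step 2.
\item Bridgeland's deformation theorem then extends the HN and support properties to all real $(\beta,\alpha)$ in the region where the root condition holds. Within that region, the heart and central charge vary continuously, and a stability condition is determined locally by its central charge. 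This gives continuity of $(\beta,\alpha)\mapsto\sigma_{\beta,\alpha}$.
\end{itemize}
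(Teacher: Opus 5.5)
Your outline follows Bridgeland's own argument, which the paper does not reproduce (it only cites his Lemma 6.2). The step you single out as delicate fails as proposed. In the case $v(F)^2=-2$, $D\neq 0$, the class $D=c_1(F)-\frac{H.c_1(F)}{H^2}H$ is only a $\mathbb{Q}$-class. What is integral is $H^2D^2=H^2c_1(F)^2-(H.c_1(F))^2$, so Hodge index only gives $-D^2\geq 1/H^2$, not $-D^2\geq 2$. Then $\mathfrak{Re}\,Z_{\beta,\alpha}(F)=\frac{1}{2r}\left(r^2\alpha^2H^2-2-D^2\right)$ can be $\leq 0$ for small $\alpha$.

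No better estimate can rescue this under the hypothesis as literally stated (roots with $c_1=r\beta H$ only). Take an elliptic K3 surface with section $\sigma$, fibre $F$ and $NS(X)=\mathbb{Z}\sigma\oplus\mathbb{Z}F$, the ample class $H=\sigma+3F$ (so $H^2=4$, $H.F=1$), $\beta=\frac14$, and the spherical line bundle $\OO_X(F)$. Then $H.F=\beta H^2$, so $\OO_X(F)\in F^\beta$, $D^2=-\frac14$, and $Z_{\beta,\alpha}(\OO_X(F))=2\alpha^2-\frac78$. Hence $Z_{\beta,\alpha}(\OO_X(F)[1])\in\mathbb{R}_{>0}$ for $\alpha^2<\frac{7}{16}$. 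Yet there is no root of the form $(r,\frac r4H,s)$ at all: one needs $4\mid r$, and then $s=\frac r8+\frac1r\notin\mathbb{Z}$. So the hypothesis holds vacuously while the conclusion fails.

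What is missing is the correct hypothesis, and either of two versions works:
\begin{itemize}
\item Require $\mathfrak{Re}\,Z_{\beta,\alpha}(\delta)>0$ for \emph{all} roots with $\mathrm{rk}(\delta)>0$ and $H.c_1(\delta)=\beta H^2\,\mathrm{rk}(\delta)$ (in Bridgeland's form: $Z(E)\notin\mathbb{R}_{\leq 0}$ for every spherical sheaf $E$). Then $D\neq0$ is covered by the hypothesis exactly like $D=0$.
\item Assume $(\ast\ast)$. Then $\frac{H.c_1(F)}{H^2}\in\mathbb{Z}$, so $D\in NS(X)$ is integral, $D^2$ is even and negative, hence $D^2\leq -2$ and $\mathfrak{Re}\,Z_{\beta,\alpha}(F)\geq \frac{r\alpha^2H^2}{2}>0$. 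This is the ``clearing of denominators'' you had in mind, and it is the setting in which the paper actually uses the theorem.
\end{itemize}

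Your own formula $\mathfrak{Re}\,Z_{\beta,\alpha}(\delta)=\frac{r\alpha^2H^2}{2}-\frac1r$ also vanishes for $r=1$ and $\alpha^2H^2=2$. For $\beta=0$ and $\delta=v(\OO_X)$ one gets $Z_{0,\alpha}(\OO_X)=0$ with $0\neq\OO_X[1]\in Coh^0(X)$. So the ``in particular'' clause needs $\alpha^2H^2>2$, which is what the paper itself later uses ($\alpha>\sqrt{2/H^2}$).

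In Step 3, the Mukai form is not non-negative on stable objects: spherical ones, such as $\OO_X[1]$ for $\beta=0$, have $v^2=-2$. The support property therefore needs an extra argument, for instance that only finitely many roots have $|Z(\delta)|$ below a given bound.

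More seriously, the deformation theorem only produces \emph{some} stability condition with central charge $Z_{\beta,\alpha}$ near a rational $\sigma_{\beta_0,\alpha_0}$. You still have to show that its heart is $Coh^\beta(X)$. Appealing to local injectivity of $\mathcal{Z}$ is circular, since it presupposes that $(Z_{\beta,\alpha},Coh^\beta(X))$ is already a stability condition close to $\sigma_{\beta_0,\alpha_0}$. Bridgeland closes this gap in two steps. He characterises the $\sigma_{\beta,\alpha}$ as the stability conditions with such a central charge in which every skyscraper sheaf $\OO_x$ is stable of phase $1$. He then shows that this property survives small deformations.
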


In order to explain the notation we introduce the Mukai vector of an object $E$. 
\vspace{1mm}

The Mukai vector of an object $E \in D^b (X)$ given by $$v(E) = (v_0 (E), v_1 (E) , v_2 (E)) = (ch_0 (E),ch_1 (E) , ch_2 (E) + ch_0 (E))$$
lies in the algebraic cohomology $H^\ast_{alg} (X;\mathbb{Z})$. The pairing $\langle \text{ } , \text{ } \rangle $ is the Mukai pairing $$\langle v(E),v(F) \rangle = - \chi (E,F) = v_1(E) v_1 (F) - v_0 (E) v_2 (F) - v_2 (E) v_0 (F)$$

For each sheaf $E$, we have $\mathfrak{Im} (Z_{\beta,\alpha} (E))\geq 0$ if and only if $\mu_\beta (E) \geq 0$. And $Z_{\beta,\alpha} (F[1])=-Z_{\beta,\alpha}(F)$.

\vspace{1mm}

Let $Stab(X)$ be the set of stability conditions of $D^b (X)$ (with respect to the lattice $\Lambda$ and the vector $v$). This set can be enriched with a topology as the coarsest topology such that for any $E \in D^b (X)$ the maps $(Z,\Aab) \mapsto Z$, $(Z,\Aab) \mapsto \phi^+ (E)$ and $(Z,\Aab) \mapsto \phi^- (E)$ are continuous.

\begin{teo}[\text{\cite[Theorem 1.2]{bridgeland2007stability}}]
The map $\mathcal{Z} : Stab(X) \rightarrow Hom (\Lambda, \mathbb{C})$ given by $(\Aab,Z) \mapsto Z$ is a local homeomorphism. In particular, $Stab(X)$ is a complex manifold of dimension $rk(\Lambda)$.
\end{teo}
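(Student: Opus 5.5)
The plan is to follow Bridgeland's original strategy. We work with the slicing description $(Z,P)$ rather than the heart description, and prove two things. First, $\mathcal{Z}$ is locally injective. Second, every small deformation of the central charge lifts to a stability condition near the given one. Continuity of $\mathcal{Z}$ is immediate from the definition of the topology. Local injectivity plus local surjectivity onto an open ball, with continuous inverse, gives the local homeomorphism. The manifold statement then follows because $Hom(\Lambda,\mathbb{C})\cong\mathbb{C}^{rk(\Lambda)}$.

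\textbf{Topology and local injectivity.} We make the topology explicit by the generalized metric
\[
d(\sigma_1,\sigma_2)=\sup_{0\neq E}\Big\{|\phi^+_{\sigma_2}(E)-\phi^+_{\sigma_1}(E)|,\ |\phi^-_{\sigma_2}(E)-\phi^-_{\sigma_1}(E)|,\ \|Z_1-Z_2\|\Big\}.
\]
Using the HN triangles of Definition \ref{slicing}, we check that this induces the coarsest topology described above. We then prove the key rigidity lemma. If $\sigma_1=(Z,P_1)$ and $\sigma_2=(Z,P_2)$ have the \emph{same} central charge and $d(P_1,P_2)<1$, then $P_1=P_2$. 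To see this, take $E\in P_1(\phi)$ and consider its $\sigma_2$-HN filtration, whose phases all lie in $(\phi-1,\phi+1)$. The semistable factors of extreme phase admit nonzero maps to or from $E$. Combined with the Hom-vanishing axiom and the fact that $Z$ of a semistable object lies on its phase ray, this forces all factors to have phase $\phi$. Hence $E\in P_2(\phi)$, and by symmetry the slicings coincide. This gives injectivity of $\mathcal{Z}$ on balls of radius $<1/2$.

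\textbf{Deformation (local surjectivity).} Fix $\sigma=(Z,P)$ satisfying the support property with constant $C$. The support property yields a norm $\|\cdot\|_\sigma$ on $Hom(\Lambda,\mathbb{C})$ with $|U(E)|\le\|U\|_\sigma|Z(E)|$ for all $\sigma$-semistable $E$. Fix a small $\epsilon>0$ and let $W$ satisfy $\|W-Z\|_\sigma<\sin(\pi\epsilon)$. Then for every $\sigma$-semistable $E$ of phase $\phi$, the phase of $W(E)$ differs from $\phi$ by less than $\epsilon$.

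Now consider the thin quasi-abelian categories $P((a,b))$ with $b-a<1-2\epsilon$. On each one, $W$ restricts to a stability function whose values lie in a half-plane. We define $W$-semistable objects of phase $\psi$ inside $P((\psi-\epsilon,\psi+\epsilon))$ and set $Q(\psi)$ to be these objects. The slicing axioms for $Q$ hold as follows. Shift-compatibility is formal. Hom-vanishing follows from comparing phases within overlapping thin intervals. The HN property comes from refining each $\sigma$-HN factor by its $W$-HN filtration in a thin interval, then reordering.

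\textbf{Main obstacle.} The hard step is the existence of $W$-HN filtrations in the thin quasi-abelian categories. These categories need not be of finite length, so one must rule out infinite chains of strict subobjects with increasing $W$-phase (and the dual quotient chains). This is exactly where the support property enters. The classes of semistable subquotients satisfy $\|\nu(E)\|\le C|Z(E)|$, and $|Z|$ is bounded in terms of $|Z(E)|$ for subobjects in a thin sector, so only finitely many classes in $\Lambda$ can occur. Chains therefore stabilize, and one can run Bridgeland's ``maximal destabilizing quotient'' argument. I would first prove the finiteness of relevant classes, then deduce the existence of maximal destabilizing subobjects and quotients as in the finite-length case.

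Finally, we verify that $(W,Q)$ again satisfies the support property with a comparable constant, and that $d(P,Q)<\epsilon$. Local injectivity then shows that the lift is unique, and continuity of $W\mapsto(W,Q)$ follows from the phase estimates.
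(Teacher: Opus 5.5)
The paper gives no proof of this statement; it only cites Bridgeland. Your outline (generalized metric, rigidity for equal central charges, deformation through thin quasi-abelian categories $P((a,b))$, support property for finiteness) is Bridgeland's own strategy in its support-property form. The gap is in how you get Harder--Narasimhan filtrations for $Q$ on all of $D^b(X)$. ``Refine each $\sigma$-HN factor by its $W$-HN filtration, then reorder'' is not a valid step. The refined factors of $A_i\in P(\phi_i)$ have $W$-phases in $(\phi_i-\varepsilon,\phi_i+\varepsilon)$, so they interleave as soon as $\phi_i-\phi_{i+1}<2\varepsilon$. Factors of a filtration in a triangulated category cannot be permuted: an extension $F\to X\to G$ whose sub $F$ has smaller $W$-phase than $G$ generally has an HN filtration unrelated to $(G,F)$; it may even be $W$-semistable, which is exactly wall-crossing. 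Since the $\sigma$-phases of $E$ can spread over more than a thin interval with every gap $<2\varepsilon$, grouping factors into well-separated thin blocks does not help either.

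A correct route is to build, for each $t$, the truncation triangle for $(\langle Q(>t)\rangle,\langle Q(\le t)\rangle)$:
\begin{itemize}
\item take the $P$-truncations $E_1\to E\to E_2$ at $t+\varepsilon$ and $E_3\to E_2\to E_4$ at $t-\varepsilon$;
\item split the thin piece $E_3\in P((t-\varepsilon,t+\varepsilon])$ at $t$ using its $W$-HN filtration, $E_5\to E_3\to E_6$;
\item apply the octahedral axiom twice to get $E'\to E\to E''$ with $E'\in\langle E_1,E_5\rangle\subset\langle Q(>t)\rangle$ and $E''\in\langle E_6,E_4\rangle\subset\langle Q(\le t)\rangle$, using $P(>t+\varepsilon)\subset\langle Q(>t)\rangle$ and $P(\le t-\varepsilon)\subset\langle Q(\le t)\rangle$.
\end{itemize}
Then cut $E$ at finitely many $t_j$ between $\phi^-_\sigma(E)-\varepsilon$ and $\phi^+_\sigma(E)+\varepsilon$, spaced less than $1-4\varepsilon$ apart. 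Each piece lies in a thin $P((t_j-\varepsilon,t_{j-1}+\varepsilon])$, so it has a $W$-HN filtration there with phases in $(t_j,t_{j-1}]$. Concatenating these gives the $Q$-HN filtration of $E$.

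This gluing, and your Hom-vanishing claim, rest on a lemma you do not mention. You define $Q(\psi)$ through the single category $P((\psi-\varepsilon,\psi+\varepsilon))$. Semistability in a larger thin category implies semistability in a smaller one, but not conversely, because enlarging the quasi-abelian category creates new strict subobjects. Yet for $\psi_2<\psi_1<\psi_2+2\varepsilon$ the quasi-abelian Hom-vanishing argument needs both objects to be $W$-semistable in the common category $P((\psi_2-\varepsilon,\psi_1+\varepsilon))$. So you need Bridgeland's independence lemma: $W$-semistability of phase $\psi$ is the same in every thin $P((a,b))$ with $a+\varepsilon\le\psi\le b-\varepsilon$. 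By contrast, the step you call the main obstacle is the routine one. Your finiteness argument shows that thin $P((a,b))$ have finite length, after which the quasi-abelian HN argument goes through as in the locally finite case.

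Finally, the rigidity lemma needs a small repair. An extreme $\sigma_2$-factor of $E\in P_1(\phi)$ need not have $\sigma_1$-phases above $\phi$, so Hom-vanishing says nothing about it directly. Use instead the whole truncation $A\to E\to B$ with $A\in P_2((\phi,\phi+1))$ and $B\in P_2((\phi-1,\phi])$. The $P_1$-part of $A$ above $\phi$ maps to zero in $E$, hence factors through $B[-1]$. Since $\mathrm{Hom}(P_1(>\phi),B[-1])=0$, that part vanishes, so $A\in P_1((\phi-1,\phi])$. Then $Z(A)$ lies in two disjoint sectors, which forces $A=0$.
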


We can study the behavior of an object $E \in Coh(X)$ when we let vary the stability condition in $Stab(X)$. We want to study the sets of stability conditions for which $E$ is stable, semistable or unstable.

\begin{defi} \label{defiwall}
Let $v_0,w \in \Lambda-{0}$ be two non-parallel vectors. A numerical wall $W_w (v_0)$ for $v_0$ with respect to $w$ is a non-empty subset of $Stab(X)$ given by $$W_w (v_0) = \{ \sigma = (Z,P) \in Stab(X) : \mathfrak{Re} (Z(v_0)) \cdot \mathfrak{Im} (Z(w)) = \mathfrak{Re} (Z(w)) \cdot \mathfrak{Im} (Z(v_0))\}$$
\end{defi}

The wall and chamber structure of the stability manifold $Stab(X)$ is intimately related to the topology of this space. We denote by $\mathcal{W} (v_0)$ the set of numerical walls for a fixed Mukai vector $v_0$. These numerical walls are real codimension-1 submanifolds of $Stab(X)$, which divide the stability manifold into connected components called chambers.

\begin{sloppypar}
This wall and chamber structure encodes crucial information about the behavior of (semi)stability for objects with fixed Mukai vector $v_0$. As we vary the stability condition by moving within a chamber, the (semi)stability of objects with Mukai vector $v_0$ remains unchanged. However, when crossing a wall, the (semi)stability can change abruptly. By understanding this wall and chamber structure, we gain a global perspective on how the stability of objects depends on the choice of stability condition.
\end{sloppypar}

Furthermore, the topology of $Stab(X)$ is closely related to the wall-and-chamber decomposition. The connected components of the complement of the union of all walls are precisely the chambers. Since the walls form a locally finite arrangement, the topology of $Stab(X)$ remains well behaved.

In order to study the case of K3 surfaces, we want to describe the wall and chamber structure on the $(\beta,\alpha)$-plane for the $\sigma_{\beta,\alpha}$ stability conditions as in \cite{macri2017lectures} section 6.4.

Consider $T=\{ v \in K (X) : \chi (v,w)=0 \text{ for all } w \in K (X) \}$ and  the numerical Grothendieck group $K_{num} (X) = K (X) / T$ as a finitely generated $\mathbb{Z}$-lattice. In the case where $X$ is a K3 surface, the group $K_{num} (X)$ coincides with the algebraic cohomology group $H^\ast_{alg} (X)$. 

\begin{prop}[\text{\cite[Proposition 6.22]{macri2017lectures}}]
Fix a class $v \in K_{num} (X)$.
\begin{enumerate}
    \item All numerical walls are either semicircles with center on the $\beta$-axis or vertical rays.
    \item Two different numerical walls for $v$ cannot intersect.
    \item For a given class $K_{num} (X)$ the hyperbola $\mathfrak{Re} (Z_{\beta,\alpha} (V))=0$ intersects all numerical semicircular walls at their top points.
    \item If $ch_0 (v) \neq 0$, then there is an unique numerical vertical wall defined by the equation $$\beta = \frac{H \cdot ch_1 (v)}{H^2 \cdot ch_0 (v)}$$
    \item If $ch_0 (v) \neq 0$, then all semicircular walls to either side of the unique numerical vertical wall are strictly nested semicircles.
    \item If $ch_0 (v) = 0$, then there are only semicircular walls that are strictly nested.
    \item If a wall is an actual wall at a single point, it is an actual wall everywhere along the numerical wall.
\end{enumerate}
\end{prop}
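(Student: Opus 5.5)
The plan is to reduce everything to an explicit computation with the central charge. First I write $Z_{\beta,\alpha}$ in terms of the twisted Chern character $ch^\beta=e^{-\beta H}ch$:
\[
Z_{\beta,\alpha}(E)=-ch_2^\beta(E)+\tfrac{\alpha^2}{2}H^2 ch_0(E)+i\,\alpha\, H\cdot ch_1^\beta(E),
\]
up to the harmless constant shift coming from the Mukai vector. For $v,w\in K_{num}(X)$ I then write out the wall condition of Definition \ref{defiwall},
\[
\mathfrak{Re}\,Z(v)\,\mathfrak{Im}\,Z(w)-\mathfrak{Re}\,Z(w)\,\mathfrak{Im}\,Z(v)=\mathfrak{Im}\big(Z(w)\overline{Z(v)}\big)=0,
\]
and divide by $\alpha>0$. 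Expanding $ch^\beta$ in powers of $\beta$, the result should be an equation of the form
\[
x\,(\beta^2+\alpha^2)+y\,\beta+z=0 .
\]
Here $x,y,z$ are linear combinations of the $2\times2$ minors of the matrix with rows $(ch_0,H\cdot ch_1,ch_2)(v)$ and $(ch_0,H\cdot ch_1,ch_2)(w)$, and $x$ is a nonzero multiple of $ch_0(v)\,H\cdot ch_1(w)-ch_0(w)\,H\cdot ch_1(v)$. Consequently the wall is a semicircle centred on the $\beta$-axis when $x\neq0$, and a vertical ray $\beta=-z/y$ when $x=0$. This gives (1).

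For (2) I use linearity in $w$. For fixed $(\beta,\alpha)$, the map $w\mapsto \mathfrak{Im}(Z(w)\overline{Z(v)})$ is a real linear form on $\Lambda_\mathbb{R}$ that vanishes on $v$. Suppose two distinct walls, defined by $w_1,w_2$ with $v,w_1,w_2$ spanning the rank-3 lattice $H^*_{alg}$ (after passing to the relevant rank-3 sublattice), met at a point. Then the form would vanish identically there, so $Z(\Lambda_\mathbb{R})\subset\mathbb{R}\,Z(v)$. This is impossible, because for $\alpha>0$ the central charge $Z_{\beta,\alpha}$ hits both $\mathbb{R}$ and $i\mathbb{R}$; for example $Z(\OO_p)=-1$ while $Z(\OO_X)$ has imaginary part $-\alpha\beta H^2$, and one can use a class with nonzero imaginary part if $\beta=0$.

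The same linearity shows that the walls for $v$ form a pencil. Indeed, every wall equation is a combination $a f_1+b f_2$ of two fixed quadratic polynomials $f_1,f_2$, which are the equations coming from two basis vectors complementary to $v$. From this:
\begin{itemize}
\item[(4)] If $ch_0(v)\neq0$, the condition $x=0$ cuts out a single member of the pencil. Using that $w$ is parallel to $v$ in the $(ch_0,H\cdot ch_1)$ coordinates, this member is the line $\beta=\frac{H\cdot ch_1(v)}{H^2 ch_0(v)}$.
\item[(5) and (6)] Semicircles with centres on the axis that do not intersect must be nested. When $ch_0(v)\neq 0$ they cannot cross the vertical wall, so on each side they are strictly nested. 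When $ch_0(v)=0$, the coefficient $x$ is proportional to $H\cdot ch_1(v)\,ch_0(w)$, which is generically nonzero, so all walls are nested semicircles.
\item[(3)] I differentiate the semicircle equation in $\beta$: the top point of a wall is where $\partial_\beta=0$, i.e.\ $\beta=-y/(2x)$. I then check directly that $\mathfrak{Re}\,Z_{\beta,\alpha}(v)=0$ on the wall exactly at that $\beta$. This is a one-line substitution, because $\mathfrak{Re}\,Z(v)$ is itself a quadric of the same shape $ch_0(v)\tfrac{H^2}{2}(\alpha^2-\beta^2)+\dots$.
\end{itemize}

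The main obstacle is (7), which is not purely numerical. Suppose $\sigma_0$ on the numerical wall $W$ has a destabilising short exact sequence $A\hookrightarrow E\twoheadrightarrow B$ in $Coh^\beta(X)$, with $A,B$ semistable of the same phase. I want to show the sequence persists along $W$. The plan is to consider the subset of $W$ where $A$ and $B$ remain $\sigma$-semistable objects of the heart with equal phase, and prove it is open and closed in $W$. Openness and closedness should follow from the continuity of $\phi^\pm$ together with the local finiteness of walls for the classes $v(A),v(B)$, which is guaranteed by the support property. The delicate point is that $A,B$ stay in $Coh^\beta(X)$ as $\beta$ changes, since the heart moves. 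Here I would use (2): no other numerical wall for $v$, $v(A)$ or $v(B)$ can cross $W$. This lets me argue, via Proposition \ref{propsubobj} and the fact that $H^{-1}$ and $H^0$ can only leave $F^\beta$ or $T^\beta$ when some slope hits $\beta$, which itself produces a vertical wall, that the HN factors of $A$ and $B$ do not change along $W$. I would follow the argument of \cite{macri2017lectures} here.
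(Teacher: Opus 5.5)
The paper does not prove this statement; it quotes it from Macr\`i--Schmidt, where (1)--(6) are read off from the explicit wall equation and (7) comes from following a destabilising sequence along the wall. That is also your route. Your wall equation $x(\alpha^2+\beta^2)+y\beta+z=0$ is right. Writing $v=(r,c,s)$ and $w=(r',c',s')$ in the coordinates $(v_0,H\cdot v_1,v_2)$, one gets $x=\tfrac{H^2}{2}(rc'-r'c)$, $y=H^2(sr'-s'r)$ and $z=s'c-sc'$. Parts (1), (3) and (4) then go through as you say; I checked that $\mathfrak{Re}\,Z_{\beta,\alpha}(v)$ vanishes at the top point $\beta=-y/2x$, $\alpha^2=y^2/4x^2-z/x$. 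The problems are in (2), (5)--(6) and (7).

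In (2) you need $Z_{\beta,\alpha}(v)\neq 0$ at the intersection point. Otherwise the linear form vanishes for trivial reasons, and in fact every numerical wall for $v$ then passes through that point; take $H^2=2$, $v=(1,0,2)$, $(\beta,\alpha)=(0,\sqrt2)$. For $\mathrm{ch}_0(v)\neq 0$, nonvanishing on the whole upper half-plane is equivalent to $D:=((H\cdot v_1)^2-2H^2v_0v_2)/(H^2v_0)^2\ge 0$. This holds e.g.\ when $v^2\ge 0$, since Hodge index gives $D\ge v^2/(H^2v_0^2)$, so you should state it as a hypothesis.

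In (5)--(6), the claim that non-intersecting semicircles centred on the axis must be nested is false: disjoint semicircles can sit side by side. Part (3) rules this out. For $\mathrm{ch}_0(v)\neq0$, the hyperbola $\mathfrak{Re}\,Z_{\beta,\alpha}(v)=0$ is $(\beta-\mu)^2-\alpha^2=D$ with $\mu=H\cdot v_1/(H^2v_0)$. Each branch is a graph $\beta=\mu\pm\sqrt{\alpha^2+D}$ with $|d\beta/d\alpha|\le 1$. So two tops $(\beta_i,a_i)$ on the same branch with $a_1<a_2$ satisfy $|\beta_1-\beta_2|\le a_2-a_1$, which means the walls are nested. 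For $\mathrm{ch}_0(v)=0$, ``generically nonzero'' is not enough. You need that $x=0$ forces $r'=0$, hence $y=0$, so the wall is empty. Otherwise the centre $-y/2x=v_2/(H\cdot v_1)$ does not depend on $w$, so the walls are concentric.

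In (7), the mechanism you give for the moving heart fails. When a $\mu$-HN factor $K$ of $H^{-1}(A)$ or $H^0(A)$ reaches $\mu_\beta(K)=0$, the line $\beta=\mu(K)$ is the vertical wall of $v(K)$, not of $v$, $v(A)$ or $v(B)$. Such lines can cross $W$, and (2) says nothing about them. What controls the heart is semistability of $A$ itself. Because $\phi^\pm$ vary continuously, an object that stays semistable can leave $\mathcal P((0,1])=Coh^\beta(X)$ only where its own $\mathfrak{Im}\,Z$ vanishes. That is the vertical wall of $v(A)$, which misses a semicircular $W$ by (2) applied to $v(A)$; note that $W$ is also a numerical wall for $v(A)$ and $v(B)$.

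So (7) reduces to showing that $A$ and $B$ stay semistable along $W$. Closedness of the semistable locus is Bridgeland's result. Openness along $W$ does not follow from local finiteness, since $W$ is itself a wall for $v(A)$; it needs the following argument, which you do not give:
\begin{itemize}
\item Suppose $A$ is $\sigma_p$-semistable at $p\in W$ but unstable at $q_n\in W$ with $q_n\to p$.
\item By the support property you can pass to a subsequence on which the first HN factor has a fixed class $\gamma$.
\item Continuity of $\phi^+$ gives $Z_p(\gamma)\in\mathbb R_{>0}Z_p(A)$. Hence $W_\gamma(v(A))$ passes through $p$, and by (2) it equals $W$.
\item Then $Z_{q_n}(\gamma)$ and $Z_{q_n}(A)$ lie on the same ray of the upper half-plane. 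This contradicts $\phi_{q_n}(\gamma)=\phi^+_{q_n}(A)>\arg Z_{q_n}(A)/\pi$.
\end{itemize}
With openness and closedness on the connected set $W$, you get $A,B,E\in\mathcal P(\phi)$ at every point of $W$. The triangle $A\to E\to B$ is therefore a short exact sequence in the heart everywhere along $W$.
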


Building upon the foundational result established in Theorem \ref{teoalfbet}, which states that the pair $\sigma_{\beta, \alpha} = (Z_{\beta,\alpha}, Coh^\beta (X))$ defines a stability condition on the bounded derived category $D^b (X)$, we now turn our attention to understanding the behavior of the stability of coherent sheaves as we navigate the wall and chamber structure of the $(\beta,\alpha)$-plane.

\begin{cor}[\text{ \cite[Corollary 3.5]{bayer2015wall} }]
Given a class $v \in H^\ast_{alg} (X;\mathbb{Z})$. For objects of Mukai vector $v$, being $\sigma_{\beta,\alpha}$-(semi)stable is independent on the choice of $(\beta,\alpha)$ in any given chamber.
\end{cor}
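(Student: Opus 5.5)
The statement to prove is Corollary [Bayer–Macrì, Cor. 3.5]: for a fixed Mukai vector $v$, $\sigma_{\beta,\alpha}$-(semi)stability of objects of class $v$ is constant as $(\beta,\alpha)$ moves inside a chamber. The plan is to combine the continuity of the family $\sigma_{\beta,\alpha}$ (Theorem \ref{teoalfbet}) with a local finiteness statement for destabilizing classes. Then argue by contradiction: if stability changed inside a chamber, the change would have to be witnessed by a numerical wall, and a chamber contains none.

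The first step is local finiteness. Fix $v$ and a compact subset $K$ of the $(\beta,\alpha)$-upper half plane. I would show that only finitely many classes $w\in H^\ast_{alg}(X;\mathbb{Z})$ can occur as Mukai vectors of semistable subobjects (or HN factors) of a $\sigma$-semistable object $E$ of class $v$, for $\sigma\in K$. The tool is the support property. Such a $w$ satisfies $|Z_\sigma(w)|\le |Z_\sigma(v)|$, because phases lie in $(0,1]$ and $Z(E)=Z(A)+Z(E/A)$. Combined with $\|w\|\le C|Z_\sigma(w)|$, with $C$ uniform on $K$ by continuity, this bounds $\|w\|$ uniformly. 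Since $w$ ranges in a lattice, only finitely many classes remain.

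The second step is the key lemma. Suppose $E$ of class $v$ is $\sigma_0$-stable but not $\sigma_1$-stable (or semistable versus unstable), with $\sigma_0,\sigma_1$ in the same chamber. Join them by a path $\gamma$ inside the chamber; this uses that chambers are connected components of the complement of the walls. By the openness of stability along a continuous family, plus the finiteness above, the set of path parameters where $E$ is stable is open, and so is the set where it is unstable. At a boundary point $t_0$, $E$ is strictly semistable: there is a subobject $A\hookrightarrow E$ in $Coh^{\beta}(X)$ with $\phi(A)=\phi(E)$ and $v(A)$ not parallel to $v$, or else $E$ is stable nearby. The equality of phases says exactly $\gamma(t_0)\in W_{v(A)}(v)$, which contradicts $\gamma$ lying in a chamber.

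One case needs separate care: $v(A)$ parallel to $v$. There phases agree everywhere, so no genuine change of stability can happen there, and such subobjects can be discarded.

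I expect the main obstacle to be the openness/closedness step when the heart $Coh^\beta(X)$ itself changes with $\beta$. A subobject in one heart need not be a subobject in a nearby heart, so the naive argument breaks down. I would handle this through slicings rather than hearts. Continuity in $Stab(X)$ means that $\phi^\pm(E)$ vary continuously, and stability is detected by $\phi^+(E)=\phi^-(E)$ together with the absence of semistable factors of distinct non-parallel classes. Using the finite set of candidate classes from the first step, the HN filtration of $E$ is locally constant off the finitely many numerical walls $W_w(v)$ meeting $K$. This yields the claim.
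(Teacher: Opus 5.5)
Your argument is correct and is the standard proof behind this result. The paper gives no proof of its own; it only cites Bayer's Corollary 3.5, which rests on the same mechanism you use: the support property (uniform on compact sets) gives a locally finite family of walls, and along a path inside a chamber stability could only change at a strictly semistable point. At such a point the Jordan--H\"older factors either define a wall $W_{v(A)}(v)$ through it, or are all parallel to $v$, in which case $E$ keeps its stability type nearby.

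A few small slips should be fixed:
\begin{itemize}
\item The bound $|Z_\sigma(w)|\le|Z_\sigma(v)|$ holds only for subobjects of the \emph{same} phase, not for arbitrary semistable subobjects or HN factors. Same-phase subobjects are all you actually use, so the argument is unaffected.
\item In the all-parallel case, $E$ stays strictly semistable near $t_0$, not stable. This still contradicts $t_0$ being a boundary point.
\item The closing claim that HN filtrations are locally constant off the walls $W_w(v)$ is false for unstable objects. It is also unnecessary: openness of the unstable locus follows directly from continuity of $\phi^\pm$.
\end{itemize}
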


Now that we have established the independence of (semi)stability on the choice of stability condition within a given chamber of the $(\beta , \alpha)$-plane, the natural next step is to study the moduli spaces that parametrize the semistable objects for a fixed Mukai vector $v$. These moduli spaces encode crucial information about the behavior of (semi)stable objects as we vary the stability condition by moving between different chambers in the $(\beta,\alpha)$-plane.

\subsection{Moduli spaces of stable objects}

In this subsection, we delve into the study of moduli spaces that parametrize stable objects with respect to a given stability condition. Our goal is to understand how the structure of these moduli spaces varies as we change the stability condition. Furthermore, we investigate whether there exist stability conditions for which the associated moduli spaces coincide with the moduli spaces obtained under classical stability notions, such as Gieseker stability.

To lay the groundwork for this analysis, let us first recall some fundamental results regarding the structure and non-emptiness of moduli spaces of stable objects. These results serve as a foundation for our subsequent investigations of how the moduli spaces depend on the choice of stability condition.

A crucial question is whether these moduli spaces are non-empty for a given stability condition. This is closely related to the existence of stable objects for that condition. By leveraging the wall-crossing techniques developed in the previous sections, we establish conditions under which the moduli spaces are non-empty and study how their structure changes as we vary the stability condition.

\begin{teo}[\text{ \cite[Theorem 4.1]{bayer2015wall} }] \label{modulialfbet}
Consider a vector $v \in H^\ast_{alg} (X; \mathbb{Z})$, and let $\sigma = \sigma_{\beta,\alpha}$ be a stability condition that is not on any of the walls for the wall and chamber decomposition with respect to $v$. Then the coarse moduli space $M_\sigma (v)$ of $\sigma_{\beta,\alpha}$-stable objects of Mukai vector $v$ exists as a smooth projective irreducible holomorphic symplectic variety. It is non-empty if and only if $v^2 \geq -2$ and its dimension is given by $dim M_\sigma (v) = v^2 +2$
\end{teo}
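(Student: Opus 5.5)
This is \cite[Theorem 4.1]{bayer2015wall}, and within the paper I would simply invoke it. To see why it holds, I would follow the Bayer--Macr\`i strategy, reducing everything to the known theory of Gieseker moduli spaces of sheaves on K3 surfaces via wall-crossing. Throughout I take $v$ primitive, which is implicit in the statement. For non-primitive $v$, smoothness and irreducibility fail in general, and so does the bare condition $v^2\geq -2$.

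\textbf{Existence as a space.} Toda's work shows that $\sigma_{\beta,\alpha}$-semistable objects of class $v$ form a bounded family and an Artin stack of finite type. Genericity of $\sigma$ off the walls of $\mathcal W(v)$, together with primitivity of $v$, makes semistable equal to stable. So the stack is a $\mathbb G_m$-gerbe over an algebraic space $M_\sigma(v)$.

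\textbf{Smoothness and the symplectic form.} The deformation argument is local and formal. For a stable object $E$, $\operatorname{Hom}(E,E)=\mathbb C$, and Serre duality on the K3 surface gives $\operatorname{Ext}^2(E,E)\cong\operatorname{Hom}(E,E)^\vee=\mathbb C$. Mukai's trace argument then shows that obstructions vanish, so $M_\sigma(v)$ is smooth with tangent space $\operatorname{Ext}^1(E,E)$. Riemann--Roch gives
\[
\dim\operatorname{Ext}^1(E,E)=2-\chi(E,E)=v^2+2.
\]
The Yoneda pairing into $\operatorname{Ext}^2(E,E)\cong\mathbb C$ gives a nondegenerate holomorphic $2$-form, which is closed by Mukai's argument.

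\textbf{Projectivity, irreducibility and non-emptiness.} These global properties are where the real work lies, and I would move to the Gieseker chamber. For $\alpha\gg0$ the stable objects of class $v$ (with positive rank, say) are exactly the $H$-Gieseker stable sheaves, possibly after a twist. By Mukai, Huybrechts and Yoshioka, the Gieseker moduli space is a non-empty projective irreducible holomorphic symplectic manifold deformation equivalent to $\operatorname{Hilb}^{n}(X)$ precisely when $v^2\geq-2$. Conversely, if $M_\sigma(v)\neq\emptyset$, then $v^2+2=\dim\geq 0$ forces $v^2\geq -2$. To transport the properties to an arbitrary generic $\sigma$, I would work as follows.
\begin{enumerate}
\item Use the positivity lemma of Bayer--Macr\`i: $\sigma$ induces a nef divisor class $\ell_\sigma$ on any proper family of $\sigma$-semistable objects, strictly positive on curves of S-equivalence classes. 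This yields projectivity of $M_\sigma(v)$ once properness is known, and properness comes from Langton-type arguments (Abramovich--Polishchuk).
\item For irreducibility and non-emptiness, connect $\sigma$ to the Gieseker chamber through finitely many walls; local finiteness comes from the nested-semicircle structure of \cite[Proposition 6.22]{macri2017lectures}. Show that crossing each wall preserves birationality, or at least non-emptiness and irreducibility. Alternatively, use Toda and Yoshioka's Fourier--Mukai transforms to map $M_\sigma(v)$ isomorphically onto a Gieseker moduli space for a different Mukai vector with the same square.
\end{enumerate}

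The main obstacle is this last wall-crossing analysis. At a wall, one must classify destabilizing subobjects through the hyperbolic lattice they generate with $v$. One must then show that the locus of destabilized objects has positive codimension, or else is contracted by $\ell_{\sigma_0}$ without disconnecting the space. I would not redo this in the paper; it is exactly the content of \cite[Theorem 4.1]{bayer2015wall}, which I cite.
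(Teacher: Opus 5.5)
Your proposal agrees with the paper, which gives no argument of its own and simply quotes \cite[Theorem 4.1]{bayer2015wall}. You are right that $v$ must be primitive, a hypothesis of the cited result that the transcription here drops; for non-primitive $v$, however, what breaks is properness and the non-emptiness criterion, not smoothness of the stable locus. One caution on your heuristic sketch: the positivity lemma only gives a strictly nef class, and strict nefness on a proper space does not by itself imply projectivity, so that step needs input beyond the positivity lemma.
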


Consider now the moduli space of $H$-Giesker-stable sheaves and lets compare this set with the moduli space of $\sigma_{\beta,\alpha}$-stable objects with $\alpha >> 0$.

\begin{teo}[\text{ \cite[Theorem 4.4]{bayer2015wall} }] \label{teomodul}
Let $v=(v_0,v_1,v_2)$ a class in $H^\ast_{alg} (X ; \mathbb{Z})$ having either positive rank $v_0>0$, or satisfying $v_0=0$ with $v_1$ being effective. Then there exists $\alpha_0$ such that for all $\alpha \geq \alpha_0$ and all $\beta > \frac{H \cdot v_1}{H^2 \cdot v_0}$ (or $\beta > \frac{v_2}{H \cdot v_1}$ in case $v_0=0$), the moduli space $M_{\sigma_{\beta , \alpha}} (v)$ is equal to the moduli space $M_H (v)$ of $H$-Gieseker-stable sheaves of class $v$. More precisely, and object $E \in D^b (X)$ with $v(E)=v$ is $\sigma_{\beta, \alpha}$-stable if and only if it is the shift of a Gieseker-stable sheaf.
\end{teo}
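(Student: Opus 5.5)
The plan is to follow Bridgeland's large volume limit argument, as adapted in \cite{bayer2015wall}. The idea is to expand $Z_{\beta,\alpha}$ in powers of $\alpha$ and compare phases order by order: the leading term recovers slope, and the next term recovers the Gieseker (reduced Hilbert polynomial) comparison. Fix $v$ with $v_0>0$; the case $v_0=0$ with $v_1$ effective is handled the same way, with torsion sheaves playing the role of the positive rank sheaves. Fix $\beta$ on the side of the unique vertical wall of $v$ given by (4) of \cite[Proposition 6.22]{macri2017lectures}, namely the side on which sheaves of class $v$ have all HN factors with $\mu_\beta>0$. Then every $\mu$-semistable, and in particular every Gieseker-semistable, sheaf $E$ of class $v$ lies in $T^\beta\subset Coh^\beta(X)$.

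\emph{Step 1 (uniform $\alpha_0$).} By (5) of the same proposition, the numerical walls for $v$ on this side of the vertical wall are strictly nested semicircles. I will show that only finitely many of them are actual walls, so that there is a largest one. The approach is to use the Bogomolov--Gieseker inequality (together with $w^2\ge -2$ for classes of stable objects, Theorem \ref{modulialfbet}) to bound the classes $w$ of possible destabilizing subobjects, and then argue via the support property. Taking $\alpha_0$ above the top of the largest actual wall, and using that stability is constant in each chamber (\cite[Corollary 3.5]{bayer2015wall}), it suffices to compare $\sigma_{\beta,\alpha}$-stability with Gieseker stability for a single $\alpha\gg 0$.

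\emph{Step 2 (Gieseker stable $\Rightarrow$ $\sigma$-stable).} Let $E$ be Gieseker-stable of class $v$. By Proposition \ref{propsubobj}, a subobject $A_\cdot\hookrightarrow E_\cdot$ in $Coh^\beta(X)$ is a sheaf $A\in T^\beta$ with a map $f\colon A\to E$ such that $\ker f\in F^\beta$. I expand $Z_{\beta,\alpha}(A)$ and $Z_{\beta,\alpha}(E)$ as $\alpha\to\infty$. The real parts are dominated by $-\tfrac{\alpha^2H^2}{2}\,rk$ and the imaginary parts are $\alpha H\cdot ch_1^\beta$. Hence positive rank objects have phase tending to $1$, and the order of two phases is governed first by $\mu_\beta$ and then, when the slopes agree, by the $ch_2$ term, which is exactly the reduced Hilbert polynomial comparison. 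Since $\ker f\in F^\beta$ contributes with the sign that lowers the phase of the image, the inequality $\phi(A)<\phi(E)$ for $\alpha\gg0$ reduces to Gieseker stability of $E$ applied to $\mathrm{im}(f)$. Torsion subsheaves cannot occur because $E$ is torsion free, and rank zero quotients are ruled out by the same expansion.

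\emph{Step 3 ($\sigma$-stable $\Rightarrow$ Gieseker stable sheaf).} Let $E\in Coh^\beta(X)$ be $\sigma_{\beta,\alpha}$-stable of class $v$ for all $\alpha\ge\alpha_0$. I would first show that $H^{-1}(E)=0$ and that $H^0(E)$ has no $0$-dimensional torsion; the latter follows because a skyscraper $\OO_x$ has phase $1$ and $Hom(\OO_x,E)\neq0$ would destabilize $E$. I expect the vanishing of $H^{-1}(E)$ to be the main obstacle, since $H^{-1}(E)[1]$ has phase tending to $0$ and so does not destabilize $E$ naively. My first attempt is Bridgeland's: apply the Bogomolov inequality to the HN factors of $F=H^{-1}(E)$ and of $H^0(E)$, show that a nonzero $F$ forces the quotient $E\twoheadrightarrow H^0(E)$ to have phase strictly below $\phi(E)$ at order $\alpha^{-1}$ for $\alpha\gg0$, contradicting stability, and use the boundedness from Step 1 to make this uniform. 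Once $E$ is known to be a torsion-free sheaf, I run the expansion of Step 2 in reverse: any Gieseker-destabilizing saturated subsheaf lies in $T^\beta$ with torsion-free quotient, so it is a $Coh^\beta$-subobject, and it destabilizes for $\alpha\gg0$. This gives the identification $M_{\sigma_{\beta,\alpha}}(v)=M_H(v)$.
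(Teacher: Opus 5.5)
The paper does not prove this statement; it quotes it from \cite{bayer2015wall}, where it is Bridgeland's large volume limit. Your overall plan is the standard route: expand $Z_{\beta,\alpha}$ in $\alpha$, compare first via $\mu_\beta$ and then via $ch_2/ch_0$, and obtain a uniform $\alpha_0$ from finiteness of walls. The gap is that the asymptotics you build on have the wrong sign, and this is not cosmetic.

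For the central charge of Theorem \ref{teoalfbet}, $\mathfrak{Re}\,Z_{\beta,\alpha}(E)=\frac{\alpha^2H^2}{2}v_0(E)+O(1)$. Hence a positive-rank object of $Coh^\beta(X)$ has phase $\phi\approx\frac{2}{\pi\alpha}\bigl(\frac{H\cdot v_1}{H^2v_0}-\beta\bigr)\to 0$, not $1$. Objects $F[1]$ with $0\neq F\in F^\beta$ have phase tending to $1$, sheaves with one-dimensional support have phase tending to $\frac12$, and skyscrapers have phase $1$. With your sign every comparison in Step 2 reverses. A subsheaf of larger slope would get \emph{smaller} phase, so $\sigma_{\beta,\alpha}$-stability would match the opposite of Gieseker stability. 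Also $Z(\ker f)$ would lie in the third quadrant, so adding it would raise rather than lower the phase. Both of your Step 2 assertions hold only for the sign opposite to the one you wrote.

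The damage is clearest in Step 3, where the ``main obstacle'' is an artifact of the sign. With the correct sign, $H^{-1}(E)[1]\hookrightarrow E$ has phase tending to $1$ while $\phi(E)\to 0$, so $H^{-1}(E)=0$ is immediate for $\alpha$ large. Likewise every torsion subsheaf of $H^0(E)$ destabilizes $E$, including one-dimensional torsion (phase $\to\frac12$). You never excluded that case before declaring $E$ torsion free, and in your convention you could not have. Conversely, the argument you propose fails in your own convention. If $H^{-1}(E)\neq 0$, then $[E]=[H^0(E)]-[H^{-1}(E)]$ with $\mathrm{rk}\,H^{-1}(E)>0$ and $\mu_\beta(H^{-1}(E))\le 0<\mu_\beta(H^0(E))$, which gives $\mu_\beta(H^0(E))<\mu_\beta(E)$. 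With $\phi\approx 1-c\,\mu_\beta/\alpha$, the quotient $H^0(E)$ then has \emph{larger} phase than $E$, and there is no contradiction. With the correct sign it does destabilize, but only as the see-saw counterpart of the subobject $H^{-1}(E)[1]$, and no Bogomolov input is needed.

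Two further points. First, the ``$\alpha\gg0$'' in Step 2 must be uniform over all subsheaves $\mathrm{im}(f)\subset E$. That needs an explicit bound (Bogomolov on the HN factors of $\mathrm{im}(f)$, plus discreteness of $\mu_\beta$), not just the expansion for each subobject separately. Second, the side you choose, $\beta<\frac{H\cdot v_1}{H^2v_0}$, is the right one, but it is opposite to the inequality printed in the statement. For $\beta>\frac{H\cdot v_1}{H^2v_0}$, a non-locally-free Gieseker-stable $E$ lies in $F^\beta$, and $E[1]$ is destabilized by the phase-one subobject $E^{\vee\vee}/E$. You should state explicitly that you are proving the corrected version.
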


\section{Stability on K3 surfaces}

In this chapter, we consider a pair $(X,H)$ where $X$ is a smooth K3 surface over $\mathbb{C}$ and $H$ is an hyperplane section of $X$ such that
\begin{itemize}
    \item[$(\ast \ast)$] 
    \begin{center}
     $H^2$ divides $H.D$ for all curve classes $D$ on $X$
    \end{center} 
\end{itemize} 
An example of such pairs is the case when $H$ is an hyperplane section of $X$ such that $Pic(X) \cong \mathbb{Z} . H$. For a more detailed discussion of the stability conditions in this chapter, please see Appendix B. Throughout this chapter, we will reference the necessary theory to clarify the notation.

Let $d,g \in \mathbb{Z}$ with $g>1$. The moduli space of H-Gieseker stable sheaves with Mukai vector $v=(0,H,d+1-g)$ is $M_H (v)$ (see \ref{teoalfbet}), the moduli space $M_H (v)$ parametrizes one-dimensional sheaves $F$ of Euler characteristic $d+1-g$ where the support $|F|$ corresponds to a curve in the linear system $|H|$. 

\vspace{1mm}

We consider the stability condition $\sigma_{\beta,\alpha}$ associated to $\alpha,\beta \in \mathbb{R}$ with $\alpha>0$. We study the wall-crossing for the moduli space $M_{\sigma_{\beta, \alpha}} (v)$ (see \ref{modulialfbet}), with $v=(0,H,d+1-g)$ as above. From Theorem \ref{teomodul} we have $M_{\sigma_{\beta, \alpha}} (v) = M_H (v)$ for $\alpha >> 0$, and we want to find the walls that bounds this chamber called the Gieseker-chamber, for wall-chamber structure see \ref{defiwall}.

Consider $\beta=0$. In this case $\mathfrak{Im}(Z_{0, \alpha} (\OO_X))=0$, this means we have stability conditions for $$\alpha > \alpha_0 = \sqrt{\frac{2}{H^2}},$$ for details see \ref{teoalfbet}.
For these stability conditions, notice that $\OO_X [1]$ is an object in the category $Coh^0 (X)$ with \[ \mathfrak{Im} (Z_{0,\alpha} (\OO_X [1]))=0, \] i.e. of slope $\mu_0 (\OO_X [1])=+\infty$ (see \ref{defimu}), therefore it is automatically semistable. Using Proposition \ref{propsubobj} $\OO_X [1]$ has no subobjects in $Coh^0 (X)$ (see \ref{Cohbeta}), and so $\OO_X [1]$ is stable for $\beta=0$.

\begin{lema}[\text{\cite[Lemma 6.1]{bayer2015wall}}] \label{Lemmm421}
For $\alpha> \alpha_0$ and $\beta=0$, we have an isomorphism $M_{\sigma_{0, \alpha}} (v) = M_H (v)$ identifying the stable objects with stable sheaves.
\end{lema}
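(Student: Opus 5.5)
The claim is that for $v=(0,H,d+1-g)$ there is no wall for $v$ along the vertical ray $\beta=0$, $\alpha>\alpha_0$. Once that is known, Theorem \ref{teomodul} finishes the argument. For $v_0=0$ that theorem gives $M_{\sigma_{\beta,\alpha}}(v)=M_H(v)$ for $\alpha\gg 0$ and $\beta>\frac{v_2}{H\cdot v_1}$. The numerical walls for $v$ are strictly nested semicircles (case $ch_0(v)=0$ of the wall-structure proposition), and stability does not change inside a chamber (\cite[Corollary 3.5]{bayer2015wall}). So the Gieseker chamber extends down the line $\beta=0$ until the first actual wall. If we show that no actual wall meets $\{\beta=0,\alpha>\alpha_0\}$, then every $\sigma_{0,\alpha}$ lies in the Gieseker chamber, and we get $M_{\sigma_{0,\alpha}}(v)=M_H(v)$, with stable objects corresponding to stable sheaves. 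One subtlety is that the Gieseker statement is only given for $\beta$ large. I would handle this by noting that walls for $ch_0=0$ are nested semicircles and the Gieseker chamber is the unbounded region outside all of them. This region contains points with $\beta=0$ and $\alpha\gg0$, and these are joined to the large-$\beta$ region without crossing any wall.

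To rule out walls, take $\alpha>\alpha_0$ and suppose some $\sigma_{0,\alpha}$-semistable $E$ of class $v$ sits in a short exact sequence $0\to A\to E\to B\to 0$ in $Coh^0(X)$ with $\phi(A)=\phi(E)$. Taking cohomology sheaves gives $0\to H^{-1}(A)\to 0$, so $A$ is a sheaf in $T^0$. There is also a long exact sequence $0\to H^{-1}(B)\to H^0(A)\to E\to H^0(B)\to 0$, where $H^{-1}(B)\in F^0$. Write $v(A)=(r,cH+D',s)$. Then $\mathfrak{Im} Z_{0,\alpha}(A)=\alpha H\cdot ch_1(A)$ and $\mathfrak{Im} Z_{0,\alpha}(E)=\alpha H^2$. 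Since both $A$ and $B$ lie in the heart, we need $0\le H\cdot ch_1(A)\le H^2$.

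This is exactly where hypothesis $(\ast\ast)$ enters: $H^2$ divides $H\cdot ch_1(A)$, so $H\cdot ch_1(A)\in\{0,H^2\}$. Consequently one of $A$ or $B$ has $\mathfrak{Im} Z=0$, that is, phase $1$. For $E$ of phase $<1$ this cannot give a destabilizing equality of phases. Phase of $E$ equal to $1$ would require $\mathfrak{Re} Z_{0,\alpha}(E)=-(d+1-g)\le 0$. I would treat that case separately by checking that $\mathfrak{Im}$-zero objects are rigidly controlled.

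I expect the main obstacle to be the boundary case where the phase-$1$ piece has $\mathfrak{Re} Z$ of the right sign to be a genuine subobject or quotient with the same phase. This happens exactly when $H\cdot ch_1=0$ and the piece is built from $\OO_X[1]$-type objects or from sheaves with $\mu_0=0$. Here I would use the condition $\mathfrak{Re} Z_{0,\alpha}(\delta)>0$ for roots, which holds for $\alpha>\alpha_0$ by Theorem \ref{teoalfbet}, together with the stability of $\OO_X[1]$ and the fact that it has no subobjects (Proposition \ref{propsubobj}). These should show that any such object has $\mathfrak{Re} Z$ strictly of one sign, so it cannot share the phase of $E$. The argument would then follow \cite[Lemma 6.1]{bayer2015wall}.
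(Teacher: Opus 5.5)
Your proposal is correct and follows the argument behind \cite[Lemma 6.1]{bayer2015wall}, which the paper only quotes and glosses as ``no wall meets the segment $\beta=0$'': by $(\ast\ast)$, $\mathfrak{Im} Z_{0,\alpha}(v)=\alpha H^2$ is the minimal positive value of $\mathfrak{Im} Z_{0,\alpha}$ on $Coh^0(X)$, so no object of class $v$ is strictly semistable along the ray, and the Gieseker chamber of Theorem \ref{teomodul} extends down to $\alpha_0$. The ``phase $1$'' case for $E$ and the ``main obstacle'' you anticipate are vacuous, since $\mathfrak{Im} Z_{0,\alpha}(E)=\alpha H^2>0$ forces $\phi(E)<1$. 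The only input from $\alpha>\alpha_0$ is that $Z_{0,\alpha}$ is a genuine stability function: a nonzero piece with $\mathfrak{Im} Z=0$ then has $Z\in\mathbb{R}_{<0}$, so the complementary piece has the same imaginary part as $E$ but a different real part, hence a different phase. (Your step ``$A$ is a sheaf'' presupposes $E$ is a sheaf, but it is not needed.)
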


In other words, there is no wall intersecting the line segment $\beta=0, \alpha= \left( \frac{2}{H^2} , \infty \right).$
The interplay between Brill-Noether theory and wall-crossing theory is fundamental in this section. While Brill-Noether theory allows us to study properties of the Lazarsfeld-Mukai sheaves $F_{V,L}$ associated to a linear series $(L,V)$, wall-crossing theory provides the tools to analyze the stability of these sheaves when restricted to curves living on K3 surfaces. The conditions presented in Lemma \ref{Lem3} complement the result presented in Lemma \ref{Lemmm421}.

\begin{lema}[\text{\cite[Lemma 6.2]{bayer2015wall}}] \label{Lem3}
There is a wall bounding the Gieseker-chamber where $Z_{\beta,\alpha} (\OO_X)$ aligns with $Z_{\beta,\alpha} (v)$. The sheaves $L \in M_{\sigma_{\beta,\alpha}} (v)$ getting destabilised are exactly those with $h^0 (L)>0$, and the destabilising short exact sequence are given by \begin{equation} \label{eq:1}
    H^0(L)\otimes \OO_X \hookrightarrow L \twoheadrightarrow W
\end{equation} for some object $W$ that remains stable at the wall.
\end{lema}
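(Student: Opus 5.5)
This is Bayer–Macrì's Lemma 6.2, and I would reproduce their argument for $v=(0,H,d+1-g)$. The first step is to locate the wall. A point $\sigma_{\beta,\alpha}$ lies on the numerical wall $W_{v(\OO_X)}(v)$ exactly when $Z_{\beta,\alpha}(\OO_X)$ and $Z_{\beta,\alpha}(v)$ are real multiples of each other. Here $v(\OO_X)=(1,0,1)$. This gives
\[
Z_{\beta,\alpha}(v) = \beta H^2-(d+1-g)+i\alpha H^2,
\qquad
Z_{\beta,\alpha}(\OO_X)=\tfrac{(\alpha^2-\beta^2)H^2}{2}-1-i\alpha\beta H^2 .
\]
Imposing proportionality yields a semicircle in the $\beta<0$ half-plane, centred on the $\beta$-axis. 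By Lemma \ref{Lemmm421} this wall does not meet the ray $\beta=0$. Since the walls for $v$ are nested semicircles (part (6) of the cited Proposition of Macrì–Schmidt), I would argue that it is the largest wall on the left side, i.e. the boundary of the Gieseker chamber. I would also check that $\OO_X$ lies in $Coh^\beta(X)$ for $\beta<0$ and is $\sigma_{\beta,\alpha}$-stable there. This holds because $\OO_X$ is $\mu$-stable of rank one and has no spherical destabilisers above the hyperbola where $\mathfrak{Re} Z(\OO_X)=0$.

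The second step identifies which objects are destabilised. Take $L\in M_H(v)$, a stable pure sheaf supported on a curve of $|H|$. Any nonzero section gives a map $\OO_X\to L$. For a whole space of sections I would consider the evaluation map $H^0(L)\otimes\OO_X\to L$. Its kernel as a map of sheaves is a subsheaf of $H^0(L)\otimes \OO_X$ of $c_1=-H$ (up to torsion-free corrections), and I need it to lie in $F^\beta$. By Proposition \ref{propsubobj}, the evaluation map is then an injection in $Coh^\beta(X)$. Condition $(\ast\ast)$ should control exactly this: a saturated subsheaf of the trivial bundle has $H\cdot c_1\le 0$, and divisibility by $H^2$ forces slopes to be integer multiples of $H^2$, so the slopes stay on the correct side of $\beta$ for $\beta$ near the wall. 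The cone $W$ of the evaluation map is then a quotient in the heart. Since $Z(H^0(L)\otimes\OO_X)$ is a positive multiple of $Z(\OO_X)$, it aligns with $Z(L)$ on the wall, so $L$ is strictly semistable on the wall and unstable below it. Conversely, if $h^0(L)=0$ then $\mathrm{Hom}(\OO_X,L)=0$, so no destabilising subobject of class a multiple of $v(\OO_X)$ exists. Because the walls are nested and this is the first one, no other subobject can destabilise $L$ there, and $L$ remains stable.

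The last step is to prove $W$ is stable on the wall and that \eqref{eq:1} is the Jordan–Hölder filtration. I would show $\mathrm{Hom}(\OO_X,W)=0$ using the long exact sequence, which is where the evaluation map being an isomorphism on $H^0$ enters. Then any destabilising subobject of $W$ would give a wall strictly larger than, or crossing, the current one, contradicting nestedness.

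I expect the main obstacle to be the claim that the kernel of the evaluation map lies in $F^\beta$, together with ruling out other walls outside this one. I would handle both by a Bogomolov–Gieseker type inequality bounding the radius of possible walls. Hypothesis $(\ast\ast)$ should eliminate walls coming from line bundles $\OO_X(-D)$ with $0<H\cdot D<H^2$.
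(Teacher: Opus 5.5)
The paper gives no proof of this lemma; it only cites Bayer's Lemma~6.2. Measured against that argument, two steps of your plan fail.

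First, the wall is not contained in $\{\beta<0\}$. The alignment condition is $\alpha^2+(\beta-\beta_c)^2=\beta_c^2+2/H^2$ with $\beta_c=(d+1-g)/H^2$. Since $Z_{0,\alpha_0}(\OO_X)=0$, this semicircle passes through the point $(0,\alpha_0)$. That is exactly the idea you are missing for ``bounding the Gieseker chamber''. Because $v$ has rank $0$, the locus $\mathfrak{Re}\,Z(v)=0$ is the vertical line $\beta=\beta_c$, so every semicircular wall for $v$ is centred at $\beta_c$. Any wall strictly outside this one would therefore cross the ray $\{\beta=0,\ \alpha>\alpha_0\}$, which Lemma \ref{Lemmm421} forbids. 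Nestedness alone does not tell you which wall is outermost, and no Bogomolov-type bound on radii is needed: all the input is already in Lemma \ref{Lemmm421}.

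Second, your appeals to nestedness in the last two steps are invalid. An object destabilising $L$ (or $W$) at a point $\bar\sigma$ of this wall has central charge aligned with $Z(v)$ at $\bar\sigma$. It therefore defines the same numerical wall, not a larger or crossing one. It also need not have class a multiple of $v(\OO_X)$, and it may appear as a quotient rather than a subobject. Knowing the wall is outermost only controls stability above it, not on it.

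What you need is the Jordan--H\"older analysis on the wall, which is Lemma \ref{lemabridg}: a $\bar\sigma$-semistable object of class $v-t\,v(\OO_X)$ is stable iff $\mathrm{Hom}(\OO_X,\cdot)=0=\mathrm{Hom}(\cdot,\OO_X)$.
\begin{itemize}
\item For $h^0(L)=0$, apply it with $t=0$, using $\mathrm{Hom}(\OO_X,L)=H^0(L)=0$ and $\mathrm{Hom}(L,\OO_X)=0$ because $L$ is torsion.
\item For $W$, your vanishing $\mathrm{Hom}(\OO_X,W)=0$ is right (it also uses $H^1(\OO_X)=0$). You must add $\mathrm{Hom}(W,\OO_X)=0$, which follows from $\mathrm{Hom}(W,\OO_X)\hookrightarrow \mathrm{Hom}(L,\OO_X)=0$.
\end{itemize}

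A smaller point concerns injectivity of the evaluation map. Your $F^\beta$ argument must rule out nonzero subsheaves $K'$ of the kernel $K$ with $H\cdot c_1(K')=0$. Such a $K'$ lies in some $V'\otimes\OO_X$ with $V'\subset H^0(L)$ and zero-dimensional cokernel, and purity of $L$ then forces $V'=0$. After that, $K$ is $\mu$-semistable of slope $-H^2/h^0(L)$, so $K\in F^\beta$ only when $\beta\ge -1/h^0(L)$, with $\beta$ normalised as in $Z_{\beta,\alpha}$. Whether the whole left arc satisfies this is a Brill--Noether-type inequality you cannot assume; that is essentially what this lemma is used to prove in Bayer's setting. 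So run the argument at points of the wall near $(0,\alpha_0)$, and then use that an actual wall is actual along the whole numerical wall.
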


\begin{lema}
Let $\Bar{\sigma}$ be a stability condition on the wall constructed above. Let $W$ be an object of class $v - t v(\OO_X)$ for some $t \in \mathbb{Z}$, and assume that $W$ is $\Bar{\sigma}$-semistable. Then, $W$ is stable if and only if $Hom(\OO_X,W)=0=Hom(W,\OO_X)$. \label{lemabridg}
\end{lema}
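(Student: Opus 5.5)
The plan is to reduce the statement to facts about $\bar\sigma$-stable objects whose Mukai vectors lie in the rank-two lattice attached to the wall. By Lemma \ref{Lem3}, $Z_{\bar\sigma}(\OO_X)$ is aligned with $Z_{\bar\sigma}(v)$. Since $v(W)=v-t\,v(\OO_X)$, the objects $W$ and $\OO_X$ (or its shift in $Coh^\beta(X)$) then have the same $\bar\sigma$-phase.

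\emph{Step 1: $\OO_X$ is $\bar\sigma$-stable.} I would show that $\OO_X$ (up to shift) is stable along the whole wall. The argument is that $v(\OO_X)=(1,0,1)$ is spherical, and a destabilising subobject in $Coh^\beta(X)$ would have $H\cdot c_1$ strictly between the values allowed for $\OO_X$. Hypothesis $(\ast\ast)$ says $H^2 \mid H\cdot D$ for every curve class, so $\mu$-values of subsheaves are integral in units of $H^2$. This rules out such intermediate subobjects, in the same way one shows that $\OO_X[1]$ is stable at $\beta=0$ before Lemma \ref{Lemmm421}.

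\emph{Step 2: the direction ($\Rightarrow$).} If $W$ is $\bar\sigma$-stable, then any nonzero morphism between $\OO_X$ and $W$ in either direction is a morphism between stable objects of the same phase, hence an isomorphism. This is impossible, because $v(W)$ has $v_1=H\neq 0$ while $v(\OO_X)$ has $v_1=0$. Hence $Hom(\OO_X,W)=0=Hom(W,\OO_X)$.

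\emph{Step 3: the direction ($\Leftarrow$).} Suppose $W$ is strictly $\bar\sigma$-semistable, and take a Jordan--Hölder filtration. All its factors $A_i$ are $\bar\sigma$-stable of the same phase as $W$. Hence their Mukai vectors lie in
\[
\mathcal H=\{w\in H^\ast_{alg}(X;\mathbb Z): Z_{\bar\sigma}(w)\in \mathbb R\cdot Z_{\bar\sigma}(v)\}.
\]
Using $(\ast\ast)$ and the choice of a generic point $\bar\sigma$ on the wall, I would show that $\mathcal H=\mathbb Z v+\mathbb Z v(\OO_X)$, and that this lattice is hyperbolic. Stable objects satisfy $v(A_i)^2\ge -2$, and in $\mathcal H$ the only $(-2)$-classes of the relevant phase are $\pm v(\OO_X)$. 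Moreover, a stable spherical object of class $v(\OO_X)$ must be $\OO_X$ itself, by rigidity and Step 1. The goal is to prove that some Jordan--Hölder factor equals $\OO_X$ (up to the appropriate shift). If such a factor occurs as the first subobject or as the last quotient, it gives a nonzero element of $Hom(\OO_X,W)$ or of $Hom(W,\OO_X)$. I would arrange this by pushing all copies of $\OO_X$ to one end of the filtration: the Ext-groups among the factors and $\OO_X$ are controlled by the Mukai pairing, and $\langle v(\OO_X),v(A)\rangle$ has a fixed sign for the other stable classes in $\mathcal H$.

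\emph{Main obstacle.} The hardest point is excluding Jordan--Hölder decompositions in which no factor is $\OO_X$, so that all factors $A_i$ have $v(A_i)^2\ge 0$. My first attempt is the positive-cone argument of Bayer--Macrì. The factor classes lie in the positive cone of $\mathcal H$, all on the ray determined by $Z_{\bar\sigma}$, so they pair non-negatively. Writing $v(W)=\sum v(A_i)$, I would then compare $\sum v(A_i)^2 + 2\sum_{i<j}\langle v(A_i),v(A_j)\rangle$ with $(v-t\,v(\OO_X))^2$, together with the constraint that $Im Z$ of each factor lies in $[0,Im Z(W)]$. With the lattice $\mathcal H$ written explicitly in the basis $v,v(\OO_X)$, this should force every factor to be proportional to $v(W)$, which contradicts strict semistability.
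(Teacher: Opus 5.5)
The paper quotes this lemma from Bayer's wall-crossing paper and gives no proof, so I compare your attempt with the standard lattice argument. Your Step 2 is fine once $\OO_X$ is known to be $\Bar\sigma$-stable. Step 3, however, has a genuine gap.

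\textbf{The spherical-class claim is false.} You assert that the only $(-2)$-classes of the relevant phase in $\mathcal H$ are $\pm v(\OO_X)$. Write $m=g-1-d$. Then
$$(a v+b\,v(\OO_X))^2=(2g-2)a^2+2mab-2b^2 .$$
For instance, when $d=0$ the class $v-v(\OO_X)=(-1,H,-g)=v(\OO_X(-H)[1])$ is spherical. It is the class of the object $W$ attached to $L=\OO_C$ in Lemma \ref{Lem3}. More generally, $\mathcal H$ typically contains infinitely many $(-2)$-classes with $Z_{\Bar\sigma}$ on the ray of $Z_{\Bar\sigma}(v)$.

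So your dichotomy is not exhaustive: ``some factor is $\OO_X$'' versus ``all factors satisfy $v(A_i)^2\ge 0$'' misses a factor that is a stable spherical object other than $\OO_X$. Separately, the positive-cone computation in your last paragraph cannot produce a contradiction. It only gives $v(W)^2\ge\sum_i v(A_i)^2\ge 0$, and $v(W)^2\ge 0$ does occur: for $W=F_L[1]$ with $t=h^0(L)=r+1$ one finds $v(W)^2=2(\rho(g,r,d)-1)$. Nothing there forces the factors to be proportional to $v(W)$.

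\textbf{What is missing: positivity and integrality of the $v$-coefficient.} The $\mathfrak{Im}Z$ constraint you mention is the right ingredient, but it must bound this coefficient. By $(\ast\ast)$ and alignment, each Jordan--H\"older factor satisfies $H\cdot c_1(A_i)=a_iH^2$ and $\mathrm{rk}(A_i)=b_i$ with $a_i,b_i\in\mathbb Z$. The image of $v(A_i)$ under $(r,D,s)\mapsto(r,\tfrac{H.D}{H^2}H,s)$ is then $a_iv+b_iv(\OO_X)$, and Hodge index gives $-2\le v(A_i)^2\le q(a_i,b_i):=(a_iv+b_iv(\OO_X))^2$. Note that unless $\rho(X)=1$, $\mathcal H$ itself is \emph{not} $\mathbb Zv+\mathbb Zv(\OO_X)$: on the $(\beta,\alpha)$-plane $Z$ only sees $H\cdot c_1$, and choosing a generic point on the wall does not change this.

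Now $\sum_ia_i=1$, and every $a_i\ge0$. Indeed, if $a_i\le-1$, then $\mathfrak{Im}Z(A_i)=\alpha H^2(a_i-\beta b_i)>0$ forces $b_i>|a_i|/|\beta|$, so $q(a_i,b_i)<-2$ once $|\beta|$ is small. For example, $|\beta|\le 1/\sqrt g$ suffices when $d\le g-1$. This holds near the endpoint $(0,\sqrt{2/H^2})$ of the wall, and hence on the whole wall segment, since $Z$ never vanishes on spherical or positive classes there.

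Consequently exactly one factor has $a_i=1$. All the others have projected class $b_iv(\OO_X)$ with $-2b_i^2\ge-2$ and $b_i>0$, so by Hodge index their class is exactly $v(\OO_X)$. By Mukai's $\chi=2$ argument each of them is $\OO_X$. With a single factor different from $\OO_X$, the first or the last factor of a strictly semistable $W$ is $\OO_X$, which gives the nonzero $\mathrm{Hom}$ immediately.

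Your reordering step is therefore unnecessary, and it would not work as stated. The Mukai pairing only computes $\chi(\OO_X,A)=-\mathrm{ext}^1(\OO_X,A)$, which is not a vanishing statement.

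\textbf{Step 1.} The same count with $\sum a_i=0$ shows that $\OO_X$, which is $\Bar\sigma$-semistable by Lemma \ref{Lem3}, is in fact $\Bar\sigma$-stable. That is how Step 1 should go. Integrality from $(\ast\ast)$ alone does not exclude subobjects $A\subset\OO_X$ with $H\cdot c_1(A)<0$ and rank at least $1/|\beta|$. The $\OO_X[1]$ argument at $\beta=0$ relied on $\mathfrak{Im}Z=0$, which fails here.
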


When $L$ is globally generated, the object $W$ in \ref{eq:1} is the shift $F_{L} [1]$ of the kernel bundle $F_L$ of the evaluation map $H^0 (L) \otimes \OO_X \xrightarrow{ev} L$, called the Lazarsfeld-Mukai bundle. This result establishes the stability of the Lazarsfeld-Mukai bundle $F_{L}$ associated to a complete linear series $(L,H^0 (L))$ on a curve $C \in |H|$.
For a non-complete generated linear series $(L,V)$ on $C \in |H|$, we can construct the Lazarsfeld-Mukai bundle associated to the evaluation map $V \otimes \OO_X \xrightarrow{ev} L$, and we have the  the following,

\begin{cor}
Suppose $\beta<0$. If $L$ is generated by $V \in Gr(r+1,H^0 (L))$ then \ref{eq:1} have the form $$V\otimes \OO_X \hookrightarrow L \twoheadrightarrow F_{V,L} [1]$$
with $F_{V,L}[1]$ strictly-semi-stable on the wall and stable on the side of the wall where $L$ is stable.
\end{cor}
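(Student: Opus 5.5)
The plan is to repeat the argument of Lemma \ref{Lem3} with the full space $H^0(L)$ replaced by the generating subspace $V$, and then run the stability check of Lemma \ref{lemabridg}.

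\emph{Step 1: the triangle.} Since $V$ generates $L$ as an $\OO_X$-module (with $L$ pushed forward to $X$), the evaluation map $V\otimes\OO_X\to L$ is a surjection of sheaves. Its kernel $F_{V,L}$ is the kernel of a surjection from a vector bundle onto a sheaf of projective dimension one on a surface, so it is locally free of rank $r+1$. It sits in
\[
0\to F_{V,L}\to V\otimes\OO_X\to L\to 0 .
\]
Rotating this triangle gives $V\otimes\OO_X\to L\to F_{V,L}[1]$ in $D^b(X)$.

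\emph{Step 2: everything lies in $Coh^\beta(X)$.}
\begin{itemize}
\item $L$ is torsion, so $L\in T^\beta$ for every $\beta$.
\item For $\beta<0$ we have $\mu_\beta(\OO_X)=-\beta>0$, so $\OO_X\in T^\beta$.
\item $\mu(F_{V,L})=-H^2/(r+1)$. Its $\mu$-stability can be taken from the standard Lazarsfeld--Mukai argument using $(\ast\ast)$, or for $\mathrm{Pic}$ of rank one. Granting this, $\mu_\beta(F_{V,L})\le 0$ whenever $\beta\ge -H^2/(r+1)$, so $F_{V,L}[1]\in F^\beta[1]$.
\end{itemize}
Near the wall, which must meet the region $-H^2/(r+1)\le\beta<0$, all three objects are therefore in the heart. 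Hence $V\otimes\OO_X\hookrightarrow L\twoheadrightarrow F_{V,L}[1]$ is a short exact sequence in $Coh^\beta(X)$.

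\emph{Step 3: the wall.} On the wall of Lemma \ref{Lem3}, $Z(\OO_X)$ is aligned with $Z(v)$. Additivity gives $Z(F_{V,L}[1])=Z(v)-(r+1)Z(\OO_X)$, which is aligned with both. So the sequence is a destabilising sequence on the wall, and $F_{V,L}[1]$ has Mukai vector $v-(r+1)v(\OO_X)$, exactly of the form required in Lemma \ref{lemabridg}. Because it has the subobject $\OO_X$ of the same phase, $L$ is strictly semistable there. This also makes $F_{V,L}[1]$ strictly semistable on the wall, since its phase coincides with that of the stable object $\OO_X[1]$ appearing in its Jordan--Hölder type filtration.

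\emph{Step 4: stability off the wall (main obstacle).} Semistability of $F_{V,L}[1]$ on the wall should follow from Lemma \ref{Lem3}: as a quotient of the semistable $L$ with the same phase, it is semistable. To get stability on the side of the wall where $L$ is stable, I would check the criterion of Lemma \ref{lemabridg}:
\begin{itemize}
\item $\mathrm{Hom}(F_{V,L}[1],\OO_X)=\mathrm{Ext}^{-1}(F_{V,L},\OO_X)=0$ trivially.
\item $\mathrm{Hom}(\OO_X,F_{V,L}[1])=H^1(F_{V,L})$. Taking cohomology of the sequence in Step 1, this equals $\mathrm{coker}(V\to H^0(L))$ because $H^1(\OO_X)=0$. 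It is nonzero exactly when $V\subsetneq H^0(L)$.
\end{itemize}
This is precisely the non-complete phenomenon of Remark \ref{obs1}, and it obstructs stability on the wall itself. So the argument must instead use openness of stability, via the support property and the local finiteness of walls. On the wall, $\OO_X$ is the only destabilising factor: every Jordan--Hölder factor of $F_{V,L}[1]$ is either $\OO_X[1]$ or a stable object $W'$ with $\mathrm{Hom}(\OO_X,W')=0$. Moving off the wall to the side where $\phi(\OO_X)<\phi(v)$, the only potential subobjects are built from $\OO_X[1]$. Such subobjects would require $\mathrm{Hom}(\OO_X[1],F_{V,L}[1])=H^0(F_{V,L})$, which vanishes since $V\to H^0(L)$ is injective. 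Hence $F_{V,L}[1]$ is stable on that side.

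Making this last Jordan--Hölder bookkeeping rigorous is where I expect the real difficulty to lie.
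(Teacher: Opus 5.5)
\paragraph{Comparison with the paper.} Your Hom computations match the paper's proof. The paper also shows $\operatorname{Hom}(F_{V,L}[1],\OO_X)=0$ and $\dim\operatorname{Hom}(\OO_X,F_{V,L}[1])=h^0(L)-\dim V$, then applies Lemma \ref{lemabridg}. The paper only asserts stability on the side where $L$ is stable. Your argument that $F_{V,L}[1]$ is semistable on the wall is correct and fills a step the paper leaves implicit: it is a quotient of the semistable object $L$ with the same phase. The problem is Step 4, and the justification at the end of Step 3.

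\paragraph{The gap: the special factor is $\OO_X$, not $\OO_X[1]$.} In Step 2 you noted that $\OO_X\in T^\beta$ for $\beta<0$, so the special stable object at the wall is $\OO_X$ itself. By contrast, $\OO_X[1]$ is not in $Coh^\beta(X)$ and has phase $\phi(\OO_X)+1$. It therefore cannot be a Jordan--H\"older factor of $F_{V,L}[1]$, nor share its phase. The vanishing $\operatorname{Hom}(\OO_X[1],F_{V,L}[1])=H^0(F_{V,L})=0$ thus says nothing about the relevant subobjects. In fact $F_{V,L}[1]$ does have $\OO_X$-subobjects. We have $F_{V,L}\subset F_L$ with cokernel $(H^0(L)/V)\otimes\OO_X$, which gives a short exact sequence $0\to\OO_X^{\oplus c}\to F_{V,L}[1]\to F_L[1]\to 0$ in $Coh^\beta(X)$, where $c=\operatorname{codim}_{H^0(L)}V$. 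This is exactly your nonzero $\operatorname{Hom}(\OO_X,F_{V,L}[1])\cong H^0(L)/V$. So the logic ``the only potential destabilizers are built from the special factor, and it does not map in'' fails: the special factor does map in.

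\paragraph{What actually proves stability off the wall.} The missing ingredients are a phase comparison and the other Hom vanishing.
\begin{enumerate}
\item By the sequence above and the stability of $F_L[1]$ on the wall (Lemma \ref{Lem3}), the Jordan--H\"older factors of $F_{V,L}[1]$ on the wall are $c$ copies of $\OO_X$ and one copy of $F_L[1]$.
\item Near the wall, a destabilizing subobject must be a subobject $A$ that is semistable of the same phase on the wall.
\item If $F_L[1]$ is not a factor of $A$, then $A\cong\OO_X^{\oplus a}$, because $\operatorname{Ext}^1(\OO_X,\OO_X)=H^1(\OO_X)=0$. Since $Z(L)=(r+1)Z(\OO_X)+Z(F_{V,L}[1])$, on the side where $L$ is stable we have $\phi(\OO_X)<\phi(L)<\phi(F_{V,L}[1])$. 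Hence such an $A$ does not destabilize.
\item If $F_L[1]$ is a factor of $A$, then $F_{V,L}[1]/A$ has only $\OO_X$ factors, hence is $\OO_X^{\oplus b}$. For $b>0$ this contradicts $\operatorname{Hom}(F_{V,L}[1],\OO_X)=0$.
\end{enumerate}
So the vanishing that drives stability on that side is the one you dismissed as trivial, not $H^0(F_{V,L})=0$. The same correction repairs Step 3: strict semistability on the wall comes from $\OO_X^{\oplus c}\hookrightarrow F_{V,L}[1]$ together with Lemma \ref{lemabridg}, not from a phase coincidence with $\OO_X[1]$.
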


\begin{proof}
First, notice that $\OO_X \otimes H^0(L) \hookrightarrow L \twoheadrightarrow W$. If $L$ is globally generated then the evaluation map $ev$ is surjective as a map in $Coh(X)$ leading to the inclusion $\OO_X \otimes H^0(L) \hookrightarrow L \twoheadrightarrow W$, where $W = F_L [1]$ and $W_1 = F_{V,L} [1]$. Let $\iota : V \rightarrow H^0 (L)$ be the inclusion, from Lemma \ref{lemabridg} $Hom(W,\OO_X) = 0$ since $W_1$ and $W$ are quotient of $L$, which implies $Hom(W,\OO_X) = Hom(W_1 , \OO_X) = 0$ as well. Consider the diagram
\[
\xymatrix{
V \otimes \OO_X \ar[r] & L \ar[r] & W_1 \\
H^0(L) \otimes \OO_X \ar[r] \ar[u]^{\iota \otimes Id} & L \ar[u]^{Id} \ar[r] & W \ar[u]
}
\]
By applying $Hom(\OO_X , \cdot)$ we get
\[
\xymatrix{
Hom(\OO_X, V \otimes \OO_X) \ar[d]^{Hom(\OO_X, \iota)} & Hom( \OO_X , L) \ar[l] \ar[d]^{Hom(\OO_X,Id)} & Hom(\OO_X , W_1) \ar[d] \ar[l] \\
Hom(\OO_X,H^0(L) \otimes \OO_X) & Hom(\OO_X,L) \ar[l] & Hom(\OO_X,W) \ar[l]
}
\]
Let us denote the dimension of $dim (Hom(\cdot))$ by $hom(\cdot)$. By Theorem \ref{Lem3}, we have $Hom(\OO_X, W) = 0$ and $hom(\OO_X , H^0 (L) \otimes \OO_X) = h^0(L) = hom(\OO_X , L)$. Therefore, \[
hom(O_X,W_1) = h^0(L)-dim(V) = codim_{H^0(L)} (V).
\]
By Lemma \ref{lemabridg}, it follows that $W_1$ is strictly semistable and it is stable at the same side where $L$ is stable.
\end{proof}

Now that we have established the basic results for studying moduli spaces of stable objects with respect to a given stability condition, we aim to take advantage of these tools to relate the stability of vector bundles to the stability of their restrictions to divisors. By employing wall-crossing techniques, we are able to construct precise connections between the (semi)stability of a vector bundle on a surface $X$ and the (semi)stability of its restriction to a curve $C$ lying on $X$.
This approach  allows us to relate the stability of linear series on curves to the stability of the associated syzygy bundles, which is the main objective of this thesis.

\subsection[Cohomological conditions for the stability of MVL and restriction theorem]{Cohomological conditions for the stability of \texorpdfstring{$\mvl$}{MVL} and restriction theorem}

In this section, we explore the relation among the stability of vector bundles on a K3 surface $X$ and the stability of their restrictions to curves $C$ inside $X$. Specifically, we focus on Lazarsfeld-Mukai bundles $F_{V,L}$ associated to a generated linear series $(L,V)$ on $C$. By leveraging the results on stability conditions for sheaves on K3 surfaces developed in the previous sections, we establish conditions under which the restriction of $F_{V,L}$ to $C$ remains stable.

\vspace{2mm}
We recall results of stability for restrictions from the literature, such as those appearing in \cite{feyzbakhsh2022effective}. These results provide sufficient conditions to ensure that the restriction of a stable bundle remains stable when restricted to a curve. By adapting and extending these conditions to the specific setting of Lazarsfeld-Mukai bundles, we are able to relate their stability to the stability of the associated syzygy bundles $\mvl$ on the curve $C$. The insights gained from this analysis are crucial to state the main results in this thesis, which aim to connect the stability of linear series to the stability of syzygy bundles. By understanding the interplay between the geometry of K3 surfaces and the stability of restrictions, we aim to use the stability of Lazarsfeld-Mukai bundles to draw conclusions about the stability of syzygy bundles.

\vspace{2mm}
Let $X$ be a smooth complex projective variety of dimension $n \geq 2$ with an ample divisor $H$. For a $\mu$-stable coherent sheaf $E$ of positive rank on $X$, we have $$\Tilde{\Delta} (E) = \left( \frac{ch_1(E).H^{n-1}}{ch_0(E) H^n} \right)^2 - 2 \frac{ch_2 (E).H^{n-2}}{ch_0 (E) H^n}$$
\begin{itemize}
    \item $\mu^{max} (E) = max \{ \mu(F) : F \text{ is a subsheaf of } E \text{ with } \mu(F) < \mu(E) \}$
    \item $\mu^{min} (E) = min \{ \mu(F') : F' \text{ is a proper quoatient sheaf of } E \}$
\end{itemize}
and $\delta (E) = min\{ \mu^{min}(E) - \mu (E) , \mu(E) - \mu^{max} (E) \}.$

\begin{teo}[\text{\cite[Theorem 1.1]{feyzbakhsh2022effective}}]
    Let $E$ be a $\mu$-stable reflexive sheaf on $X$ of rank $r>0$. The restricted sheaf $E|_D$ for any irreducible divisor $D \in |mH|$ is $\mu$-semistable on $D$ if $$ m \geq \frac{r+2}{\sqrt{r+1}} \sqrt{\Tilde{\Delta} (E)} \text{ and } \frac{m}{2} + \sqrt{\frac{m^2}{4}- \Tilde{\Delta} (E)} \geq \frac{\Tilde{\Delta} (E)}{\delta(E)}.$$ Moreover, $E|_{D}$ is $\mu$-stable if the inequalities are both strict.
\end{teo}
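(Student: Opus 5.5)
The plan is to prove this restriction statement with tilt (Bridgeland-type) stability conditions $\sigma_{\beta,\alpha}=(Z_{\beta,\alpha},Coh^\beta(X))$ in the $(\beta,\alpha)$-plane, in the spirit of Section 3. Write $\mu=\mu(E)$, with slopes normalised by $H^n$. The key observation concerns the restriction sequence
\[0\to E(-D)\to E\to E|_D\to 0 .\]
For every $\beta\in[\mu-m,\mu)$, both $E$ and $E(-D)[1]$ lie in $Coh^\beta(X)$, because $E$ is $\mu$-stable with $\mu_\beta(E)>0\geq \mu_\beta(E(-D))$. Rotating the sequence then shows that
\[E\to E|_D\to E(-D)[1]\]
is a short exact sequence in the heart. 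I would work on the symmetric vertical line $\beta_0=\mu-\tfrac m2$. There a direct computation with the central charge (the ch-twist by $-mH$ is symmetric about $\beta_0$) shows that the tilt slopes of $E$ and $E(-D)[1]$ coincide for one specific value $\alpha_0$. That value is given in terms of $m$ and $\tilde\Delta(E)$ by $\alpha_0^2=\tfrac{m^2}{4}-\tilde\Delta(E)$, up to normalisation, so it is real exactly when $m^2/4\geq\tilde\Delta(E)$.

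Step two is to show that $E$ and $E(-D)[1]$ are both $\sigma_{\beta_0,\alpha_0}$-semistable (stable in the strict case). For $\alpha\gg0$ a $\mu$-stable reflexive sheaf is tilt-stable (large volume limit), and likewise for the shifted twist. So it suffices to show that no actual wall for $v(E)$ or for $v(E(-D)[1])$ passes above $(\beta_0,\alpha_0)$. Walls are nested semicircles on each side of the vertical wall $\beta=\mu$, so this reduces to bounding the radius of the largest wall. A destabilising subobject $F\hookrightarrow E$ along a wall has $H^0$-cohomology $F$ a subsheaf with $\mu(F)\le\mu^{max}(E)$, or its quotient has $\mu\ge\mu^{min}(E)$. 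Combining this with the Bogomolov inequality $\tilde\Delta\geq 0$ for the tilt-semistable factors gives a wall radius bounded in terms of $\tilde\Delta(E)$ and $\delta(E)$. Asking that this bound lie below the point at distance $m/2$ from $\mu$ and height $\alpha_0$ should give precisely the second hypothesis
\[\frac m2+\sqrt{\frac{m^2}{4}-\tilde\Delta(E)}\geq\frac{\tilde\Delta(E)}{\delta(E)}.\]
The first hypothesis $m\geq\frac{r+2}{\sqrt{r+1}}\sqrt{\tilde\Delta(E)}$ I expect to handle the walls produced by subobjects of rank close to $r$; there the $\delta$-estimate is too weak and one uses instead integrality of the rank together with the discriminant estimate. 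This wall-bounding step is the main obstacle. I would first try to prove a lemma stating that any wall for a $\mu$-stable sheaf of rank $r$ has radius at most $\max\{\tilde\Delta/\delta,\ \text{(rank-dependent term)}\}$, and then match the two hypotheses to the two terms.

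Step three: once both $E$ and $E(-D)[1]$ are $\sigma_{\beta_0,\alpha_0}$-semistable of the same phase, $E|_D$ is an extension of semistable objects of that phase, hence semistable of that phase. Finally I would translate back to slope stability on $D$. For a torsion sheaf $i_*G$ supported on $D\in|mH|$, the tilt slope
\[\nu_{\beta,\alpha}(i_*G)=\frac{ch_2-\beta H\cdot ch_1}{H\cdot ch_1}\]
is an increasing affine function of $\mu_D(G)$, since $ch_1(i_*G)=rk(G)\,mH$ and $ch_2$ is $\deg G$ plus a constant times the rank. If $E|_D\twoheadrightarrow G$ is a quotient sheaf on $D$, then $i_*G\in T^{\beta_0}$ and the surjection in $Coh(X)$ with torsion kernel is also a quotient in $Coh^{\beta_0}(X)$. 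Semistability then gives $\nu(i_*G)\geq\nu(E|_D)$, hence $\mu_D(G)\geq\mu_D(E|_D)$, which is $\mu$-semistability. In the strict case $E$ and $E(-D)[1]$ are stable and non-isomorphic. A quotient $G$ of the same slope would then have to be a Jordan–H\"older factor, namely $E(-D)[1]$, which is not a sheaf supported on $D$; so equality cannot occur, and $E|_D$ is $\mu$-stable.
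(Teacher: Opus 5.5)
The paper gives no proof of this statement; it quotes Feyzbakhsh's Theorem 1.1. Your overall strategy is the one used in that source: the sequence $0\to E\to i_*E|_D\to E(-D)[1]\to 0$ in $Coh^\beta(X)$ for $\mu-m\le\beta<\mu$, the numerical wall $W$ with centre $\mu-\frac m2$ and radius $\sqrt{m^2/4-\tilde\Delta(E)}$, semistability of both ends at its top point, and the translation of tilt slopes of torsion sheaves into slopes on $D$. Two small points:
\begin{itemize}
\item For $\dim X\ge 3$ you must use tilt stability $\nu_{\beta,\alpha}$ with the Bogomolov inequality for $\nu$-semistable objects, not the K3 central charge of Section 3.
\item Reflexivity is exactly what makes $E(-D)[1]$ tilt-stable for $\alpha\gg0$, since $\mathrm{Ext}^1(T,E(-D))=0$ for sheaves $T$ supported in codimension $\ge 2$. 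Saying ``likewise'' hides this.
\end{itemize}

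The genuine gap is step two. It carries the whole content of the theorem, you leave it as an expectation, and the case split you propose is wrong. A subobject $F\hookrightarrow E$ in $Coh^\beta(X)$ is a sheaf, but the sheaf map $F\to E$ need not be injective: its kernel is $H^{-1}(G)\in F^\beta$, where $G$ is the quotient in the heart. So $\mathrm{ch}_0(F)$ can exceed $r$, and then $\mu^{max}(E)$ tells you nothing about $\mu(F)$. The first hypothesis controls precisely these walls. It is not about subobjects of ``rank close to $r$'' where the $\delta$-estimate is merely too weak.

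Here is how to fill the gap.
\begin{itemize}
\item \emph{Setup.} Let a wall for $E$ to the left of $\mu$ have endpoints $x_1<x_2<\mu$, and put $t_i=\mu-x_i$. The wall equation gives $t_1t_2=\tilde\Delta$. By nestedness, the wall lies inside $W$ iff $t_1\le t_1^W:=\frac m2+\sqrt{\frac{m^2}{4}-\tilde\Delta}$, and $t_1^W+\tilde\Delta/t_1^W=m$.
\item \emph{The right quantity.} The bound $\tilde\Delta/\delta$ must control $t_1$, the distance from $\mu$ to the far endpoint, not the radius. A radius bound of $\tilde\Delta/\delta$ does not reproduce the stated inequality.
\item \emph{Common input.} Take $F$ semistable along the wall. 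From $F\in T^\beta$ and Bogomolov on the wall line, $\mu(F)\ge x_2$.
\item \emph{Case $\mathrm{ch}_0(F)\le r$.} Since $\mu(H^{-1}(G))\le\beta<\mu(F)$, you get $\mu(F)\le\mu(\operatorname{im}(F\to E))\le\mu-\delta$. Hence $t_2\ge\delta$, so $t_1\le\tilde\Delta/\delta$. Requiring $\tilde\Delta/\delta\le t_1^W$ is literally the second hypothesis.
\item \emph{Case $R=\mathrm{ch}_0(F)\ge r+1$.} Here $\mathrm{ch}_0(G)<0$, so $\mathrm{ch}_1^\beta(G)\ge 0$ and Bogomolov give $\mu(G)\le x_1$. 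Then $r\mu=R\mu(F)-(R-r)\mu(G)$ yields $(R-r)t_1\le Rt_2$, hence $t_1^2\le\frac{R}{R-r}\tilde\Delta\le (r+1)\tilde\Delta$. This is equivalent to the familiar bound $\rho^2\le\frac{r^2}{4(r+1)}\tilde\Delta$. Finally, $\sqrt{(r+1)\tilde\Delta}\le t_1^W$ is equivalent to $m\ge\frac{r+2}{\sqrt{r+1}}\sqrt{\tilde\Delta}$, which is the first hypothesis.
\item \emph{The other end.} The mirror argument for $E(-D)[1]$, to the right of $\mu-m$, uses quotients of $E$ (the $\mu^{min}$ half of $\delta$) in place of subsheaves.
\item \emph{Stability.} Strict inequalities exclude $W$ itself as an actual wall, so both ends are stable at $(\beta_0,\alpha_0)$. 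Your step three then goes through as written.
\end{itemize}
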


When $r>1$, then $\delta(E) \geq \frac{1}{H^n r(r-1)}$ and we can restrict the conditions as:

\begin{prop}[\text{\cite[Proposition 4.6]{feyzbakhsh2022effective}}]
    Let $E$ be a $\mu$-stable reflexive sheaf as above with rank $r>1$. The restricted sheaf $E|_D$ for any irreducible divisor $D \in |mH|$ is $\mu$-(semi)stable on $D$ if 
    \begin{equation} \label{restriccond}
    m> (\geq) r(r-1) \Tilde{\Delta} (E) + \frac{1}{r(r-1)}.
    \end{equation}
\end{prop}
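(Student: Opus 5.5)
The plan is to derive this directly from \cite[Theorem 1.1]{feyzbakhsh2022effective}, stated just above. For rank $r>1$ we use the lower bound $\delta(E)\ge \frac{1}{r(r-1)}$, in the normalisation of slopes used in $\Tilde{\Delta}$. With that bound, the two inequalities of Theorem 1.1 become purely numerical conditions in $m$ and $D:=\Tilde{\Delta}(E)$. Put $a:=r(r-1)\ge 2$ and $m_0:=aD+\frac1a$, and assume $m>m_0$ (resp. $m\ge m_0$). By the Bogomolov inequality for $\mu$-stable sheaves, $D\ge 0$.

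First I will treat the second condition, which is the one that produces the bound. The map $f(m)=\frac m2+\sqrt{\frac{m^2}{4}-D}$ is defined once $m\ge 2\sqrt D$, and it is strictly increasing there. By AM--GM, $m_0=aD+\frac1a\ge 2\sqrt D$, so $f$ is defined on $[m_0,\infty)$. Next I compute
\[
\frac{m_0^2}{4}-D=\frac{\left(aD-\frac1a\right)^2}{4},
\]
which gives $f(m_0)=\max\{aD,\tfrac1a\}\ge aD$. Since $\delta(E)\ge\frac1a$, we have $\frac{D}{\delta(E)}\le aD$. Hence $f(m)\ge \frac{D}{\delta(E)}$ for $m\ge m_0$, with strict inequality for $m>m_0$ by strict monotonicity.

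Then I check the first condition, $m\ge \frac{r+2}{\sqrt{r+1}}\sqrt D$. Put $c=\frac{r+2}{\sqrt{r+1}}$. The quadratic $aD-c\sqrt D+\frac1a$ in $\sqrt D$ has roots $\sqrt D=\frac{1}{a\sqrt{r+1}}$ and $\sqrt D=\frac{\sqrt{r+1}}{a}$. Outside the interval between these roots, $m_0\ge c\sqrt D$ already, and $m>m_0$ gives strict inequality. Inside that window one has $c\sqrt D<\frac{r+2}{a}$. For $r\ge3$ this is below $1$, so it is harmless as soon as we use that $m\ge1$ is an integer (we have $D\in|mH|$ with $m$ a positive integer).

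The main obstacle is the case $r=2$ with $D$ in this small window, roughly $\frac{3}{16}<D<\frac14$ when $m=1$. There I would use integrality: on a surface, $r^2H^4\,\Tilde{\Delta}(E)$ lies in $\frac{1}{2}\mathbb Z$ up to the Bogomolov discriminant, so $\Tilde\Delta(E)$ only takes discrete values. I would check that none of these values falls into the bad range, or else that $\Tilde\Delta(E)=0$, in which case both conditions hold trivially. If this discreteness argument fails, I would instead sharpen the estimate for $\delta(E)$ in rank $2$, where the only destabilising candidates are line subsheaves. Combining the two verified conditions with Theorem 1.1 then gives $\mu$-semistability of $E|_D$ for $m\ge m_0$, and $\mu$-stability for $m>m_0$.
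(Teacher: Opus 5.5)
\textbf{Gap: the bound on $\delta(E)$ is off by a factor $H^n$.} Your route is the paper's own: insert an integrality lower bound for $\delta(E)$ into the restriction theorem stated just above (Theorem~1.1 of \cite{feyzbakhsh2022effective}). Your algebra is correct, namely $f(m_0)=\max\{a\tilde\Delta,1/a\}$ and the roots $\frac{1}{a\sqrt{r+1}}$, $\frac{\sqrt{r+1}}{a}$ for the first condition (which the paper never checks). The problem is the input $\delta(E)\ge\frac{1}{r(r-1)}$.

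In the normalisation of $\tilde\Delta$, which is the one in which Theorem~1.1 holds, slopes are $\frac{ch_1\cdot H^{n-1}}{ch_0\,H^n}$. Slope differences with a subsheaf or quotient of rank $r'\le r-1$ therefore lie only in $\frac{1}{H^n r r'}\mathbb{Z}$. Hence only $\delta(E)\ge\frac{1}{H^n r(r-1)}$ holds, exactly as the paper writes one line before the statement. With the correct $a=H^n r(r-1)$, your argument proves (semi)stability for $m>(\ge)\,H^n r(r-1)\tilde\Delta(E)+\frac{1}{H^n r(r-1)}$. Compared with the stated threshold,
\[
\Big(H^n r(r-1)\tilde\Delta+\frac{1}{H^n r(r-1)}\Big)-\Big(r(r-1)\tilde\Delta+\frac{1}{r(r-1)}\Big)=(H^n-1)\Big(r(r-1)\tilde\Delta-\frac{1}{H^n r(r-1)}\Big),
\]
which is positive as soon as $H^n>1$ and $\tilde\Delta(E)>\frac{1}{H^n r^2(r-1)^2}$. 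So the displayed inequality follows by this route only when $H^n=1$. For $H^n>1$, nothing covers the values of $m$ between the two thresholds; the paper's one-line justification drops the same factor. This is not cosmetic: for $F_{V,L}$ on a K3 with $H^2=2g-2$ and $d=g-1$, the corrected threshold exceeds $\frac{2r(g-1)}{r+1}>1$.

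\textbf{The first condition and the case $r=2$.} Once the constant is corrected, your ``main obstacle'' largely disappears. Inside the bad window one has $\frac{r+2}{\sqrt{r+1}}\sqrt{\tilde\Delta}<\frac{r+2}{a}=\frac{r+2}{H^n r(r-1)}$. This is $<1$ for $r\ge 3$, and $\le 1$ for $r=2$ with $H^n\ge 2$, so $m\ge 1$ gives the first condition strictly.

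Only $r=2$, $H^n=1$, $m=1$ remains, and there your discreteness idea is incomplete. Since $r^2(H^n)^2\tilde\Delta=(ch_1\cdot H^{n-1})^2-2rH^n\,(ch_2\cdot H^{n-2})\in\mathbb{Z}$, one has $\tilde\Delta\in\frac14\mathbb{Z}$. This settles the stable case, because $m>m_0$ forces $\tilde\Delta=0$. In the semistable case, however, $\tilde\Delta=\frac14$ gives $m_0=1=m$, while $\frac{4}{\sqrt3}\sqrt{\tilde\Delta}=\frac{2}{\sqrt3}>1$. So the first hypothesis of Theorem~1.1 genuinely fails at this boundary value, and your proposed check that no admissible value lies in the bad range cannot succeed. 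This case needs a separate argument; one checks that it forces $\Delta(E)\cdot H^{n-2}=0$.
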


\vspace{1mm}

For a smooth $K3$ surface $X$, a smooth curve $C\subset X$ and a generated linear series $(L,V)\in G^r_d(C)$, the Lazarsfeld-Mukai bundle is defined via the following elementary modification on $X$
$$0\to F_{C,V,L}\to V\otimes\mathcal O_S\to L\to 0$$
\vspace{1mm}
For short we write $F_{V,L}$ when the context is understood.
\vspace{1mm}
Now, let $C \in |H|$ be a smooth curve with a $(L,V)\in G^r_d(C)$ a generated linear series on $C$ such that $d\leq g-1$, and consider the corresponding Lazarsfeld-Mukai bundle $F_{V,L}$ on $X$. Let's compute the condition \ref{restriccond} for this case: First observe that $ch(F_{V,L})=(r+1,H,g-1-d)$, $n=2$ and 
\begin{align*}
\Tilde{\Delta}(F_{V,L}) = \left(\frac{H.H^1}{r.H^2} \right)^2 - 2\frac{(g-1-d).H^0}{(r+1)H^2} &= \frac{1}{(r+1)^2}-2\frac{g-1-d}{(r+1)(2g-2)} \\  &= \frac{1}{r+1}\left( \frac{d}{g-1}-\frac{r}{r+1} \right) ,\end{align*}
by substituting the right side of \ref{restriccond} for $m=1$, 
\begin{align*}
    r(r+1) \cdot \frac{1}{r+1} \left( \frac{d}{g-1} - \frac{r}{r+1} \right) + \frac{r}{r+1} &= \frac{rd}{g-1} - \frac{r^2}{r+1} + \frac{1}{r(r+1)} \\ &= \frac{rd}{g-1} + \frac{1-r^3}{r(r+1)}
\end{align*}

Since $d \leq g-1$ then $\frac{d}{g-1}r \leq r$, then for $r\geq 2$,
\begin{align*}
    \frac{rd}{g-1} + \frac{1-r^3}{r(r+1)} &\leq r+\frac{1-r^3}{r(r+1)} \\ 
    & = \frac{r^3+r^2+1-r^3}{r(r+1)} \\
    & = \frac{r^2+1}{r^2+r} \\
    & < 1.
\end{align*}
We conclude that the restriction of the bundle $F_{V,L}$ to the curve $C$, denoted by $F_{V,L}|_C$, is stable. 

\begin{corollary}
    In the above context $F_{V,L}|_C$ is stable.
\end{corollary}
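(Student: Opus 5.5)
The corollary follows once the restriction criterion of \cite[Proposition 4.6]{feyzbakhsh2022effective}, i.e.\ inequality \eqref{restriccond}, is applied to $E=F_{V,L}$ with $D=C\in|H|$, so $m=1$. The plan has three steps: verify the hypotheses of that proposition for $F_{V,L}$, record the numerical computation carried out just above, and then conclude.

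\emph{Step 1: hypotheses on $F_{V,L}$.} First, $F_{V,L}$ is locally free, hence reflexive. Indeed, it is the kernel of a surjection from the vector bundle $V\otimes\OO_X$ onto $L$, and $L$ is a line bundle on the Cartier divisor $C$, so $L$ has homological dimension one on $X$. Second, its rank is $r+1\geq 3>1$, since $r\geq 2$. Third, $C\in|H|$ is irreducible, being smooth and connected. The remaining and main input is that $F_{V,L}$ is $\mu$-stable. I plan to obtain this from the wall-crossing picture of the previous subsection. By the preceding Corollary, $F_{V,L}[1]$ is $\sigma_{\beta,\alpha}$-stable on the side of the wall where $L$ is stable. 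I would then move the stability condition within the chamber for the class $v(F_{V,L}[1])$ towards the large-volume limit $\alpha\gg0$, $\beta<\mu_H(F_{V,L})$, where $\sigma_{\beta,\alpha}$-stability of a shifted object $E[1]$ of positive rank forces $E$ to be $\mu$-semistable. Strict $\mu$-stability would come from the divisibility hypothesis $(\ast\ast)$. A destabilising subsheaf $G$ with $\mu(G)=\mu(F_{V,L})=H^2/(r+1)$ would need $H.c_1(G)=\operatorname{rk}(G)\,H^2/(r+1)$. By $(\ast\ast)$, $H.c_1(G)$ is a multiple of $H^2$, and this is impossible for $0<\operatorname{rk}(G)<r+1$.

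\emph{Step 2: the numerics.} Next, take $\operatorname{ch}(F_{V,L})=(r+1,H,g-1-d)$ and $H^2=2g-2$. With these, $\tilde\Delta(F_{V,L})=\frac{1}{r+1}\bigl(\frac{d}{g-1}-\frac{r}{r+1}\bigr)$, which is the computation already written above the statement. The right-hand side of \eqref{restriccond}, with $r+1$ in place of the rank, is bounded using $d\leq g-1$ by $\frac{r^2+1}{r^2+r}$, and this is $<1=m$ for $r\geq2$. So the strict inequality holds, and the proposition gives that $F_{V,L}|_C$ is $\mu$-stable on $C$.

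\emph{Main obstacle.} I expect the hard step to be the $\mu$-stability of $F_{V,L}$ on $X$ in Step 1. The Bridgeland argument only yields stability on one side of a specific wall. One must check that no further wall for the class $v(F_{V,L}[1])$ separates that region from the Gieseker/large-volume chamber. If this is not clear, my fallback is a direct argument. Any saturated subsheaf $G\subset F_{V,L}$ with $\mu(G)\geq\mu(F_{V,L})$ would need, by $(\ast\ast)$, $H.c_1(G)\geq H^2$, hence $\mu(G)\geq H^2/\operatorname{rk}(G)>0$. On the other hand, $F_{V,L}\subset V\otimes\OO_X$ and $\OO_X$ is $\mu$-stable of slope $0$, so every subsheaf of $F_{V,L}$ has $H.c_1\leq 0$, i.e.\ slope $\leq 0$. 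These two bounds contradict each other, which gives stability directly.
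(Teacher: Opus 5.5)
Your Step 2 is exactly the paper's proof. That proof consists only of computing $\tilde\Delta(F_{V,L})$ and bounding the right-hand side of \eqref{restriccond} by $\frac{r^2+1}{r^2+r}<1$; the paper takes the $\mu$-stability of $F_{V,L}$ for granted from the preceding wall-crossing discussion. The gap is in the justification you add for that $\mu$-stability.

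\emph{The sign of $c_1$.} The triple $(r+1,H,g-1-d)$ is really the Chern character of the dual $F_{V,L}^\vee$. This is harmless for $\tilde\Delta$, which only sees $(H.\operatorname{ch}_1)^2$. But since $F_{V,L}\subset V\otimes\OO_X$ with cokernel $L$ supported on $C$, one has $c_1(F_{V,L})=-H$ and $\mu(F_{V,L})=-H^2/(r+1)<0$.

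\emph{Why the fallback fails.} It uses $\mu(F_{V,L})=H^2/(r+1)>0$ together with ``every subsheaf of $F_{V,L}$ has slope $\le 0$''. Applied to $F_{V,L}$ itself, these two statements already contradict each other, so the contradiction you reach proves nothing. With the correct sign, $(\ast\ast)$ and $\mu(G)\ge -H^2/(r+1)>-H^2/\operatorname{rk}(G)$ give $H.c_1(G)\ge 0$, and the trivial bundle gives $H.c_1(G)\le 0$. You are left exactly with subsheaves of $H$-degree $0$, which your two bounds do not exclude; a copy of $\OO_X$ inside $F_{V,L}$ would be one.

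\emph{The Bridgeland route.} It has the same sign slip: $F_{V,L}[1]\in\mathrm{Coh}^\beta(X)$ needs $\mu^+_\beta(F_{V,L})\le 0$, i.e.\ $\beta$ at least the maximal slope of $F_{V,L}$, not $\beta<\mu(F_{V,L})$. The absence of intermediate walls is, as you say, left open.

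\emph{The missing input} is that $V\to H^0(C,L)$ is injective. Let $G\subset F_{V,L}$ be saturated with $0<s=\operatorname{rk}G<r+1$ and $H.c_1(G)=0$, and let $G'$ be its saturation in $V\otimes\OO_X$.
\begin{enumerate}
\item Then $H.c_1(G')=0$, so $Q=(V\otimes\OO_X)/G'$ is torsion free, globally generated, of rank $q=r+1-s$, with $H.c_1(Q)=0$.
\item Here $q$ general sections give $\OO_X^{q}\hookrightarrow Q$ with cokernel of $H$-degree $0$, hence $0$-dimensional. Then $Q\subset Q^{\vee\vee}\cong\OO_X^{q}$ forces $Q\cong\OO_X^{q}$.
\item Since $\operatorname{Hom}(\OO_X,\OO_X)=\mathbb{C}$, the surjection $V\otimes\OO_X\to\OO_X^{q}$ comes from a linear surjection $V\to\mathbb{C}^q$. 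So $G'=V'\otimes\OO_X$ for an $s$-dimensional subspace $V'\subset V$.
\item No nonzero element of $V'$ vanishes on $C$, so the evaluation $V'\otimes\OO_X\to L$ has nonzero image supported on $C$. Hence $G'\cap F_{V,L}$ has rank $s$ and $c_1=-H$.
\item As $G$ lies in $G'\cap F_{V,L}$ with the same rank, $H.c_1(G)\le -H^2$, a contradiction.
\end{enumerate}
This gives the $\mu$-stability of $F_{V,L}$ directly, and Step 2 then applies.

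\emph{A second use of $(\ast\ast)$.} The $H^2$-free form of \eqref{restriccond} is itself a consequence of $(\ast\ast)$. It puts the normalized slopes $\frac{H.c_1}{H^2\operatorname{rk}}$ in $\frac{1}{\operatorname{rk}}\mathbb{Z}$, and so gives $\delta(F_{V,L})\ge\frac{1}{r(r+1)}$. With only the general bound $\delta\ge\frac{1}{H^2r(r+1)}$, the main term would be $H^2r(r+1)\tilde\Delta(F_{V,L})\ge\frac{2r}{r+1}>1$ whenever $\tilde\Delta(F_{V,L})>0$, and $m=1$ would not suffice. You should state this when you check the hypotheses.
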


The following short exact sequence relates $F_{V,L}|_C$ with $M_{V,L}$ (see \cite{aprodu2010koszul}). We denote $K^{-1}_C L$ for the line bundle $L \otimes K^{-1}_C$:
\begin{equation*}
    0 \rightarrow K^{-1}_C L \rightarrow F_{V,L}|_C \rightarrow \mvl \rightarrow 0
\end{equation*}

\begin{prop}
    We have the following correspondence between the subbundles of $M_{V,L}$ and the subbundles of $F_{V,L}|_C$:
\[ 
 \left\{ S \subset \mvl \right\} \Longleftrightarrow  \left\{ S' \subset F_{V,L}|_C : K^{-1}_C L \subset S' \right\} 
\]
\end{prop}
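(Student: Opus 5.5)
The correspondence is the usual lattice correspondence for the surjection $\pi\colon F_{V,L}|_C\to \mvl$ in the exact sequence
\[
0 \rightarrow K^{-1}_C L \xrightarrow{j} F_{V,L}|_C \xrightarrow{\pi} \mvl \rightarrow 0 .
\]
I will write down the two maps explicitly, check that each lands in subbundles, and check that they are mutually inverse.

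From left to right, given a subbundle $S\subset \mvl$, set $S':=\pi^{-1}(S)$, the preimage subsheaf of $F_{V,L}|_C$. It contains $\ker\pi = j(K^{-1}_C L)$. Restricting $\pi$ gives an exact sequence $0\to K^{-1}_C L\to S'\to S\to 0$. The quotient satisfies
\[
F_{V,L}|_C/S' \cong \mvl/S ,
\]
which is torsion free, hence locally free since $C$ is a smooth curve. Therefore $S'$ is a subbundle, of rank $\operatorname{rank}(S)+1$ and degree $\deg(S)+\deg(L)-(2g-2)$.

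From right to left, given a subbundle $S'\subset F_{V,L}|_C$ containing $j(K^{-1}_C L)$, set $S:=\pi(S')\cong S'/K^{-1}_C L$. The third isomorphism theorem gives
\[
\mvl/S \cong (F_{V,L}|_C/K^{-1}_C L)\big/(S'/K^{-1}_C L) \cong F_{V,L}|_C/S' ,
\]
which is again locally free. So $S$ is a subbundle of $\mvl$.

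The two constructions are inverse to each other. First, $\pi(\pi^{-1}(S))=S$ because $\pi$ is surjective. Second, $\pi^{-1}(\pi(S'))=S'+\ker\pi=S'$ because $S'$ contains $\ker\pi$. Both maps preserve inclusions, so the correspondence is a bijection of posets.

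The only point needing care is what "subbundle" means. Everything above works for saturated subsheaves. If arbitrary subsheaves are intended, the same argument gives a bijection between all subsheaves of $\mvl$ and all subsheaves of $F_{V,L}|_C$ containing $K^{-1}_C L$, and the quotient isomorphism shows saturation is preserved in both directions. I expect no real obstacle; the main thing to get right is to write the degree and rank bookkeeping above explicitly, since the slope comparison in the proof of Proposition \ref{intro:equivk3surf} will use it.
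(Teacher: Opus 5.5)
Your proof is correct and follows essentially the paper's route: the paper's pullback of $F_{V,L}|_C \to M_{V,L} \leftarrow S$ is exactly your preimage $\pi^{-1}(S)$, the inverse map is the quotient $S'/K_C^{-1}L$, and the paper gets the identification $F_{V,L}|_C/S' \cong M_{V,L}/S$ from the snake lemma in the diagram right after the proof, where you use the third isomorphism theorem. You are somewhat more complete than the paper: you check that the two constructions are mutually inverse and preserve saturation, whereas the paper only asserts injectivity of the forward map via uniqueness of the pullback.
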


\begin{proof}
    \begin{itemize}
        \item[($\Rightarrow)$] Taking $S \subset \mvl$, consider the following diagram:
        \[
        \xymatrix{
         & K_{C}^{-1} L \ar[d]^=  &  & S \ar[d]^\iota  & \\
        0 \ar[r] & K_{C}^{-1} L \ar[r] & F_{V,L}|_{C} \ar[r]  & \mvl \ar[r]  & 0
        }
        \]
        Where $\iota$ is the inclusion, we can complete the diagram by using the pullback of the diagram $F_{V,L}|_C \rightarrow \mvl \xleftarrow{\iota} S$ in the abelian category $Coh(C)$ (see \cite[Lemma 7.29]{rotman2009introduction} for the case of modules, the proof is analogue for abelian categories), called $S'$ and we obtain:
        \[
        \xymatrix{
        0 \ar[r] & K_C^{-1} L \ar[d]^=  \ar[r] & S' \ar[d] \ar[r] & S \ar[d]^\iota  \ar[r] & 0 \\
        0 \ar[r] & K_C^{-1} L \ar[r] & F_{V,L}|_{C} \ar[r]  & \mvl \ar[r]  & 0
        }
        \]
        by uniqueness of the pullback, the correspondence $S \mapsto (K_C^{-1} \hookrightarrow S')$ is injective.
        \item[($\Leftarrow$)] Taking $S'\subset F_{V,L}|_{C}$ with $K_{C}^{-1} L \hookrightarrow S'$, consider $S$ as the quotient $S'/K_{C}^{-1}$, since $S' \hookrightarrow F_{V,L}|_{C}$ then $S \hookrightarrow \mvl$ and the following diagram is commutative
        \[
        \xymatrix{
        0 \ar[r] & K_C^{-1} L \ar[d]^= \ar[r] & S' \ar[d] \ar[r] & S \ar[d] \ar[r] & 0 \\
        0 \ar[r] & K_C^{-1} L \ar[r] & F_{V,L}|_C \ar[r] & \mvl \ar[r]  & 0 
        }
        \]
        
    \end{itemize}
\end{proof}

Consider the following diagram under the correspondence above, with the snake's lemma we have that the last row is an isomorphism.
\begin{equation}
    \xymatrix{
    & &  0 \ar[d] & 0 \ar[d] & \\
    0 \ar[r] & K_C^{-1} L \ar[d]^= \ar[r] & S' \ar[d] \ar[r] & S \ar[d] \ar[r] & 0 \\
    0 \ar[r] & K_C^{-1} L \ar[r] & F_{V,L}|_C \ar[r] \ar[d] & \mvl \ar[r] \ar[d] & 0 \\
    & & F_{V,L}|_C / S' \ar[r]^= \ar[d] & \mvl/S \ar[d] & \\
    & & 0 & 0
    }
    \label{eqivres}
\end{equation}

From now on we denote by $Q$ the quotient $\mvl / S$ or equivalently $ F_{V,L}|_C / S'$. Given the relation between the bundles $\mvl$ and $F_{V,L}|_C$, we aim to take advantage of this connection and the fact that the restriction of $F_{V,L}|_C$ is stable to construct conditions under which this stability implies the stability of $\mvl$.
We recall a result that connects the stability of a stable vector bundle $E$ with the global sections of the elements that appear in a short exact sequence involving $E$.

\begin{lema}[\text{\cite[Lemma 1.1]{russo1997conjecture}}] \label{lemaTeix}
    Let $E$ be a slope-stable vector bundle. Assume that we have an exact sequence \[
    0 \rightarrow E' \rightarrow E \rightarrow E'' \rightarrow 0.
    \]  
    Then $h^0 ( (E'')^\vee \otimes E')=0$.
\end{lema}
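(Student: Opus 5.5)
The plan is to turn a nonzero section of $(E'')^\vee \otimes E'$ into a nonzero endomorphism of $E$ that is not an isomorphism, and then to contradict the fact that a slope-stable bundle is simple. Throughout I assume, as is implicit in the statement, that the sequence is nontrivial, i.e.\ $E' \neq 0$ and $E'' \neq 0$. Otherwise $(E'')^\vee \otimes E'$ is the zero sheaf and there is nothing to prove.

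\emph{Step 1: building the endomorphism.} Suppose, for a contradiction, that $h^0((E'')^\vee \otimes E') \neq 0$. Since $H^0((E'')^\vee \otimes E') = Hom(E'',E')$, this gives a nonzero morphism $\varphi : E'' \to E'$. Let $\pi : E \to E''$ be the surjection and $\iota : E' \to E$ the inclusion from the exact sequence, and set
\[
f = \iota \circ \varphi \circ \pi : E \to E .
\]
Because $\pi$ is surjective and $\iota$ is injective, $f \neq 0$ exactly when $\varphi \neq 0$, so $f$ is nonzero. On the other hand, $\pi \circ \iota = 0$, so $f$ vanishes on $E'$. Since $E' \neq 0$, the map $f$ has nontrivial kernel and is not an isomorphism.

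\emph{Step 2: stable bundles are simple.} This is the standard fact I would reprove briefly. Let $g : E \to E$ be any nonzero endomorphism and let $I = Im(g)$. Then $I$ is a nonzero subsheaf of $E$, and it is also a nonzero quotient of $E$.
\begin{itemize}
\item If $g$ is not an isomorphism, then on the curve $g$ is not injective, so $rank(I) < rank(E)$ and $I$ is a proper subsheaf of $E$.
\item Stability of $E$, applied to $I$ as a proper subsheaf, gives $\mu(I) < \mu(E)$.
\item Stability of $E$, applied to $I$ as a proper quotient, gives $\mu(I) > \mu(E)$.
\end{itemize}
These two inequalities contradict each other. Hence every nonzero endomorphism of $E$ is an isomorphism, so $End(E)$ is a finite-dimensional division algebra over $\mathbb{C}$. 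Since $\mathbb{C}$ is algebraically closed, $End(E) = \mathbb{C}\cdot Id$. Concretely, take an eigenvalue $\lambda$ of $g$ on a fibre; then $g - \lambda\, Id$ is not an isomorphism, so it must be zero.

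\emph{Step 3: conclusion.} By Step 2 the map $f$ of Step 1 is a scalar multiple of the identity. Since $f \neq 0$, it is a nonzero scalar and therefore an isomorphism. This contradicts Step 1, so $h^0((E'')^\vee \otimes E') = 0$.

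The only delicate point is the slope argument in Step 2. One must make sure that $\mu(I) > \mu(E)$ is applied to a genuine proper quotient: a torsion-free quotient of strictly smaller rank, which is what $I$ is, being a subsheaf of the locally free sheaf $E$. Everything else is formal.
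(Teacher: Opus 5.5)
Your proof is correct. The paper gives no proof of its own, only the citation to Russo--Teixidor, and your argument is the standard one for this fact: compose a nonzero $E''\to E'$ with $E\to E''$ and $E'\hookrightarrow E$ to get a nonzero endomorphism of $E$ vanishing on $E'$, which contradicts the simplicity of stable bundles.

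You could shorten it: the slope argument in the first half of your Step 2, applied directly to $f$ (which visibly kills $E'\neq 0$), already gives the contradiction. That makes the passage to $End(E)=\mathbb{C}\cdot Id$ unnecessary, and with it your unproved claim that a non-invertible endomorphism of a bundle on a curve is non-injective (true, since an injective one has torsion cokernel of degree $0$).
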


This result allows us to conclude that in the case when $d \leq g-1$ then $H^0 (Q^\vee \otimes S')=0$ and $H^0 (\mvl^\vee \otimes K_C^{-1} L)=0$, by considering the exact sequences presented in diagram \ref{eqivres} and the stability of $F_{V.L}|_C$ in this case. 
\vspace{1mm}

Lets recall that we consider polarized $K3$ surfaces $(X,H)$ that satisfy the next property:
\begin{itemize}
    \item[$(\ast \ast)$] 
    \begin{center}
     $H^2$ divides $H.D$ for all curve classes $D$ on $X$
    \end{center} 
\end{itemize} 

\begin{teo} \label{equivk3surf}
    Let $(X,H)$ be a polarized K3 surface satisfying property $(\ast \ast)$ and consider $C \in |H|$ a curve of genus $g>2$, let $(L,V)$ be a generated linear series of type $(d,r+1)$ over $C$, with $1<r<d \leq \min \{ g-1 , k r \}$ where $k$ is the gonality of $C$. If $(L,V)$ is linearly stable then $\mvl$ is slope-stable.
\end{teo}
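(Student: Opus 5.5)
We argue by contradiction. Suppose $(L,V)$ is linearly stable but $\mvl$ is not slope-stable, and choose a proper subbundle $S\subset\mvl$ of rank $s$ with $\mu(S)\geq\mu(\mvl)$. We may take $S$ of maximal slope and, among those, saturated, so that $Q=\mvl/S$ is a vector bundle. The goal is to show that $S$ is forced to be a syzygy bundle $M_{W,F_S}$ coming from a linear subseries via Butler's diagram. That would contradict linear stability, since linear stability says $\mvl$ is not destabilised by subbundles of the form $M_{W,L'}$.

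\textbf{Step 1: transfer to the surface.} By the preceding Corollary, $F_{V,L}|_C$ is slope-stable, because $d\le g-1$ and $r\ge 2$. By Lemma \ref{lemaTeix} applied to the sequences in diagram (\ref{eqivres}), we get $H^0(Q^\vee\otimes S')=0$ and $H^0(\mvl^\vee\otimes K_C^{-1}L)=0$. The correspondence $S\leftrightarrow S'$ gives
\[
\mu(S')=\frac{\deg S+d-2g+2}{s+1}.
\]
Stability of $F_{V,L}|_C$ gives $\mu(S')<\mu(F_{V,L}|_C)=\frac{-d+d-2g+2}{r+1}\cdot$ (computed from $c_1(F_{V,L}|_C)=-H|_C$, i.e.\ degree $-(2g-2)$). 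This yields an upper bound on $\deg S$ of the form
\[
\deg S<-\frac{(s+1)(2g-2)}{r+1}+2g-2-d.
\]
Comparing this bound with $\deg S\ge -sd/r$ constrains $s$ and $d$. I also plan to use property $(\ast\ast)$ here. Any saturated subsheaf of $F_{V,L}$ has $c_1\cdot H$ divisible by $H^2=2g-2$, so slopes of subsheaves of $F_{V,L}$ on $X$ lie in $\frac{2g-2}{\mathbb Z}$-type lattices. This integrality should be what rules out borderline equal-slope cases.

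\textbf{Step 2: Butler's diagram and Lemma \ref{lemamist}.} Dualising $S\subset\mvl$, I produce as in Theorem \ref{teo 1} a generated bundle $F_S$ and a subspace $W\subset V$ fitting into Butler's diagram with $S=\ker(W\otimes\OO_C\to F_S)$. If $\operatorname{rank}F_S=1$, then $F_S=L(-Z)$ and $S=M_{W,L(-Z)}$ with $\mu(S)\ge\mu(\mvl)$, contradicting linear stability; this is exactly the argument of Proposition \ref{condest}. If $\operatorname{rank}F_S\ge2$, I want the hypotheses of Lemma \ref{lemamist}, namely the sequence $0\to\OO_C^{\operatorname{rank}F_S-1}\to F_S\to\det F_S\to0$ being exact on sections. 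Granting this, $d\le kr$ gives $\mu(S)\le\mu(\mvl)$, with equality only when $k=d/r=\deg\det F_S/(h^0(\det F_S)-1)$ and $W=H^0(F_S)$. Strict inequality is the desired contradiction.

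\textbf{Main obstacle.} The hard part is the equality case $d=kr$ together with establishing the section-exactness required by Lemma \ref{lemamist}. For section-exactness I would use $H^0(Q)=0$, from Proposition \ref{Prop1}(iii) since $S$ is of maximal slope, to obtain $W=H^0(F_S)$ and the sequence via \cite[Proposition 4.1]{mistretta2012linear}. For the equality case I would use the vanishing $H^0(Q^\vee\otimes S')=0$ from Step 1 and the divisibility $(\ast\ast)$. In the equality case $\det F_S$ computes the gonality as a pencil-type series, and $S$ lifts to a subsheaf of $F_{V,L}$ whose slope, by integrality, cannot equal $\mu(F_{V,L})$ unless $S'$ destabilises $F_{V,L}|_C$, contradicting its stability. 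I expect this last compatibility argument to require the most care.
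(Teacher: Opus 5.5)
Your route differs from the paper's, which does not go through Lemma \ref{lemamist} at all. The paper takes $S$ stable of maximal slope and observes $\mu(S)>\mu(F_{V,L}|_C)$, so that $\operatorname{Hom}(S,F_{V,L}|_C)=0$. It then tries to force $H^0(S'\otimes\mvl^\vee)\neq0$ by a Koszul-cohomology and Euler-characteristic count.

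Your rank-one case is fine. The rank $\geq 2$ case has a genuine gap: you never establish the hypothesis of Lemma \ref{lemamist}, namely that $0\to\OO_C^{\,n-1}\to F_S\to\det F_S\to0$ (with $n=\operatorname{rank}F_S$) is exact on global sections.

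The step ``$H^0(Q)=0$ gives $W=H^0(F_S)$'' is exactly what Remark \ref{obs1} says is unavailable for $V\subsetneq H^0(L)$. Proposition \ref{Prop1}(ii) needs surjectivity of $m_W$, and $H^0(Q)=0$ no longer implies it. Already when $F_S=L(-Z)$, the condition $H^0(Q)=0$ only says $W=V\cap H^0(L(-Z))$, which can be a proper subspace of $H^0(F_S)=H^0(L(-Z))$. Even granting $W=H^0(F_S)$, exactness on sections is equivalent to $h^0(\det F_S)=s+1$, since $h^0(F_S)=w$ and $n=w-s$. That is a Brill--Noether statement about $\det F_S=\det S^\vee$ which nothing in your plan supplies; \cite[Proposition 4.1]{mistretta2012linear} only produces the sequence, not its exactness on sections.

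The equality case $d=kr$ is also left open, for two reasons. First, your Step 1 bound carries no information: it is compatible with $\deg S\geq -sd/r$ if and only if $d<\frac{2r}{r+1}(g-1)$, which always holds here. Second, $S'$ is a subsheaf of $F_{V,L}|_C$ on $C$ with no reason to extend to a subsheaf of $F_{V,L}$ on $X$, so $(\ast\ast)$ says nothing about it.

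The missing idea is to spend $(\ast\ast)$ on the Brill--Noether theory of $C$. If $H=D_1+D_2$ with $D_1,D_2$ nonzero effective, then $H^2=H\cdot D_1+H\cdot D_2\geq 2H^2$, which is impossible. So every member of $|H|$ is reduced and irreducible, and by Lazarsfeld's theorem every smooth $C\in|H|$ carries no $g^s_e$ with $\rho(g,s,e)<0$. Consequently $k=\lfloor (g+3)/2\rfloor$, so $kr\geq g+2>d$ and the equality case never occurs.

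Better still, the argument of Proposition \ref{Prop2} then applies verbatim. For a proper $S\subset\mvl$ of rank $s\leq r-1$, the bound $h^0(\det S^\vee)\geq s+1$ forces $\mu(S)\leq -1-\frac{g}{s+1}\leq -1-\frac{g}{r}<-\frac{g-1}{r}\leq\mu(\mvl)$. This gives stability outright, without even using linear stability.

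Do not fall back on the paper's Koszul count either. In its last step, $-(r_S+1)\geq -r$ yields the lower bound $(2g-2)(-r-r^2)+r(r_S+1)$, not $(2g-2)(-r+r^2)+r(r_S+1)$. In fact $\chi_2-(r+1)\chi_1=(r_S+1)d+r(d+d_S)-r(r_S+3)(g-1)$, which is negative for $d\leq g-1$ because $d_S<0$. So that count never produces a nonzero element of $H^0(S'\otimes\mvl^\vee)$.
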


\begin{proof}
    Let $F_{V,L}$ the Lazarsfeld-Mukai bundle associated to $(L,V)$. We recall that with our hyphotesis the restriction to $C$, $F_{V,L}|_C$, is a slope-stable bundle.
    \vspace{1mm}
        Suposse that $\mvl$ is not slope-stable and let $S$ be a slope-stable maximal destabilizing subbundle of $\mvl$. We denote by $s$ and $d_S$ the rank and degree of the bundle $S$, respectively. 

    \vspace{1mm}
    Since $S$ is slope-stable with $\mu (S) \geq \mu (\mvl)$ then $-\frac{d_S}{r_S} \leq \frac{d}{r}$, and $d \leq g-1$. We can compare the slopes $\mu (F_{V,L}|_C)$ and $\mu(S)$, given that $\mu(F_{V,L}|_C) = -\frac{2g-2}{r+1}$,

    \begin{align*}
        \mu (F_{V,L}|_C) - \mu(S) & = -\frac{2g-2}{r+1} - \frac{d_S}{r_S} \\
        & \leq -\frac{2g-2}{r+1} + \frac{d}{r} \\
        & \leq -\frac{2g-2}{r+1} + \frac{g-1}{r} \\
        & = \frac{g-1}{r} - \frac{2}{r+1} (g-1) \\
        & = (g-1) \frac{1}{r(r+1)} (r+1-2r) \\
        & = \frac{g-1}{r(r+1)} (1-r) \\ 
        & < 0.
    \end{align*}

    Thus, we conclude that $\mu (F_{V,L}|_C) < \mu (S)$, which implies that $Hom (S,F_{V,L}|_C)=0$, or equivalently, $H^0 (S^\vee \otimes F_{V,L}|_C)=0$.

    \vspace{2mm}
    Next, using the short exact sequence $0 \rightarrow S' \rightarrow F_{V,L}|_C \rightarrow Q \rightarrow 0$ and that $F_{V,L}|_C$ is slope-stable, from Lemma \ref{lemaTeix} we obtain $H^0 (S' \otimes Q^\vee ) = 0$ and $H^0 (S' \otimes \mvl^\vee) \hookrightarrow H^0 (S' \otimes S^\vee)$, where this space satisfies $H^0 (S' \otimes S^\vee) \hookrightarrow H^0 (F_{V,L}|_C \otimes S^\vee)$.
    \vspace{2mm}
    
    We aim to establish conditions under which the space $H^0 (S' \otimes \mvl^\vee)$ is non-zero.
        By Serre duality, $H^0 (S' \otimes \mvl^\vee) \cong H^1 (\mvl \otimes (S')^\vee \otimes K_C)^\vee$. To compute the dimension of this space we use \cite[Theorem 2.28]{aprodu2010koszul}, for which we aim to compute $H^1 (\mvl \otimes (S')^\vee \otimes K_C \otimes L)$. Additionally, from Serre duality $H^1 (\mvl \otimes (S')^\vee \otimes K_C\otimes L) \cong H^0 (\mvl^\vee \otimes S' \otimes L^{-1})^\vee$.
    \vspace{1cm}

    \begin{claim}
        $H^0 (\mvl^\vee \otimes S' \otimes L^{-1})=0$.
    \end{claim}
    \vspace{1mm}
    
    \begin{claimproof}
        If $H^0 (\mvl^\vee S' \otimes L^{-1}) \neq 0$, then $L \hookrightarrow \mvl^\vee \otimes S'$, or equivalently, $r+1< h^0(L) \leq h^0 (\mvl^\vee \otimes S')$. Now, consider the short exact sequence obtained by twisting the first exact row of diagram \ref{eqivres} by $S^\vee$ 
        \[
        0 \rightarrow K_C^{-1} L \otimes S^\vee \rightarrow S' \otimes S^\vee \rightarrow S \otimes S^\vee \rightarrow 0.
        \]
        Since $S$ is slope-stable and $K_C^{-1} L$ is a line bundle, we have that $K_C^{-1} L \otimes S^\vee$ is slope-stable with degree
        \[
        deg(K_C^{-1} L \otimes S^\vee) = r_S (d+2-2g)-d_S = r_S (d+1-g)+r_S(1-g)-d_S < r_S (1-g)-d_S < 0.
        \]
        Therefore, $h^0 (K_C^{-1} L \otimes S^\vee) = 0$ and $h^0 (S' \otimes S^\vee) \leq h^0 (S \otimes S^\vee)=1$ by stability of $S$.

        \vspace{1mm}
        On the other hand, by dualizing the right exact column of diagram \ref{eqivres} and twisting by $S'$, 
        \[
        0 \rightarrow S' \otimes Q^\vee \rightarrow S' \otimes \mvl^\vee \rightarrow S' \otimes S' \otimes S^\vee \rightarrow 0.
        \]
        Since $F_{V,L}|_C$ is slope-stable, from Lemma \ref{lemaTeix} we get $h^0 (S' \otimes Q^\vee)=0$ and $h^0 (S' \otimes \mvl^\vee) \leq h^0 (S' \otimes S^\vee) \leq 1$, which leads to a contradiction, since we assumed that $h^0 (\mvl^\vee \otimes S') > r+1$. Therefore, we conclude that $h^0 (\mvl^\vee \otimes S' \otimes L^{-1})=0$.
    \end{claimproof}
    \vspace{2mm}

    Since $H^0 (\mvl^\vee \otimes S' \otimes L^{-1})=0$ and $H^1 ( \mvl \otimes (S')^\vee \otimes K_C \otimes L) \cong H^0 (\mvl^\vee \otimes S' \otimes L^{-1})^\vee$, it follows that $H^1 ( \mvl \otimes (S')^\vee \otimes K_C \otimes L) = 0$. From \cite[Theorem 2.28]{aprodu2010koszul}, we conclude that
    \[
    H^0 (S' \otimes \mvl^\vee)^\vee \cong H^1 (\mvl \otimes (S')^\vee \otimes K_C) \cong K_{r-1,2} (C,\mvl \otimes (S')^\vee \otimes K_C, L, V).
    \]
    Set $\mathcal{E}:=\mvl \otimes (S')^\vee \otimes K_C$. The latter space is isomorphic to (see \cite{aprodu2010koszul}):
    \begin{equation}
        \label{coker1}
        \operatorname{coker}\left(
        \bigwedge^r V \otimes H^0(\mathcal{E}\otimes L)
        \longrightarrow
        H^0\left(\mathcal{E}\otimes\bigwedge^{r-1}\mvl\otimes L^2\right)
        \right).
    \end{equation}
    \vspace{2mm}
    
    To compute the dimension of this vector space, we use the fact that $\mvl^\vee \cong \bigwedge^{r-1} \mvl \otimes L$ and apply Riemann-Roch formula for the bundles $\mvl \otimes (S')^\vee \otimes K_C \otimes L$ and $\mvl \otimes (S')^\vee \otimes K_C \otimes  \bigwedge^{r-1} \mvl \otimes L^2$ to compute the Euler characteristic of this bundles, called $\chi_1$ and $\chi_2$ respectively. We get
    \begin{align*}
        \chi_1 & = r(-d-d_S + 2g-2) + (r_S + 1)(-d) + r(r_S +1) (d+g-1) \\ 
        \chi_2 & = r^2 (-d-d_S +2g-2) + r^2(r_S +1)(d+g-1).
    \end{align*}

    To simplify these expressions, we compute the dimensions of the first cohomology space of both vector bundles. Denote by $h_1$ the dimension \[dim_\mathbb{C} H^1 (\mvl \otimes (S')^\vee \otimes K_C \otimes L).\] From Serre duality, $h_1 = h^0 (S' \otimes \mvl^\vee \otimes L^{-1})$ and from our previous claim, we know that $h_1 = 0$ and $\chi_1 = h^0 (\mvl \otimes (S')^\vee \otimes K_C \otimes L)$. 
    For the second vector bundle, let $h_2$ denote the dimension \[dim_\mathbb{C} H^1 (\mvl \otimes (S')^\vee \otimes K_C \otimes  \bigwedge^{r-1} \mvl \otimes L^2).\] From Serre duality, $h_2 = h^0 (S' \otimes \mvl \otimes \mvl^\vee \otimes L^{-1})$. Consider the second exact row of the Butler's diagram of $(L,V)$ by $S$, twisting by $S' \otimes \mvl^\vee \otimes L^{-1}$ and taking cohomology, we obtain the exact sequence
    \[
    0 \rightarrow H^0 (\mvl \otimes S' \otimes \mvl^\vee \otimes L^{-1}) \rightarrow V \otimes H^0(S' \otimes \mvl^\vee \otimes L^{-1}) \rightarrow \cdots
    \]
    Since the second term of this sequence is zero, it follows that $h^0 (S' \otimes \mvl \otimes \mvl^\vee \otimes L^{-1})$ vanishes as well. Thus, we conclude that $h_2 = 0$ and $\chi_2 = h^0 (\mvl \otimes (S')^\vee \otimes K_C \otimes  \bigwedge^{r-1} \mvl \otimes L^2)$.
    Now, the vector space in \ref{coker1} is not zero if $(r+1) \chi_1 < \chi_2$ and this is equivalent to
    \begin{equation}
        \label{ineq1}
        r(r_S +1)d + r(d+d_S - (2g-2)) + (r_S + 1)d - r(r_S + 1)(d+g-1)>0
    \end{equation}
    
    Notice that the expression in the left side of \ref{ineq1} is greater than
    \begin{equation} \label{ineq2}
        r(d+d_S - (2g-2)) + (r_S + 1)d - r(r_S + 1)(d+g-1).
    \end{equation} 
    
    Now, we analize the last expression. Since $S\subset \mvl$ is a slope-stable maximal destabilizing subbundle, then $0\leq d_S + d$, and since we are considering $0<r<d$, we have that expression \ref{ineq2} is greater than
    \begin{equation}
        \label{ineq3}
        -(2g-2)r+r(r_S+1)+r(r_S + 1)(-d-g+1)
    \end{equation}
    Besides that, since we are considering $d \leq g-1$, we know that $-d \geq -(g-1)$ and the expression in \ref{ineq3} is greater or equal than
    \begin{equation}
        \label{ineq4}
        -(2g-2)r + r(r_S+1) + r(r_S + 1)(2-2g) = (2g-2)(-r-r(r_S + 1)) + r(r_S + 1)
    \end{equation}

    Since $r_S+1 \leq r$ then $-(r_S + 1) \geq -r$ and the expression \ref{ineq4} is greater or equal than
    \begin{equation} \label{ineq5}
        (2g-2)(-r+r^2)+r(r_S+1)
    \end{equation}
    From the hypothesis of $r \geq r_S+1>1$ and $g>2$ the expression \ref{ineq5} is always positive. Then $h^0 (S' \otimes \mvl^\vee) \neq 0$ and this implies that $h^0 (S^\vee \otimes F_{V,L}|_C) \neq 0$ which is a contradiction, then $\mvl$ has to be slope-stable.
\end{proof}

In conclusion, Theorem \ref{equivk3surf} establishes a positive answer to the Mistretta-Stoppino conjecture for generated linear series $(L,V)$ on smooth curves on $K3$ surfaces for $d\leq\text{min}\{g-1, k r\}$, that is, under these conditions we have that linear stability of a pair $(L,V)$ is equivalent to the slope-stability of the associated syzygy bundle $\mvl$. We highlight that this proposition does not contradict the counterexample presented by Castorena, Mistretta, and Torres-López in \cite[Theorem 4.7]{castorena2022linear}. Their counterexample involves a plane curve of degree 7, which lies outside the framework of the Martens theorem established in \cite[Theorem 3.1]{martens2006curves}. This theorem asserts that a complex projective K3 surface cannot contain a curve isomorphic to a smooth plane curve of degree $\geq 7$. Consequently, the counterexample provided by Castorena, Mistretta and Torres-López, which relies on a plane curve of degree 7, does not apply to the context considered in Theorem \ref{equivk3surf}. This distinction underscores the significance of the specific hypotheses and geometric contexts in which positive or negative results relating linear stability of linear series to slope stability of syzygy bundles can be derived.
\vspace{2mm}

\textbf{Final Remark.} As the reader can check, if the restriction $F_{V,L}|_C$ is slope-stable for $g-1 \leq d \leq 2g-1$, then the proof of Proposition \ref{equivk3surf} can be adapted modifying some numerical inequalities to get the slope-stability of $\mvl$.

\phantomsection
\addcontentsline{toc}{chapter}{Bibliography}

\bibliographystyle{plainurl}
\bibliography{phDThesis}

\end{document}